\newtheorem{theorem}{Theorem}[section]
\newaliascnt{lem}{theorem}
\newtheorem{lemma}[lem]{Lemma}
\newaliascnt{ass}{theorem}
\newaliascnt{prop}{theorem}
\newtheorem{Prop}[prop]{Proposition}
\newaliascnt{cor}{theorem}
\newaliascnt{defi}{theorem}
\newtheorem{defi}[defi]{Definition}
\theoremstyle{definition}
\newaliascnt{ex}{theorem}
\newaliascnt{rem}{theorem}
\newtheorem{Rem}[rem]{Remark}
\newcommand*{\R}{\mathbb{R}}                                            
\newcommand*{\N}{\mathbb{N}}                                            
\newcommand*{\Z}{\mathbb{Z}}                                            
\newcommand*{\J}{\mathcal{J}}                                         	
\newcommand*{\I}{\mathcal{I}}                                         	
\newcommand*{\E}{\mathcal{E}}                                         	
\newcommand*{\D}{\mathcal{D}}                                         	
\newcommand*{\K}{\mathcal{K}} 	
\renewcommand*{\P}{\mathcal{P}} 	
\renewcommand{\d}{\,\mathrm{d}}											
\newcommand*{\abs}[1]{\left| #1 \right|}                                
\newcommand*{\norm}[1]{\left\| #1 \right\|}                             
\newcommand*{\distr}[2]{\left\langle #1, #2 \right\rangle}              
\newcommand*{\sep}{\; \vrule \;}                                        
\newcommand{\esssup}{ \mathop{\text{ess-sup}}\limits }
\newcommand*{\floor}[1]{\left\lfloor #1 \right\rfloor}                  
\newcommand*{\link}[1]{(\ref{#1})}                                      
\newcommand*{\loc}{\mathrm{loc}}
\newcommand*{\dist}{\mathrm{dist}}
\newcommand*{\diam}{\mathrm{diam}}
\renewcommand{\tilde}{\widetilde}
\DeclareMathOperator{\supp}{supp}
\renewcommand{\div}[1]{\mathrm{div}\!\left(#1\right)}
\title{Besov regularity of solutions to the $p$-Poisson equation
\footnote{This work has been supported by Deutsche Forschungsgemeinschaft DFG (DA 360/18-1, DA 360/19-1)
and European Research Council ERC (Starting Grant HDSP-CONTR-306274).}}
\author{Stephan Dahlke \and Lars Diening \and Christoph Hartmann\footnote{Corresponding author.} \and Benjamin Scharf \and Markus Weimar}
\date{\today}
\begin{document}

\maketitle

\begin{abstract}
\noindent In this paper, we study  the regularity of solutions to the $p$-Poisson equation for all $1<p<\infty$.  
In particular, we are interested in  smoothness estimates in the adaptivity scale $ B^\sigma_{\tau}(L_{\tau}(\Omega))$, $1/\tau = \sigma/d+1/p$, of Besov spaces.  
The regularity in this scale determines the order of approximation that can be achieved by adaptive and other nonlinear approximation methods.
It turns out that, especially for solutions to $p$-Poisson equations with homogeneous Dirichlet boundary conditions on bounded polygonal domains, the Besov regularity is significantly higher than the Sobolev regularity which justifies the use of adaptive algorithms. 
This type of results is obtained by combining local H\"older with global Sobolev estimates. 
In particular, we prove that intersections of locally weighted H\"older spaces and Sobolev spaces can be continuously embedded into the specific scale of Besov spaces we are interested in. 
The proof of this embedding result is based on wavelet characterizations of Besov spaces.

\smallskip
\noindent \textbf{Keywords:} $p$-Poisson equation, regularity of solutions,  H\"older spaces, Besov spaces,  nonlinear and adaptive approximation, wavelets.

\smallskip
\noindent \textbf{Subject Classification:} 
35B35, 
35J92, 
41A25, 
41A46, 
46E35, 
65M99, 
65T60. 
\end{abstract}


\section{Introduction}
This paper is concerned with regularity estimates of the solutions to 
the \emph{$p$-Poisson equation}
\begin{align}
-\div{\lvert \nabla u \rvert^{p-2} \nabla u} = f \qquad \text{in} \quad \Omega, \label{eq:p-Poisson}
\end{align}
where $ 1 < p < \infty$ and $\Omega \subset \mathbb{R}^d$ denotes some bounded Lipschitz domain. The corresponding variational formulation is given by
\begin{align}
	\int_{\Omega} \distr{\abs{\nabla u }^{p-2} \nabla u}{\nabla v} \d x 
	= \int_{\Omega} f \,v \d x 
	\qquad \text{for all} \quad v \in C_0^{\infty}(\Omega).
	\label{eq:p-Poisson_variational}
\end{align}

Problems of this type arise in many applications, e.g., in non-Newtonian 
fluid theory, non-Newtonian filtering, turbulent flows of a gas in 
porous media, rheology, radiation of heat and many others. Moreover, 
the $p$-Laplacian has a similar model character for nonlinear problems as 
the ordinary Laplace equation for linear problems.
We refer to \cite{Lin2006} for an introduction. By now, many results 
concerning existence and uniqueness of solution are known, we refer again to \cite{Lin2006} and the references therein. However, in many cases, the 
concrete shape of the solutions is unknown, so that efficient numerical 
schemes for the constructive approximation are needed.  
In practice, e.g., for problems in three and more space dimensions, 
this might lead to systems with hundreds of thousands or even millions of unknown. 
Therefore, a quite natural idea would be to use {\em adaptive} 
strategies to increase efficiency. Essentially, an adaptive algorithm 
is an updating strategy where additional degrees of freedom are only 
spent in regions where the numerical approximation is still ``far away''  
from the exact solution. Nevertheless, although the idea of adaptivity is quite 
convincing, these schemes are hard to analyze and to implement, so 
that some theoretical foundations that justify the use of adaptive 
strategies are highly desirable.

The analysis in this paper is motivated by this problem, in particular 
in connection with adaptive wavelet algorithms. In the wavelet case, 
there is a natural benchmark scheme for adaptivity, and that is best 
$n$-term wavelet approximation. In best $n$-term approximation, one does 
not approximate by linear spaces but by nonlinear manifolds ${\mathcal 
M}_n$, consisting of functions of the form
\begin{equation} \label{nonlinearmanifold}
	S=\sum_{\lambda \in \Lambda} c_{\lambda} \psi_{\lambda},
\end{equation} 
where $\{\psi_{\lambda} \sep \lambda \in {\J}\}$ denotes a given 
wavelet basis and $\Lambda\subset\J$ with $\#\Lambda=n$. We refer to \autoref{sec:spaces} and to the textbooks \cite{Dau1992, 
Mey1992, Woj1997} for further information concerning the construction and 
the basic properties of wavelets. In the wavelet setting, a best 
$n$-term approximation can be realized by extracting the $n$ biggest 
wavelet coefficients from the wavelet expansion of the (unknown) function one wants to approximate.  
Clearly, on the one hand, such a scheme can never be realized numerically, because this would 
require to compute all wavelet coefficients and to select the $n$ 
biggest. On the other hand, the best we can expect for an adaptive 
wavelet algorithm would be that it (asymptotically) realizes the 
approximation order of the best $n$-term approximation. In this sense, 
the use of adaptive schemes is justified if best $n$-term wavelet 
approximation realizes a significantly higher convergence order when 
compared to more conventional, uniform approximation schemes. In the 
wavelet setting, it is known that the convergence order of uniform 
schemes with respect to $L_p$  depends on the regularity of the object 
one wants to approximate in the scale $W^s(L_p(\Omega))$ of 
$L_p$-Sobolev spaces, whereas the order of best $n$-term wavelet 
approximation in $L_p$ depends on the regularity in the {\em 
adaptivity scale} $B^{\sigma}_{\tau}(L_{\tau}(\Omega)), 1/\tau=\sigma/d + 1/p$, of 
Besov spaces. We refer to \cite{DahDahDev1997, Dev1998, Han2012} for 
further information. Therefore, the use of adaptive (wavelet) 
algorithms for (\ref{eq:p-Poisson}) would be justified if the Besov smoothness $\sigma$ of the solution in the adaptivity scale of Besov spaces is higher than its 
Sobolev regularity $s$.

For linear second order elliptic equations, a lot of positive results in this direction already exist; see, e.g., \cite{Dah1999c, DahDev1997, DahSic2009}.
In contrast, it seems that not too much is known for nonlinear equations.
The only contribution we are aware of is the paper \cite{DahSic2013} which is concerned with semilinear equations.
In the present paper, we show a first positive result for quasilinear elliptic equations, i.e., for the $p$-Poisson equation \link{eq:p-Poisson}. 
Results of Savar{\'e} \cite{Sav1998} indicate that, on general Lipschitz domains, the 
Sobolev smoothness of the solutions to (\ref{eq:p-Poisson}) is given by  
$s^*=1 + 1/p$ if $2\leq p < \infty$, and by $s^*=3/2$ if $1 < p < 2$.
However, under certain conditions, the solutions 
possess higher regularity away from the boundary, in the sense that they are locally H\"older 
continuous; see, e.g., \cite{DiB1983,Eva1982,Tol1984,Uhl1977,Ura1968}.
The local H\"older semi-norms may explode as one approaches the 
boundary, but this singular behaviour can be controlled by some power of 
the distance to the boundary as shown, e.g., in \cite{DieKapSch2012,KuuMin2014,Lew1983,LinLin2013}. We refer to Section 4 for a detailed exposition.
 (Properties like this very often hold in 
the context of elliptic boundary problems on nonsmooth domains, we refer, 
e.g., to \cite{MazRos2003} and the references therein for details). It turns out that   the 
combination of the global Sobolev smoothness and the local H\"older regularity  
can be used to establish Besov smoothness for the solutions to~\link{eq:p-Poisson}. In many cases, the Besov 
smoothness $\sigma$ is much higher than the Sobolev smoothness $s^*=1+1/p$ or $s^*=3/2$ respectively, so that the use of adaptive schemes is completely justified.

We state our findings in two steps. First of all, we prove a general embedding theorem 
which says that the intersection of a classical Sobolev space and a H\"older space with the properties outlined above
can be embedded into Besov spaces in the adaptivity scale $1/\tau=\sigma/d + 1/p$. It turns out that for a large range of parameters, the Besov smoothness 
is significantly higher compared to the Sobolev smoothness. The proof of this embedding theorem is performed by exploiting the characterizations of
Besov spaces by means of wavelet expansion coefficients. 
Then we verify that under certain natural conditions the solutions to (\ref{eq:p-Poisson}) indeed  satisfy the
assumptions of the embedding theorem, so that its application yields the desired result. 

This paper is organized as follows: In \autoref{sec:spaces}, we introduce all the function spaces that will be used in the paper, including their wavelet characterizations, if possible.
Afterwards, in \autoref{sec:embedding} and \autoref{sec:Besov_reg}, we state and prove our main results: Our general embedding (\autoref{theorem:embedding}) can be found in \autoref{sec:embedding}. 
Its application to the case of the solutions to (\ref{eq:p-Poisson}) which yields new, generic  Besov regularity results (see \autoref{thm:generic_besov} and  \autoref{thm:besov_reg_2d_lip}) is performed in \autoref{subsec:p-Poisson} and \ref{subsec:p-Poisson_d=2}, respectively.
Moreover, here we give explicit bounds on the Besov regularity of the unique solution to the $p$-Poisson equation with homogeneous Dirichlet boundary conditions in two dimensions; see \autoref{thm:besov_reg_2d_lip_max} and \autoref{theorem:Besov_reg_2d}.
The paper is concluded with an Appendix (\autoref{sec:App}) which contains a couple of auxiliary lemmata and propositions which are needed in our proofs. \\  

\noindent\textbf{Notation:} For families $\{a_{\J}\}_{\J}$ and $\{b_{\J}\}_{\J}$ of non-negative real numbers over a common index set we write $a_{\J} \lesssim b_{\J}$ if there exists a constant $c>0$ (independent of the context-dependent parameters $\J$) such that
\begin{equation*}
	a_{\J} \leq c\cdot b_{\J}
\end{equation*}
holds uniformly in $\J$.
Consequently, $a_{\J} \sim b_{\J}$ means $a_{\J} \lesssim b_{\J}$ and $b_{\J} \lesssim a_{\J}$.


\section{Function spaces and wavelet decompositions} \label{sec:spaces}
In this section we recall the definitions of several types of function spaces that will be needed in the sequel. Moreover, we collect some well-known assertions such as, e.g., the characterization of Besov spaces in terms of wavelet coefficients.

\subsection{Strongly differentiable functions: (weighted) H\"older spaces}\label{subsec:Hoelder}
Let $\Omega\subset\R^d$ be some bounded domain, i.e., an open and connected set.
Then, for $\ell\in\N_0$, $C^\ell(\Omega)$ furnished with the norm
\begin{equation*}
	\norm{g \sep C^\ell(\Omega)}
	= \sum_{\abs{\nu}\leq \ell} \sup_{x\in\Omega} \abs{\partial^{\nu}g(x)}
\end{equation*}
denotes the space of all real-valued functions $g$ on $\Omega$ such that $\partial^{\nu}g$ is uniformly continuous and bounded on $\Omega$ for every multi-index $\nu=(\nu_1,\ldots,\nu_d)\in\N_0^d$ with $0\leq \abs{\nu}\leq \ell$. Therein $\partial^\nu=\partial^{\abs{\nu}}/(\partial x_1^{\nu_1}\ldots\partial x_d^{\nu_d})$ denote the $\nu$-th order strong derivatives. 
If $K$ is a compact subset of~$\Omega$ (denoted by $K\subset\subset\Omega$), the spaces $C^\ell(K)$ are defined likewise.
Unless otherwise stated we restrict ourselves to those $K\subset\subset\Omega$ which can be described as the closure of some open and simply connected set.
Next let us recall that for $g\in C^\ell(\Omega)$ the $\ell$-th order H\"older semi-norm with exponent $0<\alpha\leq 1$ is given by
\begin{equation}\label{def:hoelder_seminorm}
	\abs{g}_{C^{\ell,\alpha}(\Omega)} 
	= \sum_{\abs{\nu}=\ell} \sup_{\substack{x,y\in \Omega,\\x\neq y}} \frac{\abs{\partial^{\nu}g(x)-\partial^{\nu}g(y)}}{\abs{x-y}^\alpha}.
\end{equation}
Consequently, for $\ell\in\N_0$ and $0<\alpha\leq 1$,
\begin{equation*}
	C^{\ell,\alpha}(\Omega) = \left\{ g \in C^\ell(\Omega) \sep \norm{g \sep C^{\ell,\alpha}(\Omega)} = \norm{g \sep C^\ell(\Omega)} + \abs{g}_{C^{\ell,\alpha}(\Omega)} < \infty \right\},
\end{equation*}
denote the (classical) \emph{H\"older spaces} on $\Omega$. 
Again we can replace $\Omega$ by $K$ at every occurrence to define the H\"older spaces also for compact subsets $K\subset\subset \Omega$.
Standard proofs yield that all the spaces we defined so far are actually Banach spaces; see, e.g., \cite{Dob2010, Kry1996}. 

Furthermore, let us introduce the collection of 
all functions on $\Omega$ which are  locally H\"older continuous (of order $\ell\in\N_0$ with exponent $0<\alpha\leq 1$). This set will be denoted by 
\begin{align*}
	C^{\ell,\alpha}_{\loc}(\Omega) 
	= \left\{g \colon \Omega \rightarrow \R \sep  g \in C^{\ell,\alpha}(K) \text{ for all } K \subset\subset \Omega \right\},
\end{align*}
where we simplified the notation by denoting the restrictions $g\big|_{K}$ of functions $g$ from $\Omega$ to compact subsets $K$ by $g$ again.
Since the latter collection of functions does not perfectly fit for our purposes, in the sequel the following closely related (non-standard) function spaces will be used instead.
Let $\K$ denote an arbitrary but non-trivial family of compact subsets $K\subset\subset\Omega$.
Then for every $K\in\K$ the quantity
\begin{equation}\label{def:delta}
	\delta_K 
	= \dist ( K, \partial\Omega ),
\end{equation}
i.e., the distance of $K$ to the boundary of $\Omega$, is strictly positive.
Thus, for each $\ell\in\N_0$, all $0<\alpha\leq 1$, and every $\gamma>0$, the space
\begin{align*}
	&C^{\ell,\alpha}_{\gamma,\loc}(\Omega;\K) \\
	&\qquad = \left\{ g \colon \Omega \rightarrow \R \sep g\in C^{\ell,\alpha}(K) \; \text{ for all } \; K\in\K \; \text{ and } \; \abs{g}_{C^{\ell,\alpha}_{\gamma,\loc}} = \sup_{K\in\K} \delta_K^\gamma \abs{g}_{C^{\ell,\alpha}(K)} < \infty \right\}
\end{align*}
is well-defined and it is easily verified that $\abs{\cdot}_{C^{\ell,\alpha}_{\gamma,\loc}}$ provides a semi-norm for this space. 
In our applications below $\K(c)$ will be the set of all closed balls $B=B_r(x_0)\subset\Omega$ (with center $x_0\in\Omega$ and radius $r>0$) such that the (open) ball $\mathring{B}_{c\,r}=\mathring{B}_{c\,r}(x_0)$ is still contained in $\Omega$. Here $c>1$ denotes a constant which we assume to be given fixed in advance. 
Actually, it is not hard to see that the space $C^{\ell,\alpha}_{\gamma,\loc}(\Omega;\K(c))$ is independent of $c$. 
Consequently, we simply write $C^{\ell,\alpha}_{\gamma,\loc}(\Omega)=C^{\ell,\alpha}_{\gamma,\loc}(\Omega;\K)$ for $\K=\K(c)$. Those spaces are then referred to as \emph{locally weighted H\"older spaces}.

\begin{Rem} \label{Rem:locally_weighted_hoelder_spaces}
Obviously, for every choice of the parameters, $C^{\ell,\alpha}_{\gamma,\loc}(\Omega)$ contains $C^{\ell,\alpha}(\Omega)$ as a linear subspace, but it also contains functions $g$ whose local H\"older semi-norms $\abs{g}_{C^{\ell,\alpha}(K)}$ grow to infinity as the distance $\delta_K$ of $K\subset\subset \Omega$ to the boundary tends to zero. However, this possible blow-up is controlled by the parameter $\gamma$.
Moreover, in the Appendix we show that the intersection of $C^{\ell,\alpha}_{\gamma,\loc}(\Omega)$ with some Besov space is a Banach space with respect to the canonical norm; see \autoref{Prop:intersect}. Finally, we want to mention that the spaces $C^{\ell,\alpha}_{\gamma,\loc}(\Omega)$ are monotone in $\gamma$, meaning that $C^{\ell,\alpha}_{\gamma,\loc}(\Omega) \subseteq C^{\ell,\alpha}_{\mu,\loc}(\Omega)$ for $\gamma \leq \mu$. This can be seen by checking that $\delta_K^{\mu} = (\delta_K/C)^{\mu} C^{\mu} \leq (\delta_K/C)^{\gamma} C^{\mu} = \delta_K^{\gamma} C^{\mu - \gamma}$ for some universal constant $C\geq 1$ (e.g., $C = \max\{1,\diam(\Omega)\}$), thus $\abs{\cdot}_{C^{\ell,\alpha}_{\mu,\loc}} \leq \abs{\cdot}_{C^{\ell,\alpha}_{\gamma,\loc}}$.
\end{Rem}

For the sake of completeness, we mention here that (as usual) the set of all infinitely often (strongly) differentiable functions with compact support in $\Omega$ will be denoted by $C_0^\infty(\Omega)$ or $\D(\Omega)$. For its dual space we write $\D'(\Omega)$.
Once more, these definitions apply likewise when $\Omega$ is replaced by some compact set $K$.

\subsection{Weakly differentiable functions: Sobolev spaces}
Assume $\Omega\subseteq\R^d$ to be either $\R^d$ itself, or some bounded domain. 
Given $0<p\leq\infty$ the \emph{Lebesgue spaces} $L_p(\Omega)$ consist of all (equivalence classes of real-valued) measurable functions~$g$ on $\Omega$ for which the (quasi-)norm
\begin{equation*}
	\norm{g \sep L_p(\Omega)}
	= \begin{cases}
		\left( \displaystyle\int_\Omega \abs{g(x)}^p \d x \right)^{1/p} & \text{if } p<\infty,\\
		\esssup_{x\in\Omega} \abs{g(x)} & \text{if } p=\infty
	\end{cases}
\end{equation*}
is finite.

Moreover, for $1 \leq p < \infty$ and $\ell\in\N_0$, let 
\begin{equation*}
	W^\ell(L_p(\Omega))
	= \left\{ g \in L_p(\Omega) \sep \norm{g \sep W^\ell(L_p(\Omega))} = \sum_{\abs{\nu} \leq \ell} \norm{D^\nu g \sep L_p(\Omega)}<\infty \right\}
\end{equation*}
denote the classical \emph{Sobolev spaces} on $\Omega$,
where $D^\nu$ are the weak partial derivatives of order 
$\nu\in\N_0^d$.
For fractional smoothness parameters $s=\ell+\beta>0$ (with $\ell\in\N_0$ and $0<\beta<1$) we extend the definition in the usual way by setting
\begin{equation*}
	W^s(L_p(\Omega))
	= \left\{ g \in W^\ell(L_p(\Omega)) \sep \norm{g \sep W^s(L_p(\Omega))} <\infty \right\},
\end{equation*}
where here the norm is given by $\norm{g \sep W^s(L_p(\Omega))} = \norm{g \sep W^\ell(L_p(\Omega))} + \abs{g}_{W^s(L_p(\Omega))}$ and
\begin{equation*}
	\abs{g}_{W^s(L_p(\Omega))} 
	= \left( \sum_{\abs{\nu}=\ell} \int_\Omega \int_\Omega \frac{\abs{D^\nu g(x) - D^\nu g(y)}^p}{\abs{x-y}^{d+\beta\, p}} \d x\d y \right)^{1/p}
\end{equation*}
denotes the common Sobolev semi-norm on $\Omega$. 

Furthermore, for $s > 0$ and $1<p<\infty$, let us denote the closure of $C_0^\infty(\Omega)$ in the norm of $W^s(L_p(\Omega))$ by $W^s_0(L_p(\Omega))$. 
Then we define $W^{-s}(L_{p'}(\Omega))$ to be the dual space of $W^s_0(L_{p}(\Omega))$, where $p'$ is determined by the relation $1/p + 1/p' = 1$. 

For a detailed discussion of the scale of Banach spaces $W^s(L_p(\Omega))$, $s\in\R$, we refer to standard textbooks such as \cite{Ada1975, Tri1983} and the references given therein.

\subsection{Generalized smoothness: Besov spaces}\label{sec:besov_def}
A more advanced way to measure the smoothness of functions is provided by the framework of Besov spaces which essentially generalizes the concept of Sobolev spaces introduced above. 
Besov spaces can be defined in various ways 
which (for a large range of the parameters involved) lead to equivalent descriptions; cf.~\cite{BerLof1976,DahNovSic2006b,Tri1983,Tri2006}.
For our purposes the following approach based on iterated differences seems to be the most reasonable one, since it provides an entirely \emph{intrinsic} definition when dealing with Lipschitz domains (i.e., domains which possess a  Lipschitz boundary; cf.\ \cite[Def.~1.103]{Tri2006}). We refer, e.g., to \cite{Coh2003,Dev1998,DevPop1988,DevSha1984,DevSha1993}. 

In the following let $\Omega\subseteq\R^d$ be either $\R^d$ itself, or some bounded Lipschitz domain. Moreover, let $r\in\N$ and $h \in \R^d$. Then $\Omega_{r,h}$ denotes the set of all $x \in \Omega$ such that the line segment $[x,x+rh]$ belongs to $\Omega$. 
Moreover, for functions $g$ on $\Omega$ the \emph{iterated difference} of order $r$ with step size $h$ is recursively given by
\begin{equation*}
	\Delta^1_h(g,x) = g(x+h)-g(x)
	\qquad \text{and} \qquad
	\Delta_h^r(g,x) = \Delta_h^1(\Delta_h^{r-1}(g,\cdot),x), 
	\quad r\geq 2,
\end{equation*}
for every $x\in\Omega_{r,h}$.
It is easily verified that 
\begin{equation*}
	\Delta^r_h(g,x) = \sum^r_{k=0} (-1)^{r-k} \, {r \choose k} \, g(x + kh)
	\qquad \text{for all} \quad r\in\N, \; h\in\R^d, \; x\in \Omega_{r,h}.
\end{equation*}
Those differences can be used to quantify smoothness: For $0<p\leq \infty$ and every $g\in L_p(\Omega)$ let
\begin{equation*}
	\omega_r(g,t,\Omega)_p
	= \sup_{h\in\R^d, \abs{h}\leq t} \norm{ \Delta^r_h(g,\cdot) \sep L_p(\Omega_{r,h})}, 
	\qquad t > 0,
\end{equation*}
denote the \emph{modulus of smoothness} of order $r$.
It is well-known that $\omega_r(g,t,\Omega)_p \rightarrow 0$ monotonically as $t$ tends to zero and the faster this convergence the smoother is $g$.

Now let $s=\ell+\beta > 0$ with $\ell\in\N_0$ and $0\leq \beta <1$. Then, for $0 < p,q \leq \infty$, the \emph{Besov space} $B^{s}_q(L_p(\Omega))$ is defined as the collection of all $g \in L_p(\Omega)$ for which the semi-norm
\begin{equation}\label{def:Besov_semi}
	\abs{g}_{B^{s}_q(L_p(\Omega))} 
	= \begin{cases}
		\left( \displaystyle\int_0^\infty \left[t^{-s} \, \omega_r(g,t,\Omega)_p \right]^q \frac{\d t}{t}\right)^{1/q} &\text{if } q<\infty,\\
		\sup_{t>0}\limits \, t^{-s} \, \omega_r(g,t,\Omega)_p &\text{if } q=\infty,
	\end{cases}
\end{equation}
with $r\geq \ell+1$ is finite. Endowed with the canonical (quasi-)norm
\begin{equation*}
	\norm{g \sep B^{s}_q(L_p(\Omega))}
	= \norm{g \sep L_p(\Omega)} + \abs{g}_{B^{s}_q(L_p(\Omega))} 
\end{equation*}
these spaces turn out to be quasi-Banach spaces (and Banach spaces if $\min\{p,q\}\geq 1$).
Roughly speaking, with $\norm{g \sep B^{s}_q(L_p(\Omega))}$ we can control all (weak) partial derivatives $D^\nu g$ up to the order $s$, measured in $L_p(\Omega)$. Since the influence of the additional \emph{fine index} $q$ is neglectable for many applications, we will mainly focus on the \emph{smoothness parameter} $s$, as well as on the \emph{integrability index} $p$, and simply set $q=p$ in what follows.

\begin{Rem}\label{rem:Besov_prop}
Some comments are in order:
\begin{itemize}
	\item[(i)] We note that different choices of $r\geq \floor{s}+1$ in \link{def:Besov_semi} lead to equivalent (quasi-)norms. The same is true when we restrict the range for $t$ in \link{def:Besov_semi} to the interval $(0,1)$.
	\item[(ii)] The scale of Besov spaces as defined above is well-studied. In particular, sharp assertions on embeddings, interpolation and duality properties, characterizations in terms of various building blocks (e.g., atoms, local means, quarks, or wavelets) and best $n$-term approximation results are known; see, e.g., \cite{DahNovSic2006b,Dev1998,DevSha1993,HanSic2011}. Many of them can also be shown using the Fourier analytic definition of $B^s_q(L_p(\Omega))$ as spaces of (restrictions of) tempered distributions \cite{FraJaw1990,Tri1983,Tri2006}. It is known \cite{Dis2003,Schn2011,Tri2006} that both definitions coincide in the sense of equivalent (quasi-)norms if
\begin{equation}\label{def:sigma_p}
	s > \sigma_p = d \cdot \max\!\left\{\frac{1}{p}-1,0\right\}.
\end{equation}
	\item[(iii)] The demarcation line for embeddings of Besov spaces into $L_p(\Omega)$, $1<p<\infty$, is given by
\begin{equation}\label{eq:tau_scale}
	\frac{1}{\tau} = \frac{\sigma}{d} + \frac{1}{p}.
\end{equation}
Every Besov space with smoothness and integrability indices corresponding to a point
above that line is continuously embedded into $L_p(\Omega)$ (regardless of the fine index $q$). The points below this line never embed into $L_p(\Omega)$. For spaces $B^\sigma_q(L_\tau(\Omega))$ with $(\sigma,\tau)$ that satisfy \link{eq:tau_scale} some care is needed. However, if $q=\tau$, then the embedding still holds.
	Observe that \link{eq:tau_scale} exactly coincides with the adaptivity scale of Besov spaces we are interested in.
	\item[(iv)] Besov spaces are closely related to Sobolev spaces. Indeed, it has been shown that for bounded Lipschitz domains $\Omega$, $1\leq p <\infty$, and $0<s\notin\N$ the space $B^s_p(L_p(\Omega))$ coincides with $W^s(L_p(\Omega))$ in the sense of equivalent norms; see, e.g., \cite[Theorem~6.7]{DevSha1993}. 
	Using the fact that $X^s(L_p(\Omega)) \hookrightarrow X^{s-\varepsilon}(L_p(\Omega))$ for $X\in\{B_p,W\}$ and arbitrary small $\varepsilon>0$ we thus have
	\begin{equation*}
		W^{s+\varepsilon}(L_p(\Omega)) \hookrightarrow B_p^s(L_p(\Omega)) \hookrightarrow W^{s-\varepsilon}(L_p(\Omega))
	\end{equation*}
	for all $1\leq p < \infty$ and every $s>\varepsilon>0$.
	\item[(v)] For every bounded Lipschitz domain $\Omega\subset\R^d$ there exists a linear extension operator 
\begin{equation*}
	\E_\Omega \colon B^s_q(L_p(\Omega)) \rightarrow B^s_q(L_p(\R^d))
\end{equation*}
which is simultaneously bounded for all parameters that satisfy \link{def:sigma_p}; cf.\ \cite{Ryc1999}.
Moreover, $\E_\Omega$ is local in the sense that $\supp (\E_\Omega u)$ is contained in some bounded neighborhood of~$\Omega$; see \cite{DahNovSic2006b}.
\end{itemize}
\end{Rem}

\subsection{Wavelet characterization of Besov spaces}\label{subsec:wavelet_char}
Under suitable conditions on the parameters involved it is possible to characterize Besov spaces by means of wavelet decompositions \cite{Dau1992,HedNet2007,Mey1992,Tri2006}. 
These characterizations are one of the most important ingredients of wavelet analysis. 
In particular, they provide the basis for several numerical applications such as preconditioning and the design of adaptive algorithms. We refer to \cite{Coh2003, CohDahDev2001, DahDahDev1997} for details.
Moreover, the resulting (quasi-)norm equivalences provide a powerful tool which allows to prove continuous embeddings such as the one stated in \autoref{theorem:embedding} in \autoref{sec:embedding} below.

To start with, we recall some basic assertions related to expansions w.r.t.\ Daubechies wavelets. We essentially follow the lines of \cite{DahDev1997}: Let $\{D_m \sep m \in \N\}$ denote the univariate family of compactly supported Daubechies wavelets \cite{Dau1988,Dau1992}. 
We remind the reader that $D_m$ has~$m$ vanishing moments and the smoothness of these functions increases without bound as $m$ tends to infinity. 
So, let us fix an arbitrary value of $m$ and let $\psi^0 = \phi_m$ denote the univariate scaling function which generates the wavelet $\psi^1 = D_m$. Furthermore, by $E$ we denote the non-zero vertices of the unit cube $[0,1]^d$. Then, in dimension $d$, the set 
\begin{equation*}
	\Psi = \Psi(d) = \left\{\psi^e = \bigotimes_{n=1}^d \psi^{e_n} \sep e=(e_1,\ldots,e_d)\in E\right\}
\end{equation*}
of $2^d - 1$ (tensor product) functions generates (by shifts and dilates) an orthonormal wavelet basis for $L_2(\mathbb{R}^d)$ as follows: If
\begin{equation*}
	\I = \I(\mathbb{R}^d) = \left\{I_{j,k} = 2^{-j} k + 2^{-j} [0,1]^d \, \sep \, k \in \Z^d, j\in\Z \right\}
\end{equation*}
denotes the set of all dyadic intervals in $\R^d$, then the basis consists of all functions of the form
\begin{align}\label{wavelet_basis}
	\eta_I = \eta_{j,k} = 2^{j\,d/2} \, \eta(2^j \cdot -k) 
	\qquad \text{with} \quad 
	I = I_{j,k} \in \I, 
	\quad k \in \Z^d, 
	\quad j \in \Z, 
	\quad \text{and} \quad 
	\eta \in \Psi.
\end{align}
In view of our application below, we remark that there exists some open cube $Q\subset\R^d$, centered at the origin with sides parallel to the coordinate axes, such that $\supp(\eta) \subset Q$ for all $\eta\in\Psi$. Accordingly, all basis functions \link{wavelet_basis} satisfy $\supp(\eta_I) \subset Q(I) = 2^{-j}k + 2^{-j}Q$, where
\begin{equation}\label{def:QI}
	\abs{Q(I)} \sim \abs{I} = 2^{-j \, d}
	\qquad \text{and} \qquad
	Q(I) \subset B(I)=B_{2^{-(j+1)}\diam(Q)}(2^{-j}k), \quad I=I_{j,k}\in\I.
\end{equation}
For every $1 < q < \infty$ the system defined in \link{wavelet_basis} also forms an unconditional basis for $L_q(\R^d)$. Hence, for those $q$ each $g \in L_q(\R^d)$ possesses a wavelet expansion
\begin{align}\label{wavelet_expansion}
	g = \sum_{I \in \I} \sum_{\eta \in \Psi} \distr{g}{\eta_I} \eta_I 
\end{align}
which converges in $L_q(\R^d)$.

For our purposes it is convenient to slightly modify this decomposition.
Therefore let~$S_0$ be the closure of all finite linear combinations of integer shifts of $\bigotimes_{n=1}^d \phi_m$ in $L_2(\R^d)$ and let~$P_0$ denote the orthogonal projector which maps $L_2(\R^d)$ onto $S_0$. Then, for every $1< q < \infty$, the operator $P_0$ can be extended to a projector on $L_q(\R^d)$ and in \link{wavelet_expansion} we can restrict ourselves to those $\eta_I$ for which 
\begin{equation*}
	I \in \I^+ 
	= \I^+(\R^d)
	= \{I\in\I(\R^d) \sep \abs{I}\leq 1\},
\end{equation*}
i.e., to wavelets corresponding to levels $j\in\N_0$. Moreover, we shall renormalize our wavelets and set
\begin{equation*}
	\eta_{I,p} = \abs{I}^{1/2 - 1/p} \eta_I
	\qquad \text{for all} \quad I\in\I^+, \quad \eta\in\Psi, \quad \text{and} \quad 0<p<\infty,
\end{equation*}
such that $\norm{\eta_{I,p} \sep L_p(\R^d)} = \norm{\eta \sep L_p(\R^d)}$ does not depend on $I$. Incorporating these conventions, from \link{wavelet_expansion} we conclude that every $g\in L_q(\R^d)$, $1<q<\infty$, can be expanded as 
\begin{align}
	g 
	&= P_0(g) + \sum_{I \in \I^+} \sum_{\eta \in \Psi} \distr{g}{\eta_I} \eta_I \nonumber\\
	&= P_0(g) + \sum_{I \in \I^+} \sum_{\eta \in \Psi} \distr{g}{\eta_{I,p'}} \eta_{I,p},
	 \label{wavelet_expansion_2}
\end{align}
where $p'$ satisfies $1/p'=1-1/p$.

\begin{lemma} \label{Prop:wavelet_representation}
Let $d\in\N$, $0 < p < \infty$, and $\sigma_p<s< r \in \N$. 
Moreover, choose $m\in\N$ such that $\phi_m, D_m \in C^r(\R)$. 
Then a function $g$ belongs to the Besov space $B^s_p(L_p(\mathbb{R}^d))$ if and only if \link{wavelet_expansion_2} holds with
\begin{equation}\label{equiv_norm_1}
	\norm{P_0(g) \sep L_p(\R^d)} + \left( \sum_{I \in \I^+} \sum_{\eta \in \Psi} \abs{I}^{-s\, p/d} \abs{\distr{g}{\eta_{I,p'}}}^p \right)^{1/p} < \infty.
\end{equation}
Furthermore, (\ref{equiv_norm_1}) provides an equivalent (quasi-)norm for $B^s_p(L_p(\R^d))$.
\end{lemma}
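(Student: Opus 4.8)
\emph{Proof proposal.} The assertion amounts to the two–sided norm estimate
\[
	\norm{g \sep B^s_p(L_p(\R^d))}
	\sim \norm{P_0(g) \sep L_p(\R^d)} + \left( \sum_{I \in \I^+} \sum_{\eta \in \Psi} \abs{I}^{-s\, p/d} \abs{\distr{g}{\eta_{I,p'}}}^p \right)^{1/p},
\]
and I would prove the two inequalities separately. Throughout I would exploit the $L_p$-normalisation $\norm{\eta_{I,p} \sep L_p(\R^d)} = \norm{\eta \sep L_p(\R^d)}$ (independent of $I$), the identity $\distr{g}{\eta_{I,p'}} = \abs{I}^{1/p}\int_{\R^d} g(2^{-j}(y+k))\, \eta(y)\d y$ for $I=I_{j,k}$, and the bounded overlap of the cubes $Q(I)$ (and their fixed dilates) at each dyadic level, cf.\ \link{def:QI}. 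Note also that since $s > \sigma_p$ the space $B^s_p(L_p(\R^d))$ embeds into $L_{\max\{1,p\}}^{\loc}(\R^d)$, so all the pairings $\distr{g}{\eta_I}$ are meaningful.

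\textbf{Sufficiency (wavelet synthesis).} Assume the right-hand side above is finite and write $g_j = \sum_{\abs{I}=2^{-jd}} \sum_{\eta}\distr{g}{\eta_{I,p'}}\eta_{I,p}$ for the block at level $j\in\N_0$. Bounded overlap of the supports gives $\norm{g_j\sep L_p(\R^d)}^p \sim a_j^p := \sum_{\abs{I}=2^{-jd}}\sum_\eta \abs{\distr{g}{\eta_{I,p'}}}^p$. Next I would estimate the modulus of smoothness of a single renormalised wavelet: the trivial bound gives $\omega_r(\eta_{I,p},t,\R^d)_p \lesssim 1$, while differentiating $r$ times and using $\eta\in C^r$ together with the scaling yields $\omega_r(\eta_{I,p},t,\R^d)_p \lesssim (2^j t)^r$; by bounded overlap this upgrades to $\omega_r(g_j,t,\R^d)_p \lesssim \min\{1,(2^j t)^r\}\,a_j$. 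Summing $\omega_r(g,t,\R^d)_p \leq \sum_{j\geq 0}\omega_r(g_j,t,\R^d)_p$, plugging into the Besov semi-norm \link{def:Besov_semi}, discretising the integral, and applying a discrete convolution (Hardy-type) inequality — whose kernel is summable precisely because $0 < s < r$ — I obtain $\abs{g}_{B^s_p(L_p(\R^d))} \lesssim (\sum_{j}2^{jsp}a_j^p)^{1/p}$, which is the claimed bound since $\abs{I}^{-s/d} = 2^{js}$; a parallel, simpler estimate (again using $s>0$) controls $\norm{g\sep L_p(\R^d)}$, and along the way one checks that \link{wavelet_expansion_2} converges in $B^s_p(L_p(\R^d))$, not merely in $L_p$.

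\textbf{Necessity (wavelet analysis).} Conversely, let $g \in B^s_p(L_p(\R^d))$. Since $\phi_m, D_m \in C^r(\R)$ forces $m\geq r$ and every $\eta\in\Psi$ is a tensor product with at least one factor $D_m$, each $\eta\in\Psi$ has vanishing moments up to order $m-1 \geq r-1$. Hence $\distr{g}{\eta_I} = \distr{g-P_I}{\eta_I}$ for any polynomial $P_I$ of degree $<r$, and choosing $P_I$ to be a near-best $L_p$-approximation of $g$ on a fixed dilate $c\,B(I)$ of the support, the change of variables above combined with Hölder's inequality (resp.\ the embedding into $L_{\max\{1,p\}}$ when $p<1$) gives — all powers of $\abs{I}$ cancelling — $\abs{\distr{g}{\eta_{I,p'}}} \lesssim E_r(g,c\,B(I))_p$, the error of best approximation by polynomials of degree $<r$. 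By Whitney's inequality the latter is $\lesssim \omega_r(g,\abs{I}^{1/d},c\,B(I))_p$. Raising to the power $p$, multiplying by $\abs{I}^{-sp/d}=2^{jsp}$, summing over $I$ at level $j$ (bounded overlap of the enlarged balls plus a standard covering argument turn $\sum_{\abs{I}=2^{-jd}}\omega_r(g,2^{-j},c\,B(I))_p^p$ into $\lesssim \omega_r(g,2^{-j},\R^d)_p^p$), and then over $j\geq 0$, I arrive at $\lesssim \sum_{j\geq 0}2^{jsp}\omega_r(g,2^{-j},\R^d)_p^p \sim \abs{g}_{B^s_p(L_p(\R^d))}^p$ by the usual discretisation of \link{def:Besov_semi}. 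Finally $\norm{P_0(g)\sep L_p(\R^d)} \lesssim \norm{g\sep L_p(\R^d)}$ since $P_0$ extends to a bounded projector on $L_p(\R^d)$.

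\textbf{Expected main obstacle.} The technical heart on the synthesis side is the legitimate interchange of summation over levels with the modulus of smoothness together with the discrete Hardy/convolution estimate, i.e.\ genuine convergence of the wavelet series in $B^s_p(L_p(\R^d))$. On the analysis side the delicate points are running Whitney's inequality on the balls $c\,B(I)$ with constants uniform in $I$ and, in the quasi-Banach range $p\leq 1$ (where $\sigma_p>0$), making sure that both the local polynomial-approximation machinery and the pairings $\distr{g}{\eta_I}$ are meaningful — which is exactly what the hypothesis $s>\sigma_p$ guarantees. Alternatively, one may bypass the direct computation entirely by invoking the coincidence of the difference-based and Fourier-analytic definitions of $B^s_p(L_p(\R^d))$ for $s>\sigma_p$ (see \autoref{rem:Besov_prop}(ii)) and then quoting the standard wavelet characterisation of the Fourier-analytic scale from the literature \cite{Tri2006}.
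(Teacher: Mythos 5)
Your proposal is correct in substance, but it takes a different route from the paper in the sense that the paper does not prove this lemma at all: it simply notes that the assertion is standard and refers to \cite{Mey1992} for the Banach case $p\geq 1$, to \cite{Kyr1996} for the quasi-Banach case $0<p<1$, and to \cite{Tri2006} for related formulations. What you have written out is essentially the internal proof of those references (the DeVore--Popov style argument): on the synthesis side, level-block estimates $\norm{g_j \sep L_p}^p\sim a_j^p$ by bounded overlap, the bound $\omega_r(g_j,t,\R^d)_p\lesssim\min\{1,(2^jt)^r\}a_j$, and a discrete Hardy-type inequality using $0<s<r$; on the analysis side, vanishing moments, local polynomial approximation and Whitney's estimate, then summation by bounded overlap. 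This buys a self-contained argument, whereas the paper's citation keeps the exposition short; your closing alternative (pass to the Fourier-analytic definition via \autoref{rem:Besov_prop}(ii) and quote the wavelet characterization from \cite{Tri2006}) is in fact the closest analogue of what the paper does. Two small points deserve care if you flesh the sketch out, both confined to $0<p\leq 1$ and both of the kind you already flag: first, H\"older's inequality is unavailable there, so the coefficient bound must go through $\abs{\distr{g-P}{\eta_{I,p'}}}\leq \norm{g-P \sep L_1(Q(I))}\,\norm{\eta_{I,p'} \sep L_\infty}$ combined with the cross-integrability Whitney bound $E_r(g,Q)_1\lesssim\abs{Q}^{1-1/p}\,\omega_r(g,\ell(Q),Q)_p$, which is legitimate precisely because $r>s>\sigma_p=d(1/p-1)$, and the powers of $\abs{I}$ then cancel as you claim; second, $P_0$ is a bounded projector on $L_p(\R^d)$ only for $1<p<\infty$, so for $p\leq 1$ the estimate $\norm{P_0(g) \sep L_p}\lesssim\norm{g \sep B^s_p(L_p(\R^d))}$ must itself be derived by the same local polynomial-approximation estimates applied to the coefficients $\distr{g}{\phi_{0,k}}$ (and the level sum of moduli should use the $p$-triangle inequality). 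With these routine adjustments your argument is complete and matches the cited literature.
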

The proof of this assertion is quite standard. For the case of Banach spaces ($p\geq 1$) it can be found, e.g., in \cite{Mey1992}. For the quasi-Banach case $0<p<1$ we refer to \cite{Kyr1996}. Similar assertions can also be found in \cite{Tri2006}.

\begin{Rem}\label{rem:wavelet_char_domain}
We stress the point that due to $s > \sigma_p$ every $g\in B^s_p(L_p(\R^d))$ belongs to some $L_q(\R^d)$, $1<q<\infty$, such that \link{wavelet_expansion_2} is well-defined; see \autoref{rem:Besov_prop}(iii). Moreover, we can use the extension operator $\E_\Omega$ described in \autoref{rem:Besov_prop}(v) to obtain similar norm equivalences for functions in $B^s_p(L_p(\Omega))$, where $\Omega\subset\R^d$ is a bounded Lipschitz domain.
\end{Rem}

As mentioned already in the introduction, we are particularly interested in Besov spaces $B^{\sigma}_{\tau}(L_{\tau}(\Omega))$ within the adaptivity scale of $L_p(\Omega)$, $1<p<\infty$, i.e., spaces with parameters that satisfy \link{eq:tau_scale}. Therefore, we specialize \autoref{Prop:wavelet_representation} for the corresponding spaces on $\R^d$:

\begin{Prop} \label{Prop:wavelet_representation_tau}
Let $d\in\N$, $1 < p < \infty$, as well as $0<\sigma< r \in \N$, and $\tau = (\sigma/d + 1/p)^{-1}$. Moreover, choose $m\in\N$ such that $\phi_m, D_m \in C^r(\R)$.
Then a function $g$ belongs to the Besov space $B^{\sigma}_{\tau}(L_{\tau}(\R^d))$ if and only if
\begin{equation*}
	g = P_0(g) + \sum_{I \in \I^+} \sum_{\eta \in \Psi} \distr{g}{\eta_{I,p'}} \eta_{I,p}
\end{equation*}
with
\begin{equation}\label{equiv_norm_2}
	\norm{P_0(g) \sep L_\tau(\R^d)} + \left( \sum_{I \in \I^+} \sum_{\eta \in \Psi} \abs{\distr{g}{\eta_{I,p'}}}^\tau \right)^{1/\tau} < \infty
\end{equation}
and (\ref{equiv_norm_2}) provides an equivalent (quasi-)norm for $B^{\sigma}_{\tau}(L_{\tau}(\R^d))$.
\end{Prop}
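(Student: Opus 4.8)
The plan is to derive \autoref{Prop:wavelet_representation_tau} as a direct specialization of \autoref{Prop:wavelet_representation}. The key observation is that the parameter constraint $\tau=(\sigma/d+1/p)^{-1}$ with $1<p<\infty$ and $\sigma>0$ forces $0<\tau<p<\infty$, and moreover $\sigma>\sigma_\tau$, so that \autoref{Prop:wavelet_representation} applies with the role of ``$p$'' there played by $\tau$, of ``$s$'' played by $\sigma$, and of ``$r\in\N$'' played by the given $r$. First I would verify the admissibility of the parameters: since $1/\tau=\sigma/d+1/p$ and $1/p<1$, we have $1/\tau<\sigma/d+1$, i.e.\ $\sigma>d(1/\tau-1)$, and trivially $\sigma>0\geq d(1/\tau-1)$ when $\tau\geq 1$; in either case $\sigma>\sigma_\tau=d\max\{1/\tau-1,0\}$, which is exactly the hypothesis $\sigma_\tau<\sigma$ needed to invoke \autoref{Prop:wavelet_representation}. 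The condition $0<\sigma<r\in\N$ and the choice of $m$ with $\phi_m,D_m\in C^r(\R)$ are carried over verbatim.

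Next I would substitute into the norm equivalence \link{equiv_norm_1}. With ``$p$'' $=\tau$ and ``$s$'' $=\sigma$, the exponent of $\abs{I}$ in the sum becomes $-\sigma\tau/d$, and the conjugate exponent ``$p'$'' becomes $\tau'$ with $1/\tau'=1-1/\tau$. So \autoref{Prop:wavelet_representation} gives that $g\in B^\sigma_\tau(L_\tau(\R^d))$ iff the expansion holds with
\begin{equation*}
	\norm{P_0(g)\sep L_\tau(\R^d)} + \left(\sum_{I\in\I^+}\sum_{\eta\in\Psi}\abs{I}^{-\sigma\tau/d}\,\abs{\distr{g}{\eta_{I,\tau'}}}^\tau\right)^{1/\tau}<\infty,
\end{equation*}
and that this quantity is an equivalent quasi-norm. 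It remains to rewrite this in the form \link{equiv_norm_2}, i.e.\ to replace the renormalization index $\tau'$ by $p'$ and to absorb the weight $\abs{I}^{-\sigma\tau/d}$. For the weight: by definition $\eta_{I,\tau'}=\abs{I}^{1/2-1/\tau'}\eta_I$ and $\eta_{I,p'}=\abs{I}^{1/2-1/p'}\eta_I$, hence $\distr{g}{\eta_{I,\tau'}}=\abs{I}^{1/\tau-1/p}\distr{g}{\eta_{I,p'}}$ (using $1/2-1/\tau'=1/\tau-1/2$ and similarly for $p$, so the ratio of the two renormalizations is $\abs{I}^{(1/\tau-1/2)-(1/p-1/2)}=\abs{I}^{1/\tau-1/p}$). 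Raising to the power $\tau$ and multiplying by $\abs{I}^{-\sigma\tau/d}$ yields the exponent $\tau(1/\tau-1/p)-\sigma\tau/d=1-\tau/p-\sigma\tau/d=1-\tau(\sigma/d+1/p)=1-\tau\cdot(1/\tau)=0$. Thus the weight cancels exactly and the summand reduces to $\abs{\distr{g}{\eta_{I,p'}}}^\tau$, which is precisely \link{equiv_norm_2}.

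Since \link{equiv_norm_1} and \link{equiv_norm_2} agree term by term after this algebraic identity, their finiteness is equivalent and the two quasi-norms coincide (not merely up to constants), so the equivalence with $\norm{g\sep B^\sigma_\tau(L_\tau(\R^d))}$ transfers. I do not anticipate a genuine obstacle here; the argument is entirely bookkeeping. The one place that deserves a careful line is the exponent arithmetic showing that $\sigma/d+1/p=1/\tau$ makes the $\abs{I}$-power collapse to zero — this is where the special geometry of the adaptivity scale \link{eq:tau_scale} is used, and it is worth displaying explicitly so the reader sees why $B^\sigma_\tau(L_\tau)$ is exactly the scale for which the simple unweighted $\ell_\tau$-summability of wavelet coefficients characterizes membership.
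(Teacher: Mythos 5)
Your proposal is correct and follows essentially the same route as the paper's own (very brief) proof: specialize \autoref{Prop:wavelet_representation} with $p$ replaced by $\tau$ and $s$ by $\sigma$, and use the identity $\eta_{I,\tau'}=\abs{I}^{1/p'-1/\tau'}\eta_{I,p'}$ so that the weight $\abs{I}^{-\sigma\tau/d}$ cancels exactly on the adaptivity scale. Your additional check that $\sigma>\sigma_\tau$ is precisely the routine verification the paper leaves implicit in ``easily follows.''
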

\begin{proof}
Observe that $\eta_{I,\tau'} = \abs{I}^{1/p' - 1/\tau'} \eta_{I,p'}$ implies $\abs{I}^{-\sigma \tau / d} \abs{\distr{g}{\eta_{I,\tau'}}}^{\tau} = \abs{\distr{g}{\eta_{I,p'}}}^{\tau}$.
Then the proof easily follows from \autoref{Prop:wavelet_representation}.
\end{proof}


\section{A general embedding} \label{sec:embedding}
In this section we prove that, under some growth conditions on the local H\"older semi-norm, the intersection $B^s_p(L_p(\Omega)) \cap C^{\ell,\alpha}_{\gamma, \loc}(\Omega)$ is continuously embedded into certain Besov spaces $B^{\sigma}_{\tau}(L_{\tau}(\Omega))$. 

\begin{theorem} \label{theorem:embedding}
For $d\in\N$ with $d\geq 2$, let $\Omega\subset\R^d$ denote some bounded Lipschitz domain.
Moreover, let $s>0$ and $1<p<\infty$, as well as $\ell\in\N_0$, $0<\alpha\leq 1$, and $0<\gamma < \ell + \alpha + 1/p$. If we define
\begin{equation}\label{def:max_alpha}
	\sigma^* 
	= \begin{dcases}
		\ell + \alpha
			&\;\text{if}\quad 
			\qquad\qquad 0 < \gamma < \frac{\ell+\alpha}{d} + \frac{1}{p},\\
		\frac{d}{d-1} \left( \ell + \alpha + \frac{1}{p} - \gamma \right)
			&\;\text{if}\quad\,
			\frac{\ell+\alpha}{d} + \frac{1}{p} \leq \gamma < \ell + \alpha + \frac{1}{p},
	\end{dcases}
\end{equation}
then for all
\begin{equation}\label{range:thm}
	0 < \sigma < \min\!\left\{ \sigma^*, \frac{d}{d-1}\, s \right\}
	\qquad \text{and} \qquad 
	\frac{1}{\tau} = \frac{\sigma}{d} + \frac{1}{p}
\end{equation}
we have the continuous embedding
\begin{equation*}
	B^{s}_p(L_p(\Omega)) \; \cap \; C^{\ell,\alpha}_{\gamma,\loc}(\Omega)
	\; \hookrightarrow \; B^{\sigma}_{\tau}(L_{\tau}(\Omega)),
\end{equation*}
i.e., for all $u\in B^{s}_p(L_p(\Omega)) \cap C^{\ell,\alpha}_{\gamma,\loc}(\Omega)$ it holds
\begin{equation}\label{ineq:norm_est}
	\norm{u \sep B^{\sigma}_{\tau}(L_{\tau}(\Omega))} 
	\lesssim \max\left\{ \norm{u \sep B^{s}_p(L_p(\Omega))}, \abs{u}_{C^{\ell,\alpha}_{\gamma,\loc}} \right\}.
\end{equation}
\end{theorem}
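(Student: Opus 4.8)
The plan is to extend $u$ to $\R^d$, read off membership in $B^{\sigma}_{\tau}(L_{\tau}(\R^d))$ from the wavelet characterization \autoref{Prop:wavelet_representation_tau}, and split the sum over wavelet indices into an \emph{interior} part, controlled by the weighted H\"older semi-norm via vanishing moments, and a \emph{boundary} part, controlled by the Besov norm $B^s_p$. First I would fix $m\in\N$ large enough that $\phi_m,D_m\in C^r(\R)$ with $r>\max\{s,\sigma^*\}$ and $m\geq\ell+1$; then both \autoref{Prop:wavelet_representation} and \autoref{Prop:wavelet_representation_tau} apply and every $\psi^e$, $e\in E$, annihilates polynomials of degree $\leq\ell$. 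Let $\tilde u=\E_\Omega u$ be the extension from \autoref{rem:Besov_prop}(v), so that $\tilde u\in B^s_p(L_p(\R^d))$ with $\norm{\tilde u\sep B^s_p(L_p(\R^d))}\lesssim\norm{u\sep B^s_p(L_p(\Omega))}$, $\tilde u|_\Omega=u$, and $\supp\tilde u\subset\Omega'$ for some bounded $\Omega'\supset\Omega$. Since restriction $B^{\sigma}_{\tau}(L_{\tau}(\R^d))\to B^{\sigma}_{\tau}(L_{\tau}(\Omega))$ is bounded (directly from the modulus-of-smoothness definition), it suffices to bound $\norm{\tilde u\sep B^{\sigma}_{\tau}(L_{\tau}(\R^d))}$ by the right-hand side of \link{ineq:norm_est}, which by \autoref{Prop:wavelet_representation_tau} means estimating $\norm{P_0(\tilde u)\sep L_\tau(\R^d)}$ and $\sum_{I\in\I^+}\sum_{\eta\in\Psi}\abs{\distr{\tilde u}{\eta_{I,p'}}}^\tau$. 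The first term is immediate: only finitely many integer shifts of $\bigotimes_n\phi_m$ meet $\Omega'$, so $P_0(\tilde u)$ is a finite combination of compactly supported functions with coefficients $\lesssim\norm{\tilde u\sep L_p(\R^d)}$, whence $\norm{P_0(\tilde u)\sep L_\tau(\R^d)}\lesssim\norm{\tilde u\sep L_p(\R^d)}\lesssim\norm{u\sep B^s_p(L_p(\Omega))}$ (using $\tau<p$, which holds since $\sigma>0$, together with boundedness of the supports).

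For the coefficient estimates, for $I=I_{j,k}\in\I^+$ let $B(I)$ be as in \link{def:QI}; call $I$ interior if the ball $B(I)$ belongs to $\K(c)$, and boundary otherwise, discarding the $I$ with $Q(I)\cap\Omega'=\emptyset$ (their coefficient vanishes). If $I$ is interior then $\distr{\tilde u}{\eta_{I,p'}}=\distr{u}{\eta_{I,p'}}$ and $\delta_{B(I)}=\dist(B(I),\partial\Omega)\gtrsim 2^{-j}$; subtracting from $u$ its degree-$\ell$ Taylor polynomial at the center of $Q(I)$, using the vanishing moments of $\eta_I$, Taylor's theorem with $(\ell,\alpha)$-H\"older continuous top derivatives, $\norm{\eta_I\sep L_1(\R^d)}\sim 2^{-jd/2}$, and $\eta_{I,p'}=\abs{I}^{1/2-1/p'}\eta_I$, one gets
\[
	\abs{\distr{u}{\eta_{I,p'}}}
	\;\lesssim\;\abs{I}^{1/p+(\ell+\alpha)/d}\,\abs{u}_{C^{\ell,\alpha}(B(I))}
	\;\lesssim\;\abs{I}^{1/p+(\ell+\alpha)/d}\,\delta_{B(I)}^{-\gamma}\,\abs{u}_{C^{\ell,\alpha}_{\gamma,\loc}},
\]
the last step being the definition of the locally weighted H\"older semi-norm. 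For boundary $I$ I would instead use that, by \autoref{Prop:wavelet_representation}, $\sum_{I\in\I^+}\sum_\eta\abs{I}^{-sp/d}\abs{\distr{\tilde u}{\eta_{I,p'}}}^p\lesssim\norm{\tilde u\sep B^s_p(L_p(\R^d))}^p$.

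Finally the summation. Since $\partial\Omega$ is Lipschitz, the set $\{x:\dist(x,\partial\Omega)\sim 2^{-k}\}$ has $d$-dimensional volume $\lesssim 2^{-k}$, so at a fixed level $j$ there are $\lesssim 2^{jd-k}$ wavelets $I$ with $Q(I)$ in that layer, and in particular $\#\{\text{boundary }I\text{ at level }j\}\lesssim 2^{j(d-1)}$. For the boundary part, writing $\abs{\distr{\tilde u}{\eta_{I,p'}}}^\tau=\bigl(\abs{I}^{-sp/d}\abs{\distr{\tilde u}{\eta_{I,p'}}}^p\bigr)^{\tau/p}\abs{I}^{s\tau/d}$ and applying H\"older's inequality with exponents $p/\tau$ and $p/(p-\tau)$, the first factor sums to $\lesssim\norm{\tilde u\sep B^s_p(L_p(\R^d))}^\tau$ and the second to $\sum_j 2^{j(d-1)}2^{-js\tau p/(p-\tau)}$, which converges exactly when $\sigma<\tfrac{d}{d-1}s$ (using $p/\tau=1+\sigma p/d$). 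For the interior part, summing the pointwise bound and organising by level $j$ and dyadic distance layer $\delta_{B(I)}\sim 2^{-k}$, $0\leq k\lesssim j$,
\[
	\sum_{\text{interior }I}\sum_\eta\abs{\distr{u}{\eta_{I,p'}}}^\tau
	\;\lesssim\;\abs{u}_{C^{\ell,\alpha}_{\gamma,\loc}}^\tau\sum_{j\geq 0}2^{jd-jd\tau(1/p+(\ell+\alpha)/d)}\sum_{k\lesssim j}2^{k(\gamma\tau-1)};
\]
since $1-\tau/p=\tau\sigma/d$ the $j$-exponent collapses to $\tau(\sigma-\ell-\alpha)$, while $\sum_{k\lesssim j}2^{k(\gamma\tau-1)}\lesssim(1+j)\,2^{j(\gamma\tau-1)_+}$. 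If $\gamma<1/\tau$ the inner sum is bounded and convergence needs only $\sigma<\ell+\alpha$; if $\gamma\geq 1/\tau$ one picks up the factor $2^{j(\gamma\tau-1)}$ and needs $\sigma<\tfrac{d}{d-1}(\ell+\alpha+1/p-\gamma)$. A short computation (distinguishing whether $\gamma$ lies below or above $\tfrac{\ell+\alpha}{d}+\tfrac1p$, and noting $\sigma^*\leq\ell+\alpha$, hence $1/\tau=\sigma/d+1/p<\tfrac{\ell+\alpha}{d}+\tfrac1p$ for every admissible $\sigma<\sigma^*$) shows that both conditions hold whenever $\sigma<\sigma^*$ as in \link{def:max_alpha}. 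Combining the three estimates with \autoref{Prop:wavelet_representation_tau} gives $\norm{\tilde u\sep B^{\sigma}_{\tau}(L_{\tau}(\R^d))}\lesssim\max\{\norm{u\sep B^s_p(L_p(\Omega))},\abs{u}_{C^{\ell,\alpha}_{\gamma,\loc}}\}$, and restriction to $\Omega$ yields \link{ineq:norm_est}.

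The main obstacle is exactly the bookkeeping in this last step: counting wavelets per level and per distance layer from the Lipschitz boundary, and performing the case analysis so that it reproduces the two regimes of $\sigma^*$ in \link{def:max_alpha} \emph{precisely}. Once this is set up, the per-coefficient estimates — vanishing moments on the interior side, the $B^s_p$ wavelet characterization combined with H\"older's inequality on the boundary side — are routine.
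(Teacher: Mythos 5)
Your overall strategy (extension, wavelet characterization, interior/boundary splitting, vanishing moments plus weighted H\"older control inside, H\"older's inequality plus the $B^s_p$ characterization near the boundary) is exactly the paper's, but there is one genuine gap, and it sits at the decisive point. You reduce the problem to bounding $\norm{\E_\Omega u \sep B^{\sigma}_{\tau}(L_{\tau}(\R^d))}$ and then classify as ``boundary'' every $I$ with $Q(I)\cap\Omega'\neq\emptyset$ whose ball $B(I)$ does not belong to $\K(c)$. Since $\supp(\E_\Omega u)$ is only contained in a neighbourhood $\Omega'$ of $\Omega$, and $\Omega'\setminus\overline{\Omega}$ has positive volume on which $\E_\Omega u$ does not vanish, this boundary class contains \emph{all} wavelets supported in $\Omega'\setminus\overline{\Omega}$, of which there are $\sim 2^{jd}$ per level $j$ --- not $\lesssim 2^{j(d-1)}$; the $2^{j(d-1)}$ count is only valid for wavelets within distance $\sim 2^{-j}$ of $\partial\Omega$. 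With the correct count your H\"older step only converges for $\sigma<s$, not $\sigma<\frac{d}{d-1}s$, and no repair is possible for the full extension: on $\Omega'\setminus\Omega$ the function $\E_\Omega u$ is merely a $B^s_p$ function with no weighted H\"older information, so $\E_\Omega u$ need not belong to $B^{\sigma}_{\tau}(L_{\tau}(\R^d))$ at all once $\sigma>s$, which is precisely the regime the theorem is about.

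The missing idea --- and the paper's fix --- is to estimate not $\E_\Omega u$ itself but a \emph{modified} extension: discard from the wavelet expansion of $\E_\Omega u$ all $(I,\eta)$ whose (expanded) support ball does not meet $\Omega$, obtaining $\tilde u = P_0(\E_\Omega u)+u_1+u_2$; the discarded wavelets vanish on $\Omega$, so $\tilde u\big|_\Omega = u$, and $\norm{u\sep B^{\sigma}_{\tau}(L_{\tau}(\Omega))}$ is controlled by the $\R^d$-norm of \emph{any} extension. After this modification the boundary class consists only of wavelets meeting $\Omega$ whose supports are not well inside it, hence lying within $O(2^{-j})$ of $\partial\Omega$, and your $2^{j(d-1)}$ count and the resulting condition $\sigma<\frac{d}{d-1}s$ become correct. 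Everything else in your proposal is sound and matches the paper: the Taylor/vanishing-moment bound $\abs{\distr{u}{\eta_{I,p'}}}\lesssim\abs{I}^{1/p+(\ell+\alpha)/d}\delta_{B(I)}^{-\gamma}\abs{u}_{C^{\ell,\alpha}_{\gamma,\loc}}$, the layer count $2^{jd-k}$, and your case distinction $\gamma\tau\lessgtr 1$ does reproduce both regimes of $\sigma^*$; indeed, when $\gamma<(\ell+\alpha)/d+1/p$ but $\gamma\tau\geq 1$, the required bound $\sigma<\frac{d}{d-1}(\ell+\alpha+1/p-\gamma)$ holds automatically because the right-hand side then exceeds $\ell+\alpha$, which slightly streamlines the paper's extra embedding argument in its Substep 4.1.
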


Let us briefly comment on \autoref{theorem:embedding} before we give its proof:
From the theory of function spaces it is well-known that (standard) embeddings between Besov spaces, e.g., 
\begin{equation*}
	B^{s}_p(L_p(\Omega)) \hookrightarrow B^{\sigma}_{\tau}(L_{\tau}(\Omega)),
\end{equation*}
are valid \emph{only if} the regularity of the target space is at most as large as the smoothness of the space we start from, i.e., only if $\sigma \leq s$. 
\autoref{theorem:embedding} now states that, under suitable assumptions on the parameters involved, exploiting the additional information on locally weighted H\"older regularity (encoded by the membership of $u$ in $C^{\ell,\alpha}_{\gamma,\loc}(\Omega)$) enables us to prove that functions from $B^{s}_p(L_p(\Omega))$ indeed possess a higher-order Besov regularity $\sigma > s$ measured in the adaptivity scale corresponding to $L_p(\Omega)$. Since $B^{s}_p(L_p(\Omega))$ almost equals the Sobolev space $W^s(L_p(\Omega))$ (cf.\ \autoref{rem:Besov_prop}(iv)) this shows that 
approximating $u \in W^s(L_p(\Omega)) \; \cap  \; C^{\ell,\alpha}_{\gamma,\loc}(\Omega)$ in an adaptive way is justified whenever $\sigma^*$ defined by \link{def:max_alpha} is larger than $s$. 
At this point we remark that $\sigma^*$ is a continuous piecewise linear function of $\gamma\in(0,\ell+\alpha+1/p)$ which decreases to zero when $\gamma$ approaches its upper bound.
Hence, in any case $0<\sigma^*\leq \ell+\alpha$.
Thus, for a fixed value of $s$, the maximal regularity $d/(d-1)\cdot s$ is achieved if $\ell+\alpha$ is sufficiently large and $\gamma$ is small enough.

The proof of \autoref{theorem:embedding} given below is inspired by ideas first given in \cite{DahDev1997}. Due to extension arguments in conjunction with the wavelet characterization of Besov spaces on $\R^d$ (see \autoref{rem:wavelet_char_domain}) it suffices to find suitable estimates for the wavelet coefficients $\distr{u}{\eta_{I,p'}}$, $I\in\I^+$, $\eta\in\Psi$, which then imply \link{ineq:norm_est}. The contribution of (the relatively small number of) wavelets supported in the vicinity of the boundary of $\Omega$ (\emph{boundary wavelets}) can be bounded in terms of the norm of $u$ in $B^s_p(L_p(\Omega))$. 
Here the restriction $\sigma < s \cdot d/(d-1)$ comes in. 
The coefficients corresponding to the remaining \emph{interior wavelets} can be upper bounded by the semi-norm of $u$ in $C^{\ell,\alpha}_{\gamma,\loc}$ using a Whitney-type argument which then gives rise to the restriction $\sigma<\sigma^*$. The detailed proof reads as follows:

\begin{proof}[Proof (of \autoref{theorem:embedding})]
\emph{Step 1.} 
Let $u\in B^{s}_p(L_p(\Omega)) \cap C^{\ell,\alpha}_{\gamma,\loc}(\Omega)$. 
Since for $1<p<\infty$ it is $\sigma_p=0$ and $s > 0$, every such $u$ can be extended to some $\E_\Omega u \in B^s_p(L_p(\R^d))$; see \autoref{rem:Besov_prop}(v).
In particular, $\E_\Omega u\in L_p(\R^d)$ such that it can be written as
\begin{equation*}
	\E_\Omega u
	= P_0(\E_\Omega u) + \sum_{(I,\eta)\in\I^+\times\Psi} \distr{\E_\Omega u}{\eta_{I,p'}} \eta_{I,p}.
\end{equation*}
Here the $\eta_I$ form a system of Daubechies wavelets \link{wavelet_basis}, where $m\in\N$ is chosen such that $m>\ell$ and $\phi_m, D_m\in C^r(\R)$ for some $r\in\N$ with $r > \max\{\sigma,s\}$; see \autoref{subsec:wavelet_char} for details.
We restrict the latter expansion and consider only those wavelets for which $(I,\eta)$ belongs to
\begin{equation*}
	\Lambda 
	= \bigcup_{j\in\N_0} \Lambda_j,
	\quad \text{ where we set } \quad
	\Lambda_j = \left\{ (I,\eta)\in \I^+\times\Psi \sep B_{c}(I) \cap \Omega \neq \emptyset \; \text{ and } \; \abs{I}=2^{-jd} \right\}.
\end{equation*}
Therein $B_c(I)$ denotes the ball $B(I)$ (see \link{def:QI}) concentrically expanded by the factor $c>1$ which we used to define the class $C^{\ell,\alpha}_{\gamma,\loc}(\Omega)$; cf.\ \autoref{subsec:Hoelder}. Note that thus $\supp(\eta_I)\subset B_c(I)$ for all $I$ and $\eta$.
Next we split up the index sets $\Lambda_j$ once more and write
\begin{equation*}
	\Lambda_j 
	= \bigcup_{n\in\N_0} \Lambda_{j,n}
	\quad \text{ with } \quad
	\Lambda_{j,n} = \left\{ (I_{j,k},\eta)\in\Lambda_j \sep n\, 2^{-j} \leq \dist\!\left( 2^{-j}k, \partial\Omega \right) < (n+1)\, 2^{-j} \right\},
\end{equation*}
for every dyadic level $j\in\N_0$.
Note that, due to the boundedness of $\Omega$, there exists an absolute constant $C_1$ such that $\Lambda_{j,n}=\emptyset$ for all $j\in\N_0$ and $n>C_1\,2^j$. For example, we may take $C_1=\max\{\diam(\Omega), c\, \diam(Q)\}$. Moreover, our assumption that $\Omega$ is a bounded Lipschitz domain ensures that all remaining index sets satisfy at least $\abs{\Lambda_{j,n}} \lesssim 2^{-j(d+1)}$.
Finally, we note that all balls $B_c(I)$ corresponding to $(I,\eta)\in\Lambda_{j,n}$ with $j\in\N_0$ and $n$ strictly larger than $C_0 = \lceil c\, \diam(Q)/2 \rceil$ are completely contained in $\Omega$.
These considerations justify the disjoint splitting $\Lambda = \left( \bigcup_{j\in\N_0} \Lambda^\mathrm{bnd}_j \right) \cup \left( \bigcup_{j\in\N_0} \Lambda^\mathrm{int}_j \right)$, where
\begin{equation*}
	\Lambda^\mathrm{bnd}_j = \bigcup_{n=0}^{C_0} \Lambda_{j,n}
	\qquad \text{and} \qquad
	\Lambda^\mathrm{int}_j = \bigcup_{n=C_0+1}^{C_1\,2^j} \Lambda_{j,n}
\end{equation*}
correspond to the sets of boundary and interior wavelets at level $j\in\N_0$, respectively.
Observe that then
$\tilde{u}=u_0+u_1+u_2$, defined by
\begin{equation*}
	u_0 = P_0(\E_\Omega u), \quad 
	u_1 = \sum_{j\in\N_0}\sum_{(I,\eta)\in \Lambda^\mathrm{bnd}_j} \distr{\E_\Omega u}{\eta_{I,p'}} \eta_{I,p}, \quad \text{and} \quad
	u_2 = \sum_{j\in\N_0}\sum_{(I,\eta)\in \Lambda^\mathrm{int}_j} \distr{u}{\eta_{I,p'}} \eta_{I,p},
\end{equation*}
is an extension of $u$ as well, i.e., it satisfies $\tilde{u}\big|_{\Omega}=u$. In Step 2--4 below we will show that for the adaptivity scale $\tau=\left( \sigma/d+1/p\right)^{-1}$ it holds
\begin{align}
	&\norm{u_0 \sep B^{\sigma}_\tau(L_\tau(\R^d))} 
		\lesssim \norm{P_0(\E_\Omega u) \sep L_p(\R^d)} 		&\text{if }\:& 0<\sigma,
		\label{est:projection}\\
	&\norm{u_1 \sep B^{\sigma}_\tau(L_\tau(\R^d))} 
		\lesssim \left[ \sum_{j\in\N_0}\sum_{(I,\eta)\in \Lambda^\mathrm{bnd}_j} \!\! \abs{I}^{-s\,p/d} \abs{\distr{\E_\Omega u}{\eta_{I,p'}}}^p \right]^{1/p} &\text{if }\:& 0 < \sigma< \frac{d}{d-1}\,s, \text{ and}
		\label{est:boundary}\\
	&\norm{u_2 \sep B^{\sigma}_\tau(L_\tau(\R^d))} 
		\lesssim \abs{u}_{C^{\ell,\alpha}_{\gamma,\loc}} &\text{if }\:& 0<\sigma<\sigma^*.
		\label{est:interior}
\end{align}
Suppose we already know that those relations hold for all $\sigma$ and $\tau$ that satisfy \link{range:thm}.
Then we can extend the index set in \link{est:boundary} from $\bigcup_{j\in\N_0} \Lambda^\mathrm{bnd}_j$ to $\I^+\times\Psi$ and the wavelet characterization of $\E_\Omega u\in B^s_p(L_p(\R^d))$ (cf. \autoref{Prop:wavelet_representation}) together with the continuity of $\E_\Omega$ implies
\begin{equation}\label{est:ext}
	\norm{u_0+u_1 \sep B^{\sigma}_\tau(L_\tau(\R^d))}
	\lesssim \norm{\E_\Omega u \sep B^s_p(L_p(\R^d))} 
	\sim \norm{u \sep B^s_p(L_p(\Omega))}
\end{equation}
which is finite due to our assumptions.
Therefore, the special choice $g=\tilde{u}=(u_0+u_1)+u_2$, in conjunction with \link{est:interior} and \link{est:ext}, yields the desired estimate
\begin{align*}
	\norm{u \sep B^{\sigma}_\tau(L_\tau(\Omega))}
	&\sim \inf 
		\!\left\{ \norm{g \sep B^{\sigma}_\tau(L_\tau(\R^d))} \sep g\in B^{\sigma}_\tau(L_\tau(\R^d)) \; \text{ with } \; g\big|_{\Omega}=u \right\}\\
	&\lesssim  \norm{u_0 + u_1 \sep B^{\sigma}_\tau(L_\tau(\R^d))} + \norm{u_2 \sep B^{\sigma}_\tau(L_\tau(\R^d))}\\
	&\lesssim \max\left\{ \norm{u \sep B^s_p(L_p(\Omega))}, \abs{u}_{C^{\ell,\alpha}_{\gamma,\loc}} \right\}.
\end{align*}
This proves \autoref{theorem:embedding} since $u\in B^s_p(L_p(\Omega))$ with $s>0=\sigma_p$ particularly implies that $u\in L_p(\Omega) \hookrightarrow L_\tau(\Omega)$, due to $\tau < p$ and the boundedness of $\Omega$. Hence, $u\in B^{\sigma}_\tau(L_\tau(\Omega))$.

\emph{Step 2 (Estimate for $u_0$).}
To show the bound on the projection onto the coarse levels let $\tau=\left( \sigma/d+1/p\right)^{-1}$ and $\sigma>0$. We note that $u_0 \bot \eta_{I,p'}$ for all $I\in\I^+$ and $\eta\in\Psi$, i.e., $u_0=P_0(u_0)$. Moreover, by definition, this equals $P_0(\E_\Omega u)$ which has compact support in~$\R^d$ since $\E_\Omega$ is local; see \autoref{rem:Besov_prop}(v).
\autoref{Prop:wavelet_representation_tau}, i.e., the wavelet characterization of $B^{\sigma}_\tau(L_\tau(\R^d))$, therefore gives
\begin{equation*}
	\norm{u_0 \sep B^{\sigma}_\tau(L_\tau(\R^d))} 
	\sim \norm{P_0(\E_\Omega u) \sep L_\tau(\R^d)}
	\lesssim \norm{P_0(\E_\Omega u) \sep L_p(\R^d)},
\end{equation*}
due to $\tau < p$. That is, we have shown \link{est:projection}.

\emph{Step 3 (Estimate for $u_1$).}
Here we establish the bound on the contribution of all wavelets near $\partial\Omega$.
To this end, assume again that $\tau=\left( \sigma/d+1/p\right)^{-1}$ with $\sigma>0$. We fix $j\in\N_0$ for a moment and apply H\"older's inequality (with $q=p/\tau>1$) to estimate
\begin{align*}
	\sum_{(I,\eta)\in \Lambda^\mathrm{bnd}_j} \abs{\distr{\E_\Omega u}{\eta_{I,p'}}}^\tau
	&\leq \abs{\Lambda^\mathrm{bnd}_j}^{1-\tau/p} \left( \sum_{(I,\eta)\in\Lambda^\mathrm{bnd}_j} \abs{\distr{\E_\Omega u}{\eta_{I,p'}}}^p \right)^{\tau/p} \\
	& \lesssim 2^{j(d-1)(1-\tau/p)} \, 2^{-j\, s\, \tau} \left( \sum_{(I,\eta)\in\Lambda^\mathrm{bnd}_j} \abs{I}^{-s \, p/d} \abs{\distr{\E_\Omega u}{\eta_{I,p'}}}^p \right)^{\tau/p}.
\end{align*}
Taking the sum over all levels $j$ and using H\"older's inequality once more (with the same $q$), we find
\begin{align}
	&\sum_{j\in\N_0}\sum_{(I,\eta)\in\Lambda^\mathrm{bnd}_j} \abs{\distr{\E_\Omega u}{\eta_{I,p'}}}^\tau \label{final_est_boundary}\\
	&\qquad \lesssim \left( \sum_{j\in\N_0} \left[2^{ (d-1)-s\,\tau /(1-\tau/p) } \right]^j \right)^{1-\tau/p} \left( \sum_{j\in\N_0}\sum_{(I,\eta)\in\Lambda^\mathrm{bnd}_j} \abs{I}^{-s \, p/d} \abs{\distr{\E_\Omega u}{\eta_{I,p'}}}^p \right)^{\tau/p} \nonumber\\
	&\qquad \lesssim \left( \sum_{j\in\N_0}\sum_{(I,\eta)\in\Lambda^\mathrm{bnd}_j} \abs{I}^{-s \, p/d} \abs{\distr{\E_\Omega u}{\eta_{I,p'}}}^p \right)^{\tau/p}, \nonumber
\end{align}
provided that we additionally assume 
\begin{equation*}
	\sigma < \frac{d}{d-1} \, s,
\end{equation*}
since this condition is equivalent to $1/\tau < s/(d-1) + 1/p$ which in turn holds if and only if
$(d-1)-s\,\tau /(1-\tau/p)<0$.
Finally, the structure of $u_1$ together with \autoref{Prop:wavelet_representation_tau} shows that the quantity \link{final_est_boundary} is equivalent to $\norm{u_1 \sep B^{\sigma}_\tau(L_\tau(\R^d))}^\tau$ such that \link{est:boundary} follows.

\emph{Step 4 (Estimate for $u_2$).}
We are left with the proof of \link{est:interior}, i.e., the bound for the interior wavelets indexed by $(I,\eta)\in \bigcup_{j\in\N_0} \Lambda^\mathrm{int}_j$.
Recall that $\eta_{I,p'}$ is orthogonal to every polynomial $\P$ of total degree strictly less than $m$. Therefore, for all $(I,\eta)$ under consideration, 
\begin{align*}
	\abs{\distr{u}{\eta_{I,p'}}} 
	= \abs{\distr{u-\P}{\eta_{I,p'}}} 
	\leq \norm{u-\P \sep L_p(Q(I))} \cdot \norm{\eta_{I,p'} \sep L_{p'}(Q(I))}
	\lesssim \norm{u-\P \sep L_p(Q(I))}.
\end{align*}
Consequently, a Whitney-type argument (i.e., the application of \autoref{Prop:Whitney} stated in the Appendix with $t=\ell+\alpha$ and $q=\infty$) shows that
\begin{equation*}
	\abs{\distr{u}{\eta_{I,p'}}} 
	\lesssim \inf_{\P \in \Pi_\ell} \norm{u-\P \sep L_p(Q(I))}
	\lesssim \abs{Q(I)}^{(\ell+\alpha)/d+1/p} \abs{u}_{B^{\ell+\alpha}_\infty(L_\infty(Q(I))},
\end{equation*}
since we assumed $m>\ell$.
Next we use \link{def:QI} and estimate the Besov semi-norm by the H\"older semi-norm (see \autoref{Prop:semi_norms}) to obtain
\begin{align}
	\abs{\distr{u}{\eta_{I,p'}}} 
	&\lesssim 2^{-j(\ell+\alpha+d/p)} \abs{u}_{C^{\ell,\alpha}(Q(I))} \nonumber\\
	&\lesssim 2^{-j(\ell+\alpha+d/p)} \delta_{B(I)}^{-\gamma} \abs{u}_{C^{\ell,\alpha}_{\gamma,\loc}} \quad \; \text{for all} \; \quad 
	(I,\eta)\in\bigcup_{j\in\N_0} \Lambda^\mathrm{int}_j= \bigcup_{j\in\N_0}\bigcup_{n=C_0+1}^{C_1\,2^j} \Lambda_{j,n}, \label{est_int}
\end{align}
because the open cubes $Q(I)$ are contained in the closed balls $B(I)$ by definition.
For fixed $j\in\N_0$, $n\in\{C_0+1,C_0+2,\ldots,C_1 \,2^j\}$, and $(I,\eta)\in\Lambda_{j,n}$, we have
\begin{equation}\label{est_delta}
	\delta_{B(I)} 
	\geq \delta_{B_c(I)} 
	\geq \dist\!\left(2^{-j}k,\partial\Omega\right) - \frac{c\,\diam(Q)}{2}\,2^{-j}
	\geq (n-C_0)\,2^{-j}.
\end{equation}
Now let $\tau>0$ and recall the estimate $\abs{\Lambda_{j,n}}\lesssim 2^{j(d-1)}$ which we found in Step 1. 
Combining this with \link{est_int} and \link{est_delta} thus yields
\begin{align}
	\sum_{(I,\eta)\in\Lambda^\mathrm{int}_j} \abs{\distr{u}{\eta_{I,p'}}}^\tau
	&\lesssim \sum_{n=C_0+1}^{C_1\, 2^j} \sum_{(I,\eta)\in\Lambda_{j,n}} 2^{-j(\ell+\alpha+d/p)\tau} \, (n-C_0)^{-\gamma\tau} \,2^{j\gamma\tau} \abs{u}_{C^{\ell,\alpha}_{\gamma,\loc}}^\tau \nonumber\\
	&\lesssim \abs{u}_{C^{\ell,\alpha}_{\gamma,\loc}}^\tau \, 2^{-j(\ell+\alpha+d/p-\gamma)\tau+j(d-1)} \sum_{t=1}^{C_1\, 2^j} t^{-\gamma\tau},
	\qquad\qquad\qquad j\in\N_0. \label{est:int}
\end{align}
Note that, due to the assumption $\gamma > 0$, the quantity $\gamma\tau$ is always positive. Then straightforward calculations show that for all $j\in\N_0$
\begin{equation*}
	1 
	\leq \sum_{t=1}^{C_1\, 2^j} t^{-\gamma\tau}
	\lesssim \begin{cases}
		2^{j(1-\gamma\tau)} &\text{if} \quad \gamma\tau\in(0,1),\\
		1+j &\text{if} \quad \gamma\tau=1,\\ 
		1 &\text{if} \quad \gamma\tau>1,\\
	\end{cases}
\end{equation*}
such that we have to distinguish several cases for $\gamma$ in what follows:

\emph{Substep 4.1 (Small $\gamma$)}.
Let us consider the case $0<\gamma<(\ell+\alpha)/d+1/p$ first. Then obviously $d(\gamma - 1/p) < \ell + \alpha$, such that we can set
\begin{equation}\label{def:range_alpha}
	\tau =\left( \frac{\sigma}{d} + \frac{1}{p} \right)^{-1}
	\qquad \text{with} \qquad
	\max\!\left\{0, d \, \left( \gamma - \frac{1}{p} \right) \right\}
	< \sigma
	< \ell + \alpha.
\end{equation}
From $d(\gamma-1/p)<\sigma$ we particularly infer that $\gamma < \tau^{-1}$, i.e., $\gamma\tau < 1$, for this choice of $\tau$.
Therefore, from the considerations stated above we conclude that
\begin{align*}
	\sum_{j\in\N_0} \sum_{(I,\eta)\in\Lambda^\mathrm{int}_j} \abs{\distr{u}{\eta_{I,p'}}}^\tau
	&\lesssim \abs{u}_{C^{\ell,\alpha}_{\gamma,\loc}}^\tau \sum_{j\in\N_0} 2^{-j(\ell+\alpha+d/p-\gamma)\tau+j(d-1)+j(1-\gamma\tau)} \\
	&= \abs{u}_{C^{\ell,\alpha}_{\gamma,\loc}}^\tau \sum_{j\in\N_0} \left( 2^{d-(\ell+\alpha+d/p)\tau} \right)^j \\
	&\lesssim \abs{u}_{C^{\ell,\alpha}_{\gamma,\loc}}^\tau,	
\end{align*}
because the sum in the second line converges for $d-(\ell+\alpha+d/p)\tau<0$ which is equivalent to $\sigma<\ell+\alpha = \sigma^*$.
Similar to the end of Step 3, we note that the double sum on the left-hand side is equivalent to $\norm{u_2 \sep B^{\sigma}_\tau(L_\tau(\R^d))}^\tau$ such that \link{est:interior} follows (in the case of small $\gamma$) for all~$\sigma$ that satisfy \link{def:range_alpha}.
Note that if $\gamma > 1/p$, then the maximum in \link{def:range_alpha} is strictly positive. The result \link{est:interior} for $\sigma>0$ below this value can be deduced from the assertion we just proved by means of the standard embedding along the adaptivity scale:
\begin{equation*}
	B^{\sigma_2}_{\tau_2}(L_{\tau_2}(\R^d))
	\hookrightarrow B^{\sigma_1}_{\tau_1}(L_{\tau_1}(\R^d))
	\qquad \text{for all} \qquad \sigma_2 \geq \sigma_1 > 0,
\end{equation*}
where $1/\tau_i = \sigma_i/d+1/p$ for each $i\in\{1,2\}$.

\emph{Substep 4.2 (Large $\gamma$)}. We turn to the case
\begin{equation*}
	\frac{\ell+\alpha}{d} + \frac{1}{p} 
	\leq \gamma
	< \ell + \alpha + \frac{1}{p}.
\end{equation*}
As mentioned right after the statement of \autoref{theorem:embedding}, for $\gamma$ in this range we have that
\begin{equation*}
	\sigma^*
	= \frac{d}{d-1}\left(\ell+\alpha+ \frac{1}{p}-\gamma \right)
	\leq \ell+\alpha .
\end{equation*}
The lower bound for $\gamma$ thus implies that $\sigma^*\leq d\,\gamma-d/p$. Therefore, for every $0<\sigma<\sigma^*$ the corresponding $\tau$ in the adaptivity scale satisfies
\begin{equation*}
	\frac{1}{p} < \frac{1}{\tau} = \frac{\sigma}{d} + \frac{1}{p} < \gamma,
\end{equation*}
i.e., $\gamma\tau>1$. Hence, proceeding as in the previous substep yields
\begin{align*}
	\sum_{j\in\N_0} \sum_{(I,\eta)\in\Lambda^\mathrm{int}_j} \abs{\distr{u}{\eta_{I,p'}}}^\tau
	&\lesssim \abs{u}_{C^{\ell,\alpha}_{\gamma,\loc}}^\tau \sum_{j\in\N_0} 2^{-j(\ell+\alpha+d/p-\gamma)\tau+j(d-1)} \\
	&= \abs{u}_{C^{\ell,\alpha}_{\gamma,\loc}}^\tau \sum_{j\in\N_0} \left( 2^{d-1-\tau(\ell+\alpha+d/p-\gamma)} \right)^j
	\lesssim \abs{u}_{C^{\ell,\alpha}_{\gamma,\loc}}^\tau,	
\end{align*}
where this time the sum over $j$ converges if $d-1-\tau(\ell+\alpha+d/p-\gamma)<0$ which is (for the assumed range of $\gamma$) equivalent to $\sigma<\sigma^*$.
Since this implies the desired estimate \link{est:interior}, finally, the proof is complete.
\end{proof}

\begin{Rem}
The interested reader might ask what happens if $\gamma \geq \ell+\alpha+1/p$. For
$\gamma \geq \ell + \alpha + d/p$ the sum over \link{est:int} w.r.t.\ $j\in\N_0$ can never be convergent, because due to $\tau > 0$ the exponent $-j(\ell+\alpha+d/p-\gamma)\tau + j(d-1)$ would be non-negative for all $j$ and the sum over $t$ is bounded from below by $1$. Hence, we are left with $\ell+\alpha+1/p \leq \gamma < \ell+\alpha + d/p$. Choosing $\tau>0$ such that $\gamma \leq 1/\tau$ then implies $\sigma \geq d(\ell+\alpha)$ for $\sigma$ in the adaptivity scale. On the other hand, $\sigma<\ell+\alpha$ would be necessary for the geometric series to converge; see Substep 4.1.
In contrast, if we choose $\tau>0$ such that $\gamma > 1/\tau$, then convergence is equivalent to $\sigma<\frac{d}{d-1}(\ell+\alpha+1/p-\gamma)$ which contradicts $\sigma>0$ for the range of $\gamma$ under consideration.
\end{Rem}

\section{Besov regularity}
\label{sec:Besov_reg}
This section is concerned with the regularity of solutions to the $p$-Poisson equation \eqref{eq:p-Poisson}, $1<p<\infty$, in the adaptivity scale of Besov spaces $B^{\sigma}_{\tau}(L_{\tau}(\Omega))$, $1/ \tau = \sigma/d + 1/p$. 
In \autoref{subsec:p-Poisson} we deal with the general case of multidimensional, bounded Lipschitz domains.
The main result of this part, \autoref{thm:generic_besov}, describes (generic) sufficient conditions on the parameters of locally weighted H\"older spaces which ensure that the Besov regularity of all solutions $u$ to \eqref{eq:p-Poisson} that are contained in such spaces exceeds the Sobolev smoothness of $u$.
\autoref{subsec:p-Poisson_d=2} then is devoted to problems on two-dimensional domains, since there many more results concerning local H\"older regularity are available in the literature. 
Among other things, in this subsection, we state and prove explicit Besov regularity assertions for the unique solution to the $p$-Poisson equation \eqref{eq:p-Poisson}, with a right-hand in $L_q(\Omega)$, $q\geq p'$, which satisfies a homogeneous Dirichlet boundary condition. 
These statements constitute the main results of the present paper. 
In \autoref{thm:besov_reg_2d_lip_max} we deal with general bounded Lipschitz domains $\Omega\subset\R^2$, whereas \autoref{theorem:Besov_reg_2d} contains the results for the special case of bounded polygonal domains.

Existence and uniqueness of weak solutions to all problems we are going to consider is guaranteed by the following fairly general result which is well-known in the literature. Its proof can be found, e.g., in Lions \cite[Chapter 2]{Lio1969}.

\begin{Prop}[Existence and uniqueness] \label{Prop:existence}
	For $d\geq 2$ let $\Omega \subset \R^d$ denote a bounded domain and let $1 < p < \infty$. 
	Moreover, assume $f \in W^{-1}(L_{p'}(\Omega))$, as well as $g \in W^1(L_p(\Omega))$. 
	Then the problem
	\begin{align}
		\begin{split}
			-\div{\lvert \nabla u \rvert^{p-2} \nabla u} = f \;\;\quad \text{in} \quad \Omega, \\
			 u-g  \in W^1_0(L_p(\Omega)), \label{eq:nonhom_dirichlet_condition}
		\end{split}
	\end{align}
	admits a unique weak solution $u \in W^1(L_p(\Omega))$.
\end{Prop}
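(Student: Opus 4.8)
The plan is to prove this via the direct method in the calculus of variations, after first reducing to homogeneous boundary data. Setting $w=u-g$, solving \link{eq:nonhom_dirichlet_condition} for $u\in W^1(L_p(\Omega))$ is equivalent to finding $w\in W^1_0(L_p(\Omega))$ with $-\div{\abs{\nabla(w+g)}^{p-2}\nabla(w+g)}=f$ in $\D'(\Omega)$, i.e.\ such that
\[
	\int_\Omega \distr{\abs{\nabla(w+g)}^{p-2}\nabla(w+g)}{\nabla v}\d x = \distr{f}{v}
	\qquad \text{for all }v\in C_0^\infty(\Omega),
\]
and hence, by density, for all $v\in W^1_0(L_p(\Omega))$. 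I would then introduce the energy functional $J\colon W^1_0(L_p(\Omega))\to\R$,
\[
	J(w) = \frac1p \int_\Omega \abs{\nabla(w+g)}^p\d x - \distr{f}{w},
\]
which is finite on all of $W^1_0(L_p(\Omega))$ since $g\in W^1(L_p(\Omega))$ and $f\in W^{-1}(L_{p'}(\Omega))$, and show that $u=w_*+g$ solves the problem iff $w_*$ minimizes $J$.

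Next I would verify the three properties that drive the direct method. \emph{Coercivity}: on the bounded domain $\Omega$ the Poincar\'e inequality makes $\norm{\nabla\cdot\sep L_p(\Omega)}$ an equivalent norm on $W^1_0(L_p(\Omega))$; expanding $\abs{\nabla w+\nabla g}^p$, using the triangle inequality in $L_p$, the bound $\distr{f}{w}\le\norm{f\sep W^{-1}(L_{p'}(\Omega))}\,\norm{\nabla w\sep L_p(\Omega)}$, and Young's inequality with a small parameter, one obtains $J(w)\ge c_1\norm{\nabla w\sep L_p(\Omega)}^p-c_2$ for some $c_1>0$ and $c_2\in\R$ depending only on $p,\Omega,f,g$, so $J(w)\to\infty$ as $\norm{w\sep W^1_0(L_p(\Omega))}\to\infty$. \emph{Strict convexity}: the map $\R^d\ni\xi\mapsto\abs{\xi}^p$ is strictly convex for $1<p<\infty$, hence so is $v\mapsto\int_\Omega\abs{v}^p\d x$ on $L_p(\Omega;\R^d)$; since $w\mapsto\nabla(w+g)$ is affine and injective ($\nabla w=0$ forces $w=0$ in $W^1_0$), $J$ is strictly convex. \emph{Weak lower semicontinuity}: $J$ is convex and norm-continuous on the reflexive space $W^1_0(L_p(\Omega))$, hence weakly sequentially lower semicontinuous.

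Existence then follows in the standard way: a minimizing sequence $(w_n)$ is bounded by coercivity, $W^1_0(L_p(\Omega))$ is reflexive for $1<p<\infty$, so along a subsequence $w_n\rightharpoonup w_*$, and weak lower semicontinuity gives $J(w_*)\le\liminf_n J(w_n)=\inf J$; thus $w_*$ is a minimizer, and it is unique by strict convexity. Finally, $J$ is Gateaux differentiable with derivative
\[
	\frac{\mathrm{d}}{\mathrm{d}t}J(w_*+tv)\Big|_{t=0}
	= \int_\Omega \distr{\abs{\nabla(w_*+g)}^{p-2}\nabla(w_*+g)}{\nabla v}\d x - \distr{f}{v},
\]
(justified by the growth bound $\abs{\,\abs{\xi}^{p-2}\xi\,}\lesssim\abs{\xi}^{p-1}$ and dominated convergence), and since $J$ is convex its minimizer is exactly the set of zeros of this derivative; this is the weak formulation above with $u=w_*+g$. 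Conversely every weak solution is a critical point of the convex functional $J$, hence its unique minimizer, which gives uniqueness of $u$ in $W^1(L_p(\Omega))$.

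I expect the main technical obstacle to be the coercivity estimate in the presence of a nonzero boundary datum $g$: one must absorb the cross terms from $\abs{\nabla w+\nabla g}^p$ and the linear term $\distr{f}{w}$ into the leading $\norm{\nabla w\sep L_p(\Omega)}^p$ via Young's inequality with a suitably tuned small parameter, keeping in mind that $\norm{\nabla\cdot\sep L_p}$ controls the full $W^1_0$-norm only through Poincar\'e. A secondary point needing care is the Gateaux differentiability of the energy and the identification of its derivative with the stated monotone operator. As an alternative route one can bypass the energy altogether and apply the Browder--Minty theorem to $A\colon W^1_0(L_p(\Omega))\to W^{-1}(L_{p'}(\Omega))$, $\distr{Aw}{v}=\int_\Omega\distr{\abs{\nabla(w+g)}^{p-2}\nabla(w+g)}{\nabla v}\d x$, using the elementary vector inequality $\distr{\abs{\xi}^{p-2}\xi-\abs{\eta}^{p-2}\eta}{\xi-\eta}>0$ for $\xi\neq\eta$ for strict monotonicity (hence uniqueness), together with coercivity and demicontinuity for existence; this needs essentially the same coercivity computation but avoids differentiating $J$.
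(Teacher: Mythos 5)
Your argument is correct, but it is worth pointing out that the paper does not prove this proposition at all: it is quoted as a known result, with the proof delegated to Lions \cite[Chapter 2]{Lio1969}, where existence and uniqueness for the $p$-Laplacian with the inhomogeneity reduced to the right-hand side are obtained through the theory of monotone operators (coercivity, monotonicity and hemicontinuity of $A w = -\div{\abs{\nabla(w+g)}^{p-2}\nabla(w+g)}$, i.e.\ essentially the Browder--Minty framework you sketch as your alternative route). Your primary route is instead the direct method of the calculus of variations, which is legitimate here precisely because the $p$-Laplacian is a potential operator: the $p$-Dirichlet energy $J$ is finite, coercive (after absorbing the boundary datum $g$ and the pairing with $f\in W^{-1}(L_{p'}(\Omega))$ via Poincar\'e and Young), strictly convex since $\xi\mapsto\abs{\xi}^p$ is strictly convex for $1<p<\infty$ and $w\mapsto\nabla(w+g)$ is affine and injective on $W^1_0(L_p(\Omega))$, and weakly lower semicontinuous as a convex, norm-continuous functional on a reflexive space; minimizers exist and are unique, and the Gateaux derivative computation identifies them with weak solutions in both directions. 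What each approach buys: the variational method gives uniqueness almost for free from strict convexity and keeps the whole proof elementary and self-contained, whereas the monotone-operator method of the cited reference does not require the operator to be a gradient and therefore extends to a much wider class of quasilinear problems; for the specific statement in the paper both are complete proofs, and your write-up contains the two standard technical checks (coercivity with nonzero $g$, differentiation of $J$ under the integral via the bound by $(\abs{\nabla(w+g)}+\abs{\nabla v})^{p-1}\abs{\nabla v}$ and H\"older) that make the direct method rigorous.
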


\begin{Rem}
Note that, since we like to deal with bounded Lipschitz domains $\Omega$ and $q \geq p'$, the chain of embeddings 
	\begin{equation*}
		L_q(\Omega)\hookrightarrow L_{p'}(\Omega)\hookrightarrow W^{-1}(L_{p'}(\Omega))
	\end{equation*}
	 together with \autoref{Prop:existence} (applied for $g\equiv 0$) guarantees that there is at least one $u\in W^{1}(L_p(\Omega))$ that solves the $p$-Poisson equation \link{eq:p-Poisson} with $f \in L_q(\Omega)$.
\end{Rem}

In order to prove non-trivial Besov regularity results, we will make use of the general embedding \autoref{theorem:embedding}. 
For that reason, we need to determine preferably small spaces $B^{s}_p(L_p(\Omega))$ and $C^{\ell,\alpha}_{\gamma, \loc}(\Omega)$ which still contain the solution $u$ to the respective problem under consideration. 
Clearly, smoothness results w.r.t.\ the Besov scale $B^{s}_p(L_p(\Omega))$ can be derived easily from corresponding Sobolev regularity assertions using the intimate relation of Sobolev and Besov spaces described in \autoref{rem:Besov_prop}(iv). 
The local H\"older regularity of solutions to the $p$-Poisson equation (\ref{eq:p-Poisson}), as well as to more general quasi-linear elliptic problems, was studied in several papers. 
We refer, e.g., to Ural'ceva \cite{Ura1968}, Uhlenbeck \cite{Uhl1977}, Evans \cite{Eva1982}, Lewis~\cite{Lew1983}, DiBenedetto~\cite{DiB1983}, Tolksdorf \cite{Tol1984}, Diening, Kaplick{\'y} and Schwarzacher \cite{DieKapSch2012}, Kuusi and Mingione \cite{KuuMin2014}, as well as to Teixeira \cite{Tei2014}.
The subsequent proposition can be derived as a special case from \cite[Corollary 5.5]{DieKapSch2012} (see also \cite[Remark 5.7]{DieKapSch2012}).

\begin{Prop}[$C^{1,\alpha}_{\loc}(\Omega)$ regularity] \label{Prop:hoelder_reg}
For $d\geq 2$ let $\Omega \subset \mathbb{R}^d$ denote any bounded domain, let $1 < p < \infty$, and $q > d$. Then there exists $\alpha \in (0,1)$ such that all $u \in W^1(L_p(\Omega))$ which are weak solutions to \eqref{eq:p-Poisson} with  $f \in L_q(\Omega)$ belong to $C^{\ell,\alpha}_{\loc}(\Omega)$.
\end{Prop}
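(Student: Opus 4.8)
The plan is to obtain the assertion by specializing the interior regularity theory for quasilinear elliptic equations with $p$-growth, in the form developed by Diening, Kaplick\'y and Schwarzacher~\cite{DieKapSch2012}. First I would observe that the vector field $\mathbf{A}(\xi)=\abs{\xi}^{p-2}\xi$ governing~\eqref{eq:p-Poisson} is the model operator satisfying the structural assumptions used there (strong monotonicity, continuity, and two-sided $p$-growth/ellipticity bounds), so that any weak solution $u\in W^1(L_p(\Omega))$ of~\eqref{eq:p-Poisson} falls within the scope of that reference; concretely this amounts to checking, once and for all, that $\mathbf{A}$ has the required $(p,\cdot)$-structure uniformly on $\Omega$.

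Second, I would match the hypothesis on the right-hand side: the relevant statement in~\cite{DieKapSch2012} asks $f$ to lie in a Lebesgue (or Morrey) space forcing a sufficiently fast decay of the excess of $\nabla u$ on shrinking balls, and the threshold producing H\"older continuity of $\nabla u$ is exactly $f\in L_q$ with $q>d$ (the integrability for which the first-order Riesz potential of $f$ is locally H\"older continuous). Then \cite[Corollary~5.5]{DieKapSch2012}, together with the localized variant recorded in~\cite[Remark~5.7]{DieKapSch2012}, yields for every ball $B_r(x_0)$ with $B_{2r}(x_0)\subset\subset\Omega$ a bound on $\norm{u\sep C^{1,\alpha}(\overline{B_r(x_0)})}$ in terms of $\norm{\nabla u\sep L_p(B_{2r}(x_0))}$ and $\norm{f\sep L_q(B_{2r}(x_0))}$, with an exponent $\alpha=\alpha(d,p,q)\in(0,1)$ and an implicit constant that are independent of $x_0$, $r$ and $u$.

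Third, I would promote this ball-wise information to membership in $C^{1,\alpha}_{\loc}(\Omega)$ in the sense of \autoref{subsec:Hoelder}. Given any compact $K\subset\subset\Omega$, cover it by finitely many balls $B_{r_i}(x_i)$ with $B_{2r_i}(x_i)\subset\subset\Omega$; the estimate above shows that $\nabla u$ is bounded and uniformly continuous on $K$ and controls $\abs{\nabla u(x)-\nabla u(y)}/\abs{x-y}^{\alpha}$ for all $x,y$ lying in a common ball $B_{r_i}(x_i)$, while for the remaining pairs $x,y\in K$ the distance $\abs{x-y}$ is bounded below by the Lebesgue number of the cover, so that the same quotient is controlled by a constant times $\sup_{z\in K}\abs{\nabla u(z)}$. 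Hence $u\in C^{1,\alpha}(K)$ for every admissible $K$, which is precisely the claim.

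The main obstacle lies in the first two steps: verifying that~\eqref{eq:p-Poisson} with an $L_q$-source genuinely meets the (rather technical) hypotheses of~\cite[Corollary~5.5]{DieKapSch2012} and, above all, extracting from its proof a single exponent $\alpha$ valid simultaneously on all interior balls; the covering and patching in the third step is then routine. One should also keep in mind that the $\alpha$ produced this way emerges non-constructively from a Campanato-type iteration and is, in general, far below the value $\min\{1,1/(p-1)\}$ available in the homogeneous case $f\equiv0$ treated by Uhlenbeck~\cite{Uhl1977} and others --- here only its existence is asserted.
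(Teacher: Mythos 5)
Your proposal follows essentially the same route as the paper: the paper establishes \autoref{Prop:hoelder_reg} precisely by invoking \cite[Corollary~5.5]{DieKapSch2012} together with \cite[Remark~5.7]{DieKapSch2012} as a special case, which is exactly the reduction you describe. Your additional details (checking the $p$-structure of $\mathbf{A}(\xi)=\abs{\xi}^{p-2}\xi$, the $q>d$ threshold, and the covering argument passing from interior balls to arbitrary compact $K\subset\subset\Omega$) merely flesh out what the paper leaves to the cited reference, so there is no substantive difference in approach.
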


\begin{Rem}\label{rem:sharp_hoelder}
It is well-known that, for $p>2$, solutions to (\ref{eq:p-Poisson}) do not possess continuous second derivatives in general, even if $f$ is smooth. For instance, a weak solution to the equation
	\begin{equation*}
	\div{\lvert \nabla u \rvert^{p-2} \nabla u} = 1 \quad \text{on} \quad \mathring{B}_1(0)
	\end{equation*}
	is given by
	\begin{equation*}
	u(x_1,\ldots,x_d)= \frac{p}{p-1}\abs{x_1}^{p/(p-1)}, 
	\end{equation*}
	see \cite[Proposition 5.4]{Sci2014} and \cite{LinLin2013}. Hence, in this respect $\ell=1$ in \autoref{Prop:hoelder_reg} is sharp at least for $p>2$.
\end{Rem}
Here and in what follows we shall say a given problem is of \emph{sharp regularity~$\alpha$} if $\alpha$ is a lower bound for the smoothness (measured in a certain scale) of \emph{all} solutions to \emph{any} problem instances (e.g., for all Lipschitz domains $\Omega$ and each $f\in L_{p'}(\Omega)$), but for every $\varepsilon>0$ there exists a problem instance such that its corresponding solution has a regularity strictly less than $\alpha^* + \varepsilon$.

\subsection{The $p$-Poisson equation in arbitrary dimensions} \label{subsec:p-Poisson}

Regularity results for partial differential equations are usually stated in terms of shift theorems. 
Concerning the $p$-Poisson equation \link{eq:p-Poisson} with homogeneous Dirichlet boundary conditions, 
\begin{align}
	\begin{split}
		-\div{\lvert \nabla u \rvert^{p-2} \nabla u} &= f 			\quad \text{in } \Omega, \\
		u &= 0 \quad \text{on } \partial \Omega, 		
		\label{eq:dirichlet_condition}
	\end{split}
\end{align}
and the scale of Sobolev spaces $W^{s}(L_{p}(\Omega))$ one such result is due to Savar{\'e} \cite[Theorems 2 and 2']{Sav1998}:

\begin{Prop}[Sobolev regularity on Lipschitz domains] \label{Prop:sobolev_reg_poisson_savare}
For $d\geq 2$ let $\Omega \subset \R^d$ be a bounded Lipschitz domain. 
Given $1<p<\infty$ and $f\in W^{-1}(L_{p'}(\Omega))$ let $u \in W_0^1(L_p(\Omega))$ denote the unique solution to \link{eq:dirichlet_condition}. 
Then, for $\theta\in[0,1)$,
\begin{equation}\label{cond:f}
	f \in W^{t_\theta}(L_{p'}(\Omega)) 
	\quad \text{with} \quad 
	t_\theta 
	= \begin{cases}
		-1+\theta/2 &\quad \text{if} \quad 1<p\leq 2,\\
		-1 + \theta/p' &\quad \text{if} \quad 2<p< \infty 
	\end{cases}
\end{equation}
implies that
\begin{equation*}
	u\in W^{s_\theta}(L_p(\Omega)) \quad \text{with} \quad 
	s_\theta 
	= \begin{cases} 
		1 +\theta/2  &\quad \text{if} \quad 1<p\leq 2,\\
		1 + \theta/p &\quad \text{if} \quad 2<p< \infty.
	\end{cases}
\end{equation*}
\end{Prop}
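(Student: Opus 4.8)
The plan is to prove this shift estimate by the translation (difference‑quotient) method of Savar\'e \cite{Sav1998}, which rests on the monotone structure of $A(\xi)=\abs{\xi}^{p-2}\xi$. The algebraic ingredients I would isolate first are: writing $V(\xi)=\abs{\xi}^{(p-2)/2}\xi$, one has $\distr{A(\xi)-A(\eta)}{\xi-\eta}\sim\abs{V(\xi)-V(\eta)}^2\sim(\abs{\xi}+\abs{\eta})^{p-2}\abs{\xi-\eta}^2$, together with $\abs{A(\xi)-A(\eta)}\lesssim(\abs{\xi}+\abs{\eta})^{p-2}\abs{\xi-\eta}$ and, for $p\geq 2$, the reinforcement $\abs{V(\xi)-V(\eta)}^2\gtrsim\abs{\xi-\eta}^p$. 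Testing \link{eq:dirichlet_condition} with $u$ itself gives the base energy bound $\norm{\nabla u\sep L_p(\Omega)}^{p}\lesssim\norm{f\sep W^{-1}(L_{p'}(\Omega))}^{p'}$, i.e.\ the case $\theta=0$.

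Since $\partial\Omega$ is Lipschitz, I would localize: cover $\overline{\Omega}$ by finitely many charts in which a bi‑Lipschitz change of variables flattens the boundary to a hyperplane, and use a subordinate partition of unity. In an interior chart all difference quotients $\Delta_h$ are admissible; in a boundary chart only the $d-1$ tangential directions are, because along these the flattened boundary and the homogeneous Dirichlet condition are preserved, so that $\Delta_{-h}\Delta_h(\chi^2 u)$ with $\chi$ a cutoff is a legitimate element of $W^1_0(L_p)$. After transport the operator is no longer the pure $p$‑Laplacian but a quasilinear operator with the same $p$‑growth and the same monotonicity constants up to the Lipschitz norm of the chart, which is all that enters.

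For the core estimate I would test the localized, flattened equation with $\Delta_{-h}\Delta_h u$ for tangential $h$ and transfer one of the two differences onto $f$. Using the first algebraic fact on the left and the duality pairing on the right, and absorbing the commutator and cutoff terms into the base energy, this yields
\begin{equation*}
	\int (\abs{\nabla u(\cdot+h)}+\abs{\nabla u})^{p-2}\,\abs{\Delta_h\nabla u}^2
	\;\lesssim\; \norm{\Delta_h f\sep W^{-1}(L_{p'})}\cdot\norm{\Delta_h u\sep W^1_0(L_p)} + (\text{lower order}).
\end{equation*}
Since $f\in W^{t_\theta}(L_{p'})$ one has $\norm{\Delta_h f\sep W^{-1}(L_{p'})}\lesssim\abs{h}^{1+t_\theta}$, which is $\abs{h}^{\theta/2}$ when $1<p\leq2$ and $\abs{h}^{\theta/p'}$ when $2<p<\infty$. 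For $p\geq2$ the left‑hand side already dominates $\int\abs{\Delta_h\nabla u}^p$; for $p\leq2$ one recovers it by H\"older with exponents $2/p$ and $2/(2-p)$ after writing $\abs{\Delta_h\nabla u}^p=[(\abs{\nabla u(\cdot+h)}+\abs{\nabla u})^{p-2}\abs{\Delta_h\nabla u}^2]^{p/2}[(\abs{\nabla u(\cdot+h)}+\abs{\nabla u})^p]^{(2-p)/2}$ and controlling the last factor by $\norm{\nabla u\sep L_p}^{p(2-p)/2}$. Inserting the bound $\norm{\Delta_h u\sep W^1_0(L_p)}\lesssim\abs{h}^{\sigma}$ valid once $u\in W^{1+\sigma}(L_p)$ is known and iterating, the resulting recursion on $\sigma$ is a contraction with fixed point $\theta/2$ (for $p\leq2$) resp.\ $\theta/p$ (for $p\geq2$); summing the tangential gains over the charts and combining with the interior estimate gives $u\in W^{1+\theta/2}(L_p)$ resp.\ $u\in W^{1+\theta/p}(L_p)$, up to the fine index.

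The delicate points — and the main obstacle — are the following two. First, the boundary localization: one must verify that the commutator between the tangential $\Delta_h$ and the Lipschitz chart, and the first‑order terms produced by differentiating $\chi^2 u$, are controlled by the already‑available lower‑order norms and can genuinely be absorbed; this is where Lipschitz (as opposed to $C^1$) regularity of $\partial\Omega$ forces the restriction to tangential directions, the normal regularity being recovered a posteriori by solving the equation for the normal flux $\partial_n(\abs{\nabla u}^{p-2}\partial_n u)$. Second, landing in the sharp space $W^{s_\theta}=B^{s_\theta}_{p,p}$ rather than only in the Nikolski\u{\i} space $B^{s_\theta}_{p,\infty}$ that a pointwise‑in‑$h$ estimate yields: here I would run the difference‑quotient estimate in integrated form over dyadic $h$‑annuli, exploiting the $\ell^2$‑summability across scales hidden in the monotonicity inequality, and then close by real interpolation between the datum $u\in W^1_0(L_p)$ and the open‑range regularity just obtained. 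This interpolation and bookkeeping step — essentially the functional‑analytic core of \cite{Sav1998} — is the technically most demanding part of the argument.
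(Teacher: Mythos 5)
The paper itself does not prove this proposition: it is quoted verbatim from Savar\'e \cite[Theorems 2 and 2']{Sav1998}, so your sketch has to be measured against Savar\'e's actual argument. Your formal bookkeeping (the algebraic inequalities for $A$ and $V$, the base energy estimate, the exponent recursion with fixed point $\theta/2$ resp.\ $\theta/p$, and the final passage from a Nikolski\u{\i}-type bound to $W^{s_\theta}$ for $\theta<1$) is consistent, but the central step of your plan fails. Flattening a merely Lipschitz boundary by a bi-Lipschitz chart produces a transformed operator whose coefficients involve $\nabla\varphi$ of the Lipschitz graph $\varphi$, hence are only in $L_\infty$. When you apply a tangential difference quotient to the localized equation, the commutator terms contain $\Delta_h$ of these coefficients, and for a Lipschitz (as opposed to $C^{0,1}$-differentiable, i.e.\ $C^{1,1}$ or at least $C^1$) chart these are $O(1)$ in $L_\infty$, not $O(\abs{h})$: they are of exactly the same size as the leading term and cannot be ``absorbed into the base energy''. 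This is not a technical nuisance but the reason why the naive translation method gives nothing beyond $W^1(L_p)$ on Lipschitz domains, and why the sharp exponents $3/2$ and $1+1/p$ are non-trivial. Savar\'e's proof avoids flattening altogether: after localization he uses that near a boundary point a Lipschitz domain is mapped into itself by translations in a fixed open cone of directions, compares $u$ directly with these inner translates through the minimality/monotonicity of the energy functional (so the boundary chart is never differentiated), obtains a gain of half an order in the resulting Besov seminorm, and recovers full (not only tangential) regularity because difference quotients along a spanning cone of directions already characterize the Besov seminorm; the intermediate cases $\theta\in[0,1)$ then follow from his nonlinear interpolation theorem, not from a fixed-point iteration on $\sigma$.

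Two further assertions in your sketch would also need genuine justification and are not available in this setting: recovering the normal regularity ``a posteriori by solving the equation for the normal flux'' presupposes enough smoothness of the boundary and non-degeneracy of the operator to make sense of that decomposition, which fails for the degenerate $p$-Laplacian on a Lipschitz boundary; and the upgrade from $B^{s_\theta}_\infty(L_p)$ to $W^{s_\theta}(L_p)$ is indeed harmless for $\theta$ strictly below the limiting value, but the ``$\ell^2$-summability across dyadic scales hidden in the monotonicity inequality'' that you invoke for the endpoint is precisely the part of \cite{Sav1998} you would have to reproduce, not assume. As it stands, the proposal reproduces the smooth-domain (or $C^{1,1}$-chart) difference-quotient argument and acknowledges, but does not resolve, the Lipschitz obstruction that the cited theorem is actually about.
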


\begin{Rem}\label{rem:sharp}
In \cite[Remark 4.3]{Sav1998} Savar{\'e} states that the regularity results given in \autoref{Prop:sobolev_reg_poisson_savare} are sharp (in the sense defined above), even for the class of smooth domains.
\end{Rem}

Observe that $L_{p'}(\Omega) \hookrightarrow W^{t_\theta}(L_{p'}(\Omega))$ for all $\theta$ under consideration.
Hence, provided that $f \in L_{p'}(\Omega)$, the preceding \autoref{Prop:sobolev_reg_poisson_savare} shows that the unique solution to \link{eq:dirichlet_condition} is contained in $W^{s}(L_p(\Omega))$ and in $B_p^{s}(L_p(\Omega))$, respectively, for all $s<s^*$, where we set
\begin{equation}\label{eq:max_sobolev_reg}
		s^*= \begin{cases} 
			3/2 		\quad &\text{if} \quad 1 < p \leq 2, \\
			1 + 1/p		\quad &\text{if} \quad 2 < p <\infty.
		 \end{cases}
\end{equation}
Moreover, let us mention that Savar{\'e} actually proved (for an even larger class of equations and slightly weaker assumptions on $f$) that we may replace $B_p^{s}(L_p(\Omega))$, $s<s^*$, by $B^{s^*}_\infty(L_p(\Omega))$.
However, this slightly stronger assertion would not provide any gain in what follows.

\begin{Rem}\label{rem:sharp_sobolev}
In addition to \autoref{rem:sharp} we state that there are good reasons to assume that $s^*$ given in \link{eq:max_sobolev_reg} defines a sharp bound for the Sobolev regularity of solutions $u$ to \link{eq:dirichlet_condition}, even for much smoother right-hand sides $f$.
First of all, this conjecture is supported by the well-known fact that there exist Lipschitz domains $\Omega$ such that the solution for $p=2$ and some $f\in C^{\infty}(\overline{\Omega})$ does not belong to any $W^{3/2+\varepsilon}(L_2(\Omega))$, $\varepsilon>0$; see, e.g., Jerison and Kenig \cite[Theorem~A]{JerKen1995}. 
Moreover, for $d=2$ and $p>2$ it can be seen easily that $s^*=1+1/p$ can not be improved for general Lipschitz domains, as the following example shows: 
Given $\omega\in(0,2\pi)$ let
\begin{equation*}
	\mathfrak{C}(\omega)=\left\{ (r,\theta)\in [0,\infty) \times [0,2\pi] \sep  0<r<1 \text{ and } 0<\theta<\omega \right\}
\end{equation*}
denote an (open) circular sector of radius $1$ which is centered at the origin and possesses a central angle $\omega$.
Then, by \cite[Theorem 3]{Dob1989} (see also \cite{Aro1986}), there exist $\alpha(\omega)>0$ which can be computed explicitly and some function $t$ such that, under quite mild conditions on the right-hand side $f$, for every solution $u$ to \eqref{eq:dirichlet_condition} in $\Omega=\mathfrak{C}(\omega)$ there exist a positive constant $k$ and a function $v$ such that
\begin{align}
\label{eq:counter_1}
	u(r,\theta)=k \cdot r^{\alpha(\omega)} t(\theta) + v(r,\theta), \qquad (r,\theta)\in\mathfrak{C}(\omega),
\end{align} 
where $v$ fulfills
\begin{align}
\label{eq:counter_2}
	\abs{v(r,\theta)}\lesssim r^{\alpha(\omega)+\eta} \qquad \text{and} \qquad \abs{\nabla v(r,\theta)} \lesssim r^{\alpha(\omega)+\eta} 
\end{align}
for some absolute constant $\eta >0$. It follows from \eqref{eq:counter_1}, \eqref{eq:counter_2}, and the special structure of $t(\theta)$, cf.\ \cite[Theorem 1]{Dob1989}, that $\abs{\nabla u(r,\theta)} \sim r^{\alpha(\omega)-1}$ near the origin. 
Therefore $\abs{\nabla u} \in L_{\mu}(\mathfrak{C}(\omega))$ can hold true only if $\mu\cdot (\alpha(\omega)-1) > - 2$.
On the other hand, the behaviour of $\alpha(\omega)$ for large central angles $\omega$, is known: It has been shown that
\begin{align}
\label{eq:limalpha}
\lim_{\omega \rightarrow 2\pi} \alpha(\omega)=\frac{p-1}{p}.
\end{align}
Hence, by $\eqref{eq:limalpha}$, for every $\mu>2p$ there exists a two-dimensional Lipschitz domain $\Omega=\mathfrak{C}(\omega)$ and a solution $u$ to \eqref{eq:dirichlet_condition} such that $\abs{\nabla u}$ does not belong to $L_{\mu}(\Omega)$.
Consequently, for this solution Sobolev's embedding yields that $\abs{\nabla u}$ is not contained in $W^{1/p+\varepsilon}(L_p(\Omega))$ for any $\varepsilon>0$  and thus
\begin{align}\label{eq:u_notin_w}
	u \notin W^{1+1/p+\varepsilon}(L_p(\Omega)).
\end{align}
Finally, let us remark that for the open circular sector with $\omega=2\pi$ the same arguments yield \link{eq:u_notin_w} with $\varepsilon=0$. 
However, note that then $\Omega=\mathfrak{C}(2\pi)$ is not a Lipschitz domain anymore.
\end{Rem}

Unfortunately, if $d\geq 3$, then (to our best knowledge) finding the sharp local H\"older regularity $\alpha$ of solutions to \link{eq:p-Poisson}, \link{eq:nonhom_dirichlet_condition}, or \link{eq:dirichlet_condition}, respectively, still is an open problem. 
Moreover, in the articles mentioned before the statement of \autoref{Prop:hoelder_reg}, there appear too many unspecified constants that do not seem to allow estimates for the local H\"older semi-norms which are sufficient for our purposes, i.e., to obtain a satisfactory bound for the parameter~$\gamma$. In contrast, for the case $d=2$ much more explicit results are available such that these two drawbacks can be resolved. 
Consequently, we present a detailed discussion of the two-dimensional case in \autoref{subsec:p-Poisson_d=2}. 
To conclude the current subsection, at least we want to determine the \emph{range} of the parameters $\alpha$ and $\gamma$ for which the Besov regularity of the solution~$u$ (in the general multidimensional setting) \emph{would} exceed its Sobolev regularity.

\begin{theorem}\label{thm:generic_besov}
	For $d\geq 2$ let $\Omega \subset \mathbb{R}^d$ denote a bounded Lipschitz domain. 
	Moreover, for $1 < p < \infty$ and $f \in W^{-1}(L_{p'}(\Omega))$ let $u$ be a weak solution to \link{eq:p-Poisson} which satisfies $u \in W^s(L_p(\Omega))$ for all $s<\overline{s}\in[\ell,\ell+1)$ with some $\ell\in\N$.
	If, additionally, $u$ is contained in $C^{\ell,\alpha}_{\gamma, \loc}(\Omega)$ with
	\begin{align}\label{ineq:range_alpha_gamma}
		\overline{s}-\ell < \alpha \leq 1
		\qquad \text{and} \qquad 
		0 < \gamma < \ell + \alpha + \frac{1}{p} - \frac{d-1}{d} \, \overline{s},
	\end{align}
	then there exists $\overline{\sigma} > \overline{s}$ such that
	\begin{equation*}
		u \in B^{\sigma}_{\tau}(L_{\tau}(\Omega))
		\qquad \text{for all} \qquad 
		0< \sigma < \overline{\sigma}
		\qquad \text{and} \qquad 
		\frac{1}{\tau} = \frac{\sigma}{d} + \frac{1}{p}.
	\end{equation*}
\end{theorem}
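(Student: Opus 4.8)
The plan is to obtain the statement as a direct consequence of the general embedding \autoref{theorem:embedding}, the only real work being to translate the Sobolev hypothesis into a Besov hypothesis and to unravel the threshold $\sigma^*$ from \eqref{def:max_alpha}. First I would record that, by \autoref{rem:Besov_prop}(iv), the assumption $u\in W^s(L_p(\Omega))$ for all $s<\overline{s}$ yields $u\in B^s_p(L_p(\Omega))$ for all $s<\overline{s}$: indeed $W^{s'}(L_p(\Omega))\hookrightarrow B^s_p(L_p(\Omega))$ whenever $0<s<s'$, and one may insert $s<s'<\overline{s}$. Together with $u\in C^{\ell,\alpha}_{\gamma,\loc}(\Omega)$, this places $u$ in the intersection space $B^s_p(L_p(\Omega))\cap C^{\ell,\alpha}_{\gamma,\loc}(\Omega)$ of \autoref{theorem:embedding}, for every $s<\overline{s}$. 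I would also note that $\overline{s}\geq\ell\geq 1>0$ and that $\gamma<\ell+\alpha+\tfrac1p-\tfrac{d-1}{d}\,\overline{s}<\ell+\alpha+\tfrac1p$, so all structural hypotheses of \autoref{theorem:embedding} (including $d\geq 2$) are in force.

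Next I would fix the value $\sigma^*$ attached by \eqref{def:max_alpha} to the parameters $\ell,\alpha,\gamma,p,d$ and verify $\sigma^*>\overline{s}$ by splitting into the two regimes of $\gamma$. If $0<\gamma<\tfrac{\ell+\alpha}{d}+\tfrac1p$, then $\sigma^*=\ell+\alpha$, and the first inequality in \eqref{ineq:range_alpha_gamma}, namely $\alpha>\overline{s}-\ell$, gives $\sigma^*=\ell+\alpha>\overline{s}$. If instead $\tfrac{\ell+\alpha}{d}+\tfrac1p\leq\gamma<\ell+\alpha+\tfrac1p$, then $\sigma^*=\tfrac{d}{d-1}\bigl(\ell+\alpha+\tfrac1p-\gamma\bigr)$, and the second inequality in \eqref{ineq:range_alpha_gamma} rearranges exactly to $\ell+\alpha+\tfrac1p-\gamma>\tfrac{d-1}{d}\,\overline{s}$, whence $\sigma^*>\overline{s}$ once more. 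In either case put $\overline{\sigma}:=\min\bigl\{\sigma^*,\tfrac{d}{d-1}\,\overline{s}\bigr\}$; since $d\geq 2$ forces $\tfrac{d}{d-1}\,\overline{s}>\overline{s}$ and we have just shown $\sigma^*>\overline{s}$, we obtain $\overline{\sigma}>\overline{s}$.

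Finally I would absorb the $\varepsilon$-loss in the Sobolev-to-Besov passage. Let $0<\sigma<\overline{\sigma}$ and set $1/\tau=\sigma/d+1/p$. From $\sigma<\tfrac{d}{d-1}\,\overline{s}$ we get $\tfrac{d-1}{d}\sigma<\overline{s}$, so we may choose $s$ with $\tfrac{d-1}{d}\sigma<s<\overline{s}$; then $\sigma<\tfrac{d}{d-1}\,s$, and since also $\sigma<\sigma^*$, the pair $(\sigma,\tau)$ lies in the admissible range \eqref{range:thm} of \autoref{theorem:embedding} for this value of $s$. Applying the embedding to $u\in B^s_p(L_p(\Omega))\cap C^{\ell,\alpha}_{\gamma,\loc}(\Omega)$ gives $u\in B^{\sigma}_{\tau}(L_{\tau}(\Omega))$; as $0<\sigma<\overline{\sigma}$ was arbitrary, this is the assertion. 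No genuine obstacle arises here — the only point that needs care is the case distinction for $\sigma^*$ above, together with the observation that $s$ can always be taken close enough to $\overline{s}$ that the constraint $\sigma<\tfrac{d}{d-1}\,s$ is not the binding one.
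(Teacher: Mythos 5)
Your proposal is correct and follows essentially the same route as the paper's own proof: convert the Sobolev hypothesis into $B^s_p(L_p(\Omega))$-membership via \autoref{rem:Besov_prop}(iv), apply \autoref{theorem:embedding}, and check $\sigma^*>\overline{s}$ by the two-case analysis of \eqref{def:max_alpha} under \eqref{ineq:range_alpha_gamma}. Your explicit choice of $s\in\bigl(\tfrac{d-1}{d}\sigma,\overline{s}\bigr)$ merely makes the paper's ``$\varepsilon$ arbitrarily small'' step a bit more transparent, but it is the same argument.
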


Before proving \autoref{thm:generic_besov} we stress that, according to \autoref{Prop:existence}, we know that there indeed exists $\overline{s}\geq 1$ such that all solutions $u$ to the $p$-Poisson equation \link{eq:p-Poisson} are contained in $W^s(L_p(\Omega))$ for all $s<\overline{s}$. 
Moreover, at least when dealing with homogeneous boundary conditions (i.e., solutions of \link{eq:dirichlet_condition}), it is reasonable to assume that $s\in[\ell,\ell+1)$ and that $u\in C^{\ell,\alpha}_{\gamma, \loc}(\Omega)$ with $\ell=1$; see \autoref{rem:sharp_sobolev} and \autoref{rem:sharp_hoelder}, respectively.
Hence, \autoref{thm:generic_besov} particularly describes a wide range of sufficient conditions which ensure that the Besov regularity $\sigma$ (measured in the adaptivity scale w.r.t.\ $L_{p}(\Omega)$) of solutions $u$ to \link{eq:dirichlet_condition} on bounded Lipschitz domains is strictly larger than its maximal Sobolev regularity $\overline{s}$. 
Moreover, we note that the upper bound $\overline{\sigma}$ can be calculated (from $p$, the regularity parameters $\ell$, $\alpha$, and $\gamma$, as well as the dimension~$d$), as the following proof shows.

\begin{proof}[Proof (of \autoref{thm:generic_besov}).]
Since we assume that $u\in W^s(L_p(\Omega))$, $s<\overline{s}$, standard embeddings (cf.~\autoref{rem:Besov_prop}) imply that $u\in B_p^{s}(L_p(\Omega))$ for all $s\in(0,\overline{s})$.
Then, for general $0<\alpha \leq 1$ and $0<\gamma<\ell+\alpha+1/p$,
our embedding result (\autoref{theorem:embedding}) states that the additional assumption $u\in C^{\ell,\alpha}_{\gamma, \loc}(\Omega)$ yields $u \in B^{\sigma}_{\tau}(L_{\tau}(\Omega))$, $1/\tau=\sigma/d+1/p$, for all
\begin{align*}
	0 
	< \sigma
	< \min \left\{ \sigma^*, \frac{d}{d-1}\, (\overline{s}-\varepsilon) \right\}
	=: \overline{\sigma}, 
\end{align*}
where $\varepsilon>0$ can be chosen arbitrarily small and $\sigma^*$ depends on $d$, $p$, $\ell$, $\alpha$, and $\gamma$, as described in \eqref{def:max_alpha}. 
Thus, the maximal Besov regularity (w.r.t.\ the adaptivity scale) $\overline{\sigma}$ of the solution $u$ exceeds its maximal Sobolev regularity $\overline{s}$ provided that $\sigma^*>\overline{s}$.
Due to \eqref{def:max_alpha}, this is the case if $\alpha$ and $\gamma$ satisfy
\begin{align*}
	\ell + \alpha 
	> \overline{s}
	\quad \text{ and } \quad 
	0 
	< \gamma < \frac{\ell + \alpha}{d} + \frac{1}{p},
\end{align*}
or if
\begin{align}
	\frac{d}{d-1}\left( \ell + \alpha + \frac{1}{p} - \gamma \right) 
	> \overline{s} 
	\quad \text{ and } \quad 
	\frac{\ell + \alpha}{d} + \frac{1}{p} \leq \gamma 
	< \ell + \alpha + \frac{1}{p}. \label{eq:condition_to_parameters}
\end{align}
Now the first inequality in \eqref{eq:condition_to_parameters} is equivalent to $\gamma < \ell + \alpha + 1/p - \overline{s} \, (d-1)/d$ such that \eqref{eq:condition_to_parameters} reduces to
\begin{align*}
	\frac{\ell + \alpha}{d} + \frac{1}{p} 
	\leq \gamma < \ell + \alpha + \frac{1}{p} - \frac{d-1}{d} \, \overline{s}.
\end{align*}
This range for $\gamma$ is non-empty if and only if $\ell + \alpha > \overline{s}$. 
In summary, the condition $\ell + \alpha > \overline{s}$ is necessary in both cases and the union of the two ranges for $\gamma$ yields that $\sigma^* > \overline{s}$ for all values of $\alpha$ and $\gamma$ satisfying \link{ineq:range_alpha_gamma}, as claimed.
\end{proof}

\subsection{The $p$-Poisson equation in two dimensions} \label{subsec:p-Poisson_d=2}
As mentioned earlier, in order to derive non-trivial Besov regularity results by means of \autoref{theorem:embedding}, we need to determine (preferably small) spaces $C^{\ell,\alpha}_{\gamma, \loc}(\Omega)$ which contain the solutions $u$ to the $p$-Poisson equation \link{eq:p-Poisson}; see \autoref{subsec:Hoelder} for the definition of these spaces.  
For this purpose we proceed as follows. 
Starting from a known local H\"older regularity result, we estimate the H\"older semi-norms $\abs{u}_{C^{\ell,\alpha}(K)}$ on compact subsets $K \subset \subset \Omega$ in terms of $\delta_K$, in order to conclude estimates on the parameter $\gamma$.
In what follows we restrict ourselves to the situation $d=2$, because in this case explicit bounds on the (local) H\"older regularity are available in the literature.
In particular, quite recently Lindgren and Lindqvist \cite{LinLin2013} have proven a lower bound for the H\"older exponent of solutions to \link{eq:p-Poisson} with right-hand side $f \in L_q(\Omega)$, $q > 2$; see \autoref{Prop:hoelder_reg_plane} below.

\begin{Rem}
We note in passing that in dimension two we have $L_q(\Omega)\hookrightarrow W^{-1}(L_{p'}(\Omega))$, provided that $2/q < 1+ 2/p'$. Hence, \autoref{Prop:existence} guarantees that the problem \link{eq:p-Poisson} is uniquely solvable for all $1<p<\infty$ and $q>2$.
\end{Rem}

The subsequent definition is inspired by \cite{LinLin2013}.
\begin{defi} \label{def:opt_hoelder_reg}
Let us define the local H\"older exponent $\alpha^*_q=\alpha^*_q(p)$ for $2<q\leq \infty$ by
\begin{itemize}
	\item[$\bullet$)] $1<p\leq 2$:\quad\, If $q=\infty$, let $\alpha_q^*$ be any number less than $1$, and if $q<\infty$, let
	\begin{align*}
		\alpha_q^* = 1 - \frac{2}{q}.
	\end{align*}
	\item[$\bullet$)] $2<p<\infty$:\;\: If $q=\infty$, let $\alpha_q^*$ be any number less than $1/(p-1)$, and if $q<\infty$, let
	\begin{align*}
		\alpha_q^* = \frac{1- 2/q}{p-1}.
	\end{align*}
\end{itemize}
\end{defi}

The result of Lindgren and Lindqvist \cite[Theorem~3]{LinLin2013} then reads as follows.
\begin{Prop} \label{Prop:hoelder_reg_plane}
	Let $\Omega \subset \mathbb{R}^2$ be a bounded domain and let $1<p<\infty$. 
	For $2<q\leq \infty$, let $f\in L_q(\Omega)$ and set $\alpha=\alpha_q^*$ as specified in \autoref{def:opt_hoelder_reg}. Moreover, let $u \in W^1(L_p(\Omega))$ be a solution to \link{eq:p-Poisson}.
	Then $u \in C^{1,\alpha}_{\loc}(\Omega)$ and for any compact set $K \subset \Omega$, it holds
	\begin{align}\label{eq:hoelder_estimate_LiLi}
		\lvert u \rvert_{C^{1,\alpha}(K)} 
		\leq C(q,p,\alpha,K) \,\max\!\left\{ \norm{f \sep L_q(\Omega)}^{1/(p-1)}, \norm{u \sep L_{\infty}(\Omega)} \right\}. 
	\end{align}
\end{Prop}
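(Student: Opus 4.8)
The statement is exactly \cite[Theorem~3]{LinLin2013}, so the plan here is to indicate how its two assertions — the membership $u\in C^{1,\alpha}_{\loc}(\Omega)$ with the \emph{sharp} exponent $\alpha=\alpha_q^*$ and the quantitative bound \link{eq:hoelder_estimate_LiLi} — can be organized from a scale-invariant interior estimate. First I would recall that for a right-hand side $f\in L_q(\Omega)$ with $q>2=d$ the classical interior theory for equations with $p$-growth (Uhlenbeck \cite{Uhl1977}, DiBenedetto \cite{DiB1983}, Tolksdorf \cite{Tol1984}) already yields $u\in C^{1,\beta}_{\loc}(\Omega)$ for \emph{some} $\beta\in(0,1)$; the genuinely new point of \cite{LinLin2013} is to pin down the \emph{optimal} value $\beta=\alpha_q^*$ recorded in \autoref{def:opt_hoelder_reg} by a fine, strictly two-dimensional argument. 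I would import this exponent as a black box.

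Next I would localize, exploiting the homogeneity of the operator. Fix $x_0\in\Omega$ and $\rho>0$ with $\overline{B_{2\rho}(x_0)}\subset\Omega$, and rescale via $v(y)=\lambda^{-1}u(x_0+\rho y)$ on $B_2(0)$, which solves $-\div{\abs{\nabla v}^{p-2}\nabla v}=\tilde f$ with $\tilde f(y)=\lambda^{-(p-1)}\rho^p f(x_0+\rho y)$. Choosing
\begin{equation*}
	\lambda=\max\!\left\{\norm{u\sep L_\infty(B_{2\rho}(x_0))},\;\bigl(\rho^{\,p-2/q}\,\norm{f\sep L_q(B_{2\rho}(x_0))}\bigr)^{1/(p-1)}\right\}
\end{equation*}
normalizes $\max\{\norm{v\sep L_\infty(B_2)},\norm{\tilde f\sep L_q(B_2)}\}\lesssim 1$; applying the unit-ball interior estimate to $v$ gives $\abs{v}_{C^{1,\alpha}(B_1)}\lesssim 1$, and undoing the scaling yields
\begin{equation*}
	\abs{u}_{C^{1,\alpha}(B_\rho(x_0))}\lesssim C(\rho,p,q,\alpha)\,\max\!\left\{\norm{f\sep L_q(B_{2\rho}(x_0))}^{1/(p-1)},\norm{u\sep L_\infty(B_{2\rho}(x_0))}\right\},
\end{equation*}
the exponent $1/(p-1)$ being forced precisely by the $(p-1)$-homogeneity of the operator and the $\rho$-dependence coming from the rescaling of $f$ and of the Hölder quotient.

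To pass to an arbitrary compact $K\subset\subset\Omega$ I would use a covering argument: since $\delta_K=\dist(K,\partial\Omega)>0$, finitely many balls $B_\rho(x_i)$, $i=1,\dots,N$, with $x_i\in K$ and $\rho=\delta_K/4$ cover $K$ and satisfy $\overline{B_{2\rho}(x_i)}\subset\Omega$. Applying the ball estimate on each $B_\rho(x_i)$, bounding the right-hand sides by the global quantities $\norm{f\sep L_q(\Omega)}$ and $\norm{u\sep L_\infty(\Omega)}$, and then reconstructing the full Hölder seminorm on $K$ from the local ones — splitting pairs $x,y\in K$ into those contained in a common ball $B_\rho(x_i)$ and those with $\abs{x-y}\gtrsim\delta_K$, the latter estimated by $\diam(K)^{1-\alpha}\norm{\nabla u\sep L_\infty(K)}$ — produces \link{eq:hoelder_estimate_LiLi} with a constant $C(q,p,\alpha,K)$ absorbing $N$, $\rho$ and $\diam(K)$, i.e., depending only on the listed data.

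The main obstacle is \emph{not} the localization, which is routine bookkeeping, but the identification of the sharp Hölder exponent $\alpha_q^*$: this is where the two-dimensional structure is essential (the value is strictly better than what a naive Sobolev-embedding heuristic suggests once $p$ is large), and we do not reprove it here but take it from \cite{LinLin2013}.
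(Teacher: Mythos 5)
Your approach matches the paper's: the paper does not prove this proposition at all, but quotes it verbatim from \cite[Theorem~3]{LinLin2013}, exactly as you do when you import the sharp exponent $\alpha_q^*$ and the unit-ball estimate as a black box. The additional rescaling-and-covering reduction you sketch is not needed for the statement itself (the cited theorem already gives the bound on arbitrary compact $K\subset\subset\Omega$), and it essentially reproduces the dilation argument the paper carries out afterwards in \autoref{lem:weighted_hoelder_est}.
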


\begin{Rem}\label{rem:opt_hoelder}
It is known that the H\"older exponent $\alpha_q^*$ defined above is sharp, at least for $p>2$ and $2<q \leq \infty$. If $q=\infty$, then this follows from the example given in \autoref{rem:sharp_hoelder}. Corresponding examples for finite $q$ can be found in \cite{LinLin2013}.
\end{Rem}

Based on the local H\"older regularity result given in \autoref{Prop:hoelder_reg_plane}, we are able to show that, for $\alpha=\alpha_q^*$ and certain values of $\gamma$, solutions to the $p$-Poisson equation \link{eq:p-Poisson} are contained in locally \emph{weighted} H\"older spaces $C^{1,\alpha}_{\gamma, \loc}(\Omega)$, too; see \autoref{Prop:hoelder_reg_plane2} below.
To do so, we have to examine the dependence of the constant $C(q,p,\alpha,K)$ in \eqref{eq:hoelder_estimate_LiLi} on $K\subset\subset \Omega$. This is performed in the subsequent lemma.

\begin{lemma}\label{lem:weighted_hoelder_est}
	Let the assumptions of \autoref{Prop:hoelder_reg_plane} be satisfied.
	Then, for every disc $B_{r/4} \subset \Omega$ of radius $r/4>0$ such that $\mathring{B}_{2r}$ is contained in $\Omega$ as well, we have
			\begin{align}	\label{eq:d2hoelder_first}
				\lvert u \rvert_{C^{1,\alpha}(B_{r/4})} 
				\leq C(q,p,\alpha, \Omega) \, r^{- \alpha -1} \, \max \!\left\{ \norm{f \sep L_q(B_{r})}^{1/(p-1)}, \norm{u \sep L_{\infty}(B_{r})} \right\}
			\end{align}
			and, for $t>2$,
			\begin{align}   \label{eq:d2hoelder_second}
				\lvert u \rvert_{C^{1,\alpha}(B_{r/4})} 
				\leq \hat{C}(q,p,\alpha, \Omega, t) \, r^{- \alpha -2/t} \, \max \!\left\{ \norm{f \sep L_q(B_{2r})}^{1/(p-1)}, \norm{ \nabla u  \sep L_t(B_{2r})} \right\}.
			\end{align}
\end{lemma}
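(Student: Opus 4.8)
The plan is to exploit the scale invariance of the $p$-Poisson equation together with the a priori estimate \eqref{eq:hoelder_estimate_LiLi} applied on a fixed reference ball. First I would set up the rescaling: given a disc $B_{r/4}=B_{r/4}(x_0)\subset\Omega$ with $\mathring B_{2r}(x_0)\subset\Omega$, introduce the change of variables $y\mapsto x_0+r\,y$ and define $v(y)=r^{-1}u(x_0+r\,y)$ on the unit-scale domain. A direct computation shows that $v$ solves the $p$-Poisson equation $-\div{\abs{\nabla v}^{p-2}\nabla v}=\hat f$ on $\mathring B_2(0)$ with right-hand side $\hat f(y)=r\,f(x_0+r\,y)$, since $\nabla v(y)=(\nabla u)(x_0+r\,y)$ and one power of $r$ is produced by the divergence. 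Then I would apply \autoref{Prop:hoelder_reg_plane} to $v$ on the reference configuration $K=\overline{B_{1/4}(0)}\subset\subset \mathring B_2(0)$, so that the constant $C(q,p,\alpha,K)$ becomes an absolute constant depending only on $q,p,\alpha$ (and, harmlessly, on $\Omega$ through its diameter, to cover the case $r\gtrsim\diam\Omega$), yielding
\begin{equation*}
	\abs{v}_{C^{1,\alpha}(B_{1/4}(0))}\lesssim \max\!\left\{\norm{\hat f\sep L_q(B_1(0))}^{1/(p-1)},\ \norm{v\sep L_\infty(B_1(0))}\right\}.
\end{equation*}

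Next I would translate each term back to the original variables by tracking the powers of $r$. For the semi-norm on the left: from $\partial^\nu v(y)=r\,(\partial^\nu u)(x_0+ry)$ for $\abs\nu=1$ (no extra $r$ because $v$ already carries a $1/r$ and each derivative in $y$ supplies an $r$), the first-order H\"older semi-norm picks up an additional factor $r^{\alpha}$ from the difference quotient rescaling, so $\abs{v}_{C^{1,\alpha}(B_{1/4}(0))}=r^{\,\alpha+1}\,\abs{u}_{C^{1,\alpha}(B_{r/4}(x_0))}$ — wait, I must be careful: $\abs{u}_{C^{1,\alpha}(B_{r/4})}$ involves $\abs{x-y}^{-\alpha}$, which under $x=x_0+r\tilde x$ becomes $r^{-\alpha}$, while $\partial^\nu u = r^{-1}\partial^\nu v(\cdot)$ composed appropriately; collecting, one gets $\abs{v}_{C^{1,\alpha}}=r^{1+\alpha}\abs{u}_{C^{1,\alpha}}$, hence $\abs{u}_{C^{1,\alpha}(B_{r/4})}=r^{-\alpha-1}\abs{v}_{C^{1,\alpha}(B_{1/4}(0))}$, which already produces the prefactor $r^{-\alpha-1}$ appearing in \eqref{eq:d2hoelder_first}. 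For the right-hand side of the rescaled estimate: $\norm{\hat f\sep L_q(B_1(0))}=r\,(r^{-2/q})\,\norm{f\sep L_q(B_r(x_0))}=r^{1-2/q}\norm{f\sep L_q(B_r)}$, and $\norm{v\sep L_\infty(B_1(0))}=r^{-1}\norm{u\sep L_\infty(B_r)}$. Feeding these in and using $\alpha=\alpha_q^*$ so that $(1-2/q)/(p-1)=\alpha$ when $q<\infty$ (and the corresponding bookkeeping when $q=\infty$), the powers of $r$ inside the maximum combine to the common factor $r^{-\alpha-1}$ after multiplying by the $r^{-\alpha-1}$ already outside; a short check confirms consistency in both terms of the maximum, giving \eqref{eq:d2hoelder_first}.

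For the second estimate \eqref{eq:d2hoelder_second} the only change is to replace the $L_\infty$-control of $u$ by an $L_t$-control of $\nabla u$ with $t>2$. Here I would, before rescaling, use an interior estimate bounding $\norm{v\sep L_\infty(B_1(0))}$ (equivalently $\norm{\nabla v\sep L_\infty(B_1(0))}$ up to the mean, which drops out of the $C^{1,\alpha}$ semi-norm anyway) by $\norm{\nabla v\sep L_t(B_2(0))}$ plus the $L_q$-norm of $\hat f$ — this is precisely the content of the local $C^{1,\alpha}$ bound combined with Sobolev/Morrey embedding $W^{1,t}\hookrightarrow L^\infty$ in dimension two for $t>2$, or more cleanly by applying \autoref{Prop:hoelder_reg_plane} on the slightly larger ball and absorbing. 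Rescaling back, $\norm{\nabla v\sep L_t(B_2(0))}=r^{-2/t}\norm{\nabla u\sep L_t(B_{2r})}$ and $\norm{\hat f\sep L_q(B_2(0))}=r^{1-2/q}\norm{f\sep L_q(B_{2r})}$, which after multiplying by $r^{-\alpha}$ (the remaining part of the prefactor once $r^{-2/t}$ is isolated) yields the stated $r^{-\alpha-2/t}$ with the max over $\norm{f}^{1/(p-1)}$ and $\norm{\nabla u}$; the constant now additionally depends on $t$, matching $\hat C(q,p,\alpha,\Omega,t)$.

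\textbf{Main obstacle.} The delicate point is the bookkeeping of the exponents of $r$ — making sure that, after substituting $\alpha=\alpha_q^*$, the power of $r$ attached to $\norm{f\sep L_q}^{1/(p-1)}$ genuinely equals the power attached to the $u$- or $\nabla u$-term so that a single prefactor $r^{-\alpha-1}$ (resp.\ $r^{-\alpha-2/t}$) can be factored out of the maximum. This requires separately handling $1<p\le 2$ and $p>2$, and the $q=\infty$ endpoint (where $\alpha_q^*$ is an arbitrary number below the critical value and $2/q=0$), and checking in each case that the identity $(1-2/q)/(p-1)=\alpha_q^*$ (or its $p\le2$ analogue $1-2/q=\alpha_q^*$) is exactly what forces the two powers to coincide. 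A secondary technical nuisance is that \autoref{Prop:hoelder_reg_plane} as stated produces a constant depending on $K$; confining $K$ to the fixed reference disc $\overline{B_{1/4}(0)}$ removes this dependence, but one must verify the hypothesis $\mathring B_2(0)\subset\Omega_{\mathrm{scaled}}$ holds precisely because of the assumption $\mathring B_{2r}(x_0)\subset\Omega$, which is why the lemma is phrased with the factor-$2$ (resp.\ factor-$8$) enlargement of the disc rather than a minimal one.
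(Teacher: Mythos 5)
Your strategy is essentially the one used in the paper: translate and dilate so that \autoref{Prop:hoelder_reg_plane} can be applied on a fixed reference configuration (a unit-scale disc with $K=\overline{B_{1/4}(0)}$), which makes its constant independent of the particular ball, then scale back; and for \eqref{eq:d2hoelder_second}, use that $u-c$ is again a solution and control $\norm{u-c \sep L_\infty}$ by $\norm{\nabla u \sep L_t}$, $t>2$, via a Whitney/Morrey-type estimate (the paper uses \autoref{Prop:Whitney} with $k=1$, $p=\infty$, $q=t$). However, the exponent bookkeeping --- which you single out as the main obstacle --- does not work the way you describe, in two respects.

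First, with your normalization $v(y)=r^{-1}u(x_0+ry)$ one has $\nabla v(y)=(\nabla u)(x_0+ry)$, hence $\abs{v}_{C^{1,\alpha}(B_{1/4}(0))}=r^{\alpha}\,\abs{u}_{C^{1,\alpha}(B_{r/4}(x_0))}$, not $r^{1+\alpha}$; the factor $r^{1+\alpha}$ belongs to the unnormalized dilation $U=u(x_0+r\,\cdot)$, for which in turn $\norm{U \sep L_\infty(B_1(0))}=\norm{u \sep L_\infty(B_r(x_0))}$ carries no factor $r^{-1}$. Second, and more substantially, the powers of $r$ attached to the two terms of the maximum do \emph{not} coincide, and the identity $\alpha=\alpha_q^*$ is irrelevant for this step. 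Carrying out the rescaling correctly (in either normalization) yields
\begin{equation*}
	\abs{u}_{C^{1,\alpha}(B_{r/4})}
	\lesssim r^{-\alpha-1}\,\max\!\left\{ r^{(p-2/q)/(p-1)}\,\norm{f \sep L_q(B_r)}^{1/(p-1)},\; \norm{u \sep L_\infty(B_r)} \right\},
\end{equation*}
so the $f$-term carries the surplus power $r^{(p-2/q)/(p-1)}$, and in the gradient version the surplus $r^{2/t+(1-2/q)/(p-1)}$ relative to the prefactor $r^{-\alpha-2/t}$. These surplus exponents are strictly positive for all $1<p<\infty$, $q>2$, $t>2$, and since $B_{2r}\subset\Omega$ forces $r\leq \diam(\Omega)$, they are simply absorbed into the constant; this absorption is precisely the origin of the dependence of $C$ and $\hat{C}$ on $\Omega$ (in the paper, $\tilde{C}=C\cdot\max\{1,\diam(\Omega)^{(p-2/q)/(p-1)}\}$), rather than a need to cover large $r$. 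In particular, no case distinction between $1<p\leq 2$, $p>2$, and $q=\infty$ is required at this point, and the lemma would hold for any H\"older exponent furnished by \autoref{Prop:hoelder_reg_plane}. If you replace the claimed cancellation by this absorption argument and correct the semi-norm scaling, your proof closes and coincides with the paper's.
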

\begin{proof}
To show the claim, assume that $u$ solves \link{eq:p-Poisson} on the whole domain $\Omega$ and let $B_r(x_0)\subset\Omega$ denote a disc of radius $r>0$ around an arbitrary point $x_0$. 
Then, certainly, $u$ is a solution of the restricted problem $\div{\lvert \nabla u \rvert^{p-2} \nabla u} = f$ in $B_{r}(x_0)$, as well. 
Moreover, from \autoref{Prop:hoelder_reg_plane} we infer that $u$ belongs to $C^{1,\alpha}_{\loc}(\Omega)$ with $\alpha=\alpha^*_q$ given in \autoref{def:opt_hoelder_reg}.
Hence, in particular $u \in L_{\infty}(B_{r}(x_0))$. 

Now let us perform a translation to the origin.
One checks easily that then $\tilde{u} = u(\cdot + x_0)$ solves 
\begin{equation*}
\div{\lvert \nabla \tilde{u} \rvert^{p-2} \nabla \tilde{u}} = \tilde{f} \quad \text{in} \quad B_r(0),
\end{equation*}
where $\tilde{f} = f(\cdot + x_0)$. 
Thus, it suffices to prove \eqref{eq:d2hoelder_first} and \eqref{eq:d2hoelder_second} only for solutions to the $p$-Poisson equation \link{eq:p-Poisson} in $B_r(0),\,r > 0$. 

To do so, we use a result for the unit disc $B_1(0)$. By \autoref{Prop:hoelder_reg_plane}, with $\overline{\Omega} = B_1(0)$ and $K = B_{1/4}(0)$, we know that if $u$ solves $\div{\lvert \nabla u \rvert^{p-2} \nabla u} = f$ in $B_1(0)$ with $u \in L_{\infty}(B_1(0))$ and $f \in L_q(B_1(0))$, then there exists a constant $C = C(q,p,\alpha)>0$, such that for all $x,y \in B_{1/4}(0)$ it holds
\begin{align}
	\lvert \nabla u(x) - \nabla u(y) \rvert 
	\leq C \lvert x - y \rvert^{\alpha} \,\max \!\left\{ \norm{f \sep L_q(B_1(0))}^{1/(p-1)}, \norm{u \sep L_{\infty}(B_1(0))} \right\}. \label{eq:hoelder_unit_disc}
\end{align}
Now suppose that $u$ solves $\div{\lvert \nabla u \rvert^{p-2} \nabla u} = f$ in some dilated disc $B_r(0)$ and 
let $F = r^p\,f(r\cdot)$. 
Then it is easy to see that $U = u(r\cdot)$ solves 
\begin{equation*}
\div{\lvert \nabla U \rvert^{p-2} \nabla U} = F \quad \text{in} \quad B_1(0).
\end{equation*}  
Clearly, $\norm{F \sep L_q(B_1(0))} = r^{p - 2/q} \norm{f \sep L_q(B_r(0))}$ and $\norm{U \sep L_{\infty}(B_1(0))} = \norm{u \sep L_{\infty}(B_r(0))}$. Next, we apply the estimate \eqref{eq:hoelder_unit_disc} to $U$ which yields that for all $x,y \in B_{1/4}(0)$
\begin{align*}
	&\abs{\nabla u(rx) - \nabla u(ry)} \\
	&\quad\qquad = r^{-1} \abs{\nabla U(x) - \nabla U(y)}  \\ 
	&\quad\qquad \leq C \, r^{-1} \abs{x - y}^{\alpha} \, \max\!\left\{ \norm{F \sep L_q(B_1(0))}^{1/(p-1)}, \norm{U \sep L_{\infty}(B_1(0))} \right\} \\
	&\quad\qquad \leq C \, r^{-1-\alpha} \abs{rx - ry}^{\alpha} \, \max\!\left\{ r^{(p-2/q)/(p-1)} \norm{f \sep L_q(B_r(0))}^{1/(p-1)}, \norm{u \sep L_{\infty}(B_r(0))} \right\}.
\end{align*}
Hence, for all $x \neq y$ in $B_{r/4}(0)$ it holds
\begin{align}
	&\frac{\abs{\nabla u(x) - \nabla u(y)}}{\abs{x - y}^{\alpha}} \nonumber\\
	&\qquad \qquad \leq C \, r^{-1-\alpha} \, \max\!\left\{ r^{(p-2/q)/(p-1)} \norm{f \sep L_q(B_r(0))}^{1/(p-1)}, \norm{u \sep L_{\infty}(B_r(0))} \right\} \label{eq:d2hoelder_first_intermediate} \\
 	&\qquad\qquad \leq \tilde{C} \, r^{-1-\alpha} \, \max\!\left\{ \norm{f \sep L_q(B_r(0))}^{1/(p-1)}, \norm{u \sep L_{\infty}(B_r(0))} \right\}, \notag
\end{align}
where $\tilde{C} = C \cdot \max\!\left\{ 1, \text{diam}(\Omega)^{(p-2/q)/(p-1)} \right\}$ and $(p-2/q)/(p-1)>0$, since $2/q<1<p$. This shows \eqref{eq:d2hoelder_first} for all discs $B_{r/4}(0)$ under consideration.

We are left with the proof of \eqref{eq:d2hoelder_second} for these discs.
Note that if $u$ solves \eqref{eq:p-Poisson}, so does $u-c$ for every constant $c$. 
Hence, from \eqref{eq:d2hoelder_first_intermediate} we infer
\begin{align}
	\frac{\abs{\nabla u(x) - \nabla u(y)}}{\abs{x - y}^{\alpha}} 
	&\leq C \, r^{-1-\alpha} \, \max\!\left\{ r^{(p-2/q)/(p-1)} \norm{f \sep L_q(B_r(0))}^{1/(p-1)}, \norm{u - c \sep L_{\infty}(B_r(0))} \right\}, \label{eq:d2hoelder_second_intermediate}
\end{align}
whenever $x \neq y$ belong to $B_{r/4}(0)$.
Next we apply Whitney's estimate (see \autoref{Prop:Whitney}) with $k=1$, $d=2$, $p=\infty$, and $q=t$. 
Thus, for every $t > d=2$ and every square $Q \subset \Omega$, there exist constants $c$ and $C'$, such that 
\begin{align}\label{eq:d2hoelder_second_intermediate2}
	\norm{u-c \sep L_{\infty}(Q)} 
	\leq C' \, \abs{ Q }^{1/2 - 1/t} \, \abs{u}_{W^1(L_t(Q))}. 
\end{align}
Let $Q_r$ denote the square in $\R^2$ with sides parallel to the coordinate axes and side length $2r$ that contains $B_r(0)$. 
Using the fact that $\lvert Q_r \rvert^{1/2 - 1/t} = (2r)^{1-2/t}$, from \eqref{eq:d2hoelder_second_intermediate2} we conclude 
\begin{align}
	\norm{u-c \sep L_{\infty}(B_r(0))} 
	&\leq C' \, \abs{Q_r}^{1/2 - 1/t} \, \abs{u}_{W^1(L_t(Q_r))} 
	\leq C'' \, r^{1 - 2/t} \, \norm{\nabla u \sep L_t(B_{2r}(0))} \label{eq:d2hoelder_second_intermediate3}
\end{align}
Now, \eqref{eq:d2hoelder_second_intermediate} and \eqref{eq:d2hoelder_second_intermediate3} together yield the upper bound
\begin{align*}
	&\frac{\abs{\nabla u(x) - \nabla u(y)}}{\abs{x - y}^{\alpha}}\\
	&\qquad \quad \leq C \, C''\, r^{-2/t-\alpha} \, \max\!\left\{ r^{-1 + 2/t + (p-2/q)/(p-1)} \norm{f \sep L_q(B_r(0))}^{1/(p-1)}, \norm{\nabla u \sep L_t(B_{2r}(0))} \right\}.
\end{align*}
Since, clearly,
\begin{align*}
	-1 + \frac{2}{t} + \frac{p-2/q}{p-1}=\frac{2}{t}+\frac{1-2/q}{p-1} > 0,
\end{align*}
by setting $\hat{C} = C \cdot C''\cdot \max\!\left\{ 1, \diam(\Omega)^{2/t + (1-2/q)/(p-1)} \right\}$ we finally arrive at
\begin{align*}
	\frac{\abs{\nabla u(x) - \nabla u(y)}}{\abs{x - y}^{\alpha}}
	\leq \hat{C} \, r^{-2/t -\alpha} \, \max\!\left\{ \norm{f \sep L_q(B_{2r}(0))}^{1/(p-1)}, \norm{ \nabla u \sep L_t(B_{2r}(0))} \right\}
\end{align*}
for all $x \neq y$ in $B_{r/4}(0)$. 
This shows \eqref{eq:d2hoelder_second} for all discs of interest.
\end{proof}

The locally weighted H\"older regularity result which forms the basis for our further analysis now can be derived easily from \eqref{eq:d2hoelder_second}:
\begin{Prop}[$C^{1,\alpha}_{\gamma,\loc}(\Omega)$ regularity] \label{Prop:hoelder_reg_plane2}
Let $\Omega \subset \mathbb{R}^2$ be a bounded Lipschitz domain and assume $1<p<\infty$. 
	Furthermore, for $2<q\leq \infty$ and $f\in L_q(\Omega)$, let $u \in W^1(L_p(\Omega))$ be some solution to the $p$-Poisson equation \eqref{eq:p-Poisson} and set $\alpha=\alpha^*_q$ as in \autoref{def:opt_hoelder_reg}.
	\begin{itemize}
		\item[(i)] If $\abs{\nabla u} \in L_t(\Omega)$ for some $t>2$, then we have
			\begin{align} \label{eq:d2hoelder_third}
				u \in C^{1,\alpha}_{\gamma, \loc}(\Omega)
				\qquad \text{for} \qquad 
				\alpha=\alpha^*_q,
			\end{align}
			as well as every weight parameter $\gamma \geq \alpha + 2/t$.
		\item[(ii)] If $u\in W^s(L_p(\Omega))$ for all $s<\overline{s}$ with some $\overline{s}>\max\{2/p,1\}$, then \link{eq:d2hoelder_third} holds true for all
		\begin{equation*}
			\gamma >\alpha + \max\!\left\{0,1-\overline{s}+\frac{2}{p}\right\}.
		\end{equation*}
	\end{itemize} 
\end{Prop}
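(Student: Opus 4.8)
The plan is to obtain both statements directly from estimate~\eqref{eq:d2hoelder_second} of \autoref{lem:weighted_hoelder_est}. For part~(i), \autoref{Prop:hoelder_reg_plane} already yields $u\in C^{1,\alpha}_{\loc}(\Omega)$ with $\alpha=\alpha^*_q$, so $u\in C^{1,\alpha}(K)$ for every $K\subset\subset\Omega$ and only the finiteness of $\sup_{K\in\K}\delta_K^{\gamma}\abs{u}_{C^{1,\alpha}(K)}$ remains to be checked. Since $C^{1,\alpha}_{\gamma,\loc}(\Omega)=C^{1,\alpha}_{\gamma,\loc}(\Omega;\K(c))$ is independent of $c>1$ (cf.~\autoref{subsec:Hoelder}), it suffices to treat $\K=\K(8)$, the value $8$ being dictated by the geometry of \autoref{lem:weighted_hoelder_est}, which controls $\abs{u}_{C^{1,\alpha}(B_{r/4})}$ in terms of data on $B_{2r}$. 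For $K=\overline{B_\rho(x_0)}\in\K(8)$ one has $\dist(x_0,\partial\Omega)\geq 8\rho$, hence $\delta_K=\dist(x_0,\partial\Omega)-\rho\geq 7\rho$; choosing the \emph{largest} admissible radius $r:=\dist(x_0,\partial\Omega)/2$ makes $r/4\geq\rho$ and $\mathring{B}_{2r}(x_0)\subset\Omega$, so that $K\subset B_{r/4}(x_0)$ and \eqref{eq:d2hoelder_second} applies on $B_{r/4}(x_0)$.

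Combining monotonicity of the H\"older semi-norm in the domain with the trivial bounds $\norm{f\sep L_q(B_{2r}(x_0))}\leq\norm{f\sep L_q(\Omega)}$ and $\norm{\nabla u\sep L_t(B_{2r}(x_0))}\leq\norm{\nabla u\sep L_t(\Omega)}$, estimate~\eqref{eq:d2hoelder_second} gives $\abs{u}_{C^{1,\alpha}(K)}\leq\abs{u}_{C^{1,\alpha}(B_{r/4}(x_0))}\lesssim r^{-\alpha-2/t}M$, where $M:=\max\{\norm{f\sep L_q(\Omega)}^{1/(p-1)},\norm{\nabla u\sep L_t(\Omega)}\}<\infty$ and the implied constant depends only on $q,p,\alpha,\Omega,t$ (in particular, not on $K$). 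Since $r=\dist(x_0,\partial\Omega)/2\geq\delta_K/2$, since $\delta_K\leq\diam(\Omega)$, and since $\gamma\geq\alpha+2/t$ renders the exponent $\gamma-\alpha-2/t$ non-negative, one concludes $\delta_K^{\gamma}\abs{u}_{C^{1,\alpha}(K)}\lesssim M\,\delta_K^{\gamma-\alpha-2/t}\lesssim M\,\max\{1,\diam(\Omega)\}^{\gamma-\alpha-2/t}$ uniformly in $K\in\K(8)$; passing to the supremum proves $\abs{u}_{C^{1,\alpha}_{\gamma,\loc}}<\infty$, hence~(i).

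For part~(ii), I would reduce to~(i) by producing some $t>2$ with $\abs{\nabla u}\in L_t(\Omega)$ and $\gamma\geq\alpha+2/t$, where $\alpha=\alpha^*_q$. Fix $\gamma>\alpha+\max\{0,1-\overline{s}+2/p\}$. As $\overline{s}>\max\{1,2/p\}$ we may pick $s\in(\max\{1,2/p\},\overline{s})$; then $u\in W^s(L_p(\Omega))$ forces $\nabla u\in W^{s-1}(L_p(\Omega))$ with $0<s-1<\overline{s}-1$, and the Sobolev embedding in dimension $d=2$ gives $\nabla u\in L_t(\Omega)$ with $2/t=2/p-(s-1)$ if $s-1<2/p$ (where $t>2$ precisely because $s>2/p$), and with $t$ arbitrarily large if $s-1\geq 2/p$. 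In the first regime $\alpha+2/t=\alpha+1-s+2/p$ tends to $\alpha+1-\overline{s}+2/p$ as $s$ increases to $\overline{s}$; if moreover $\overline{s}\geq 1+2/p$ — in which case $\max\{0,1-\overline{s}+2/p\}=0$ — one may instead arrange $s-1\geq 2/p$ and take $t$ as large as one pleases, so that $\alpha+2/t$ can be pushed arbitrarily close to $\alpha$. In either case, for the fixed $\gamma$ one can choose $s$ (and, where applicable, $t$) with $t>2$ and $\gamma\geq\alpha+2/t$, whence part~(i) yields $u\in C^{1,\alpha}_{\gamma,\loc}(\Omega)$.

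I expect the crux to be the choice of the radius in part~(i): taking $r$ as large as the inclusion $\mathring{B}_{2r}(x_0)\subset\Omega$ allows — namely $r=\dist(x_0,\partial\Omega)/2$, so that $K$ lies well inside $B_{r/4}(x_0)$ — is exactly what converts the scale-$r$ blow-up $r^{-\alpha-2/t}$ of \eqref{eq:d2hoelder_second} into the distance-to-the-boundary blow-up $\delta_K^{-\alpha-2/t}$ that the weight $\delta_K^{\gamma}$ in the definition of $C^{1,\alpha}_{\gamma,\loc}(\Omega)$ is designed to absorb. Everything else — uniformity of the constants in \autoref{lem:weighted_hoelder_est}, the Sobolev-embedding bookkeeping, and the behaviour around the critical smoothness $s-1=2/p$ — is then routine.
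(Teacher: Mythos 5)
Your proposal is correct and follows essentially the same route as the paper: part (i) is obtained by applying estimate \eqref{eq:d2hoelder_second} of \autoref{lem:weighted_hoelder_est} on a disc whose radius is comparable to $\dist(x_0,\partial\Omega)$, so that the factor $r^{-\alpha-2/t}$ becomes $\delta_K^{-\alpha-2/t}$ and is absorbed by the weight $\delta_K^\gamma$ (the paper fixes $c>8$ and an intermediate radius $R\in(\dist/16,\dist/8)$ and reduces to the limiting $\gamma=\alpha+2/t$ by monotonicity, while you work with $\K(8)$ and handle general $\gamma\geq\alpha+2/t$ directly — a cosmetic difference), and part (ii) reduces to (i) via $\nabla u\in W^{s-1}(L_p(\Omega))\hookrightarrow L_t(\Omega)$ with $s$ close to $\overline{s}$, exactly as in the paper. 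The only slight imprecision is in (ii) for the boundary case $\overline{s}=1+2/p$, where one cannot "arrange $s-1\geq 2/p$" with $s<\overline{s}$; however, your first-regime limit $\alpha+2/t\to\alpha<\gamma$ already covers this case, so the argument stands.
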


\begin{proof}
Let us prove (i). Since the locally weighted H\"older spaces $C^{1,\alpha}_{\gamma, \loc}(\Omega)=C^{1,\alpha}_{\gamma, \loc}(\Omega; \K(c))$ are monotone in $\gamma$ (see \autoref{Rem:locally_weighted_hoelder_spaces}), we may restrict ourselves to the limiting case $\gamma=\alpha + 2/t$.
Moreover, without loss of generality, we can assume $c > 8$; cf.\ \autoref{sec:spaces}. 
Then let us consider a compact disc $B_{r}\in \K(c)$, i.e., $B_{r}=B_{r}(x_0)$ with $x_0\in\Omega$ and $r>0$ such that the (open) disc $\mathring{B}_{c\, r}(x_0)$ still is contained in $\Omega$.
Clearly, $r < \dist(x_0,\partial\Omega)/8$, so that we can choose $R\geq r$ with
\begin{equation*}
	\frac{\dist(x_0,\partial\Omega)}{16} < R < \frac{\dist(x_0,\partial\Omega)}{8}.
\end{equation*}
Consequently, $B_R=B_R(x_0)$ is a compact disc with $B_r\subseteq B_R\subset \mathring{B}_{8R} \subset\Omega$.
Therefore, \eqref{eq:d2hoelder_second} applied for $B_R$ yields
\begin{align*}
	\lvert u \rvert_{C^{1,\alpha}(B_{r})} 
	\leq \lvert u \rvert_{C^{1,\alpha}(B_{R})}
	\leq C \, R^{- \alpha -2/t} \, \max \left\{ \norm{f \sep L_q(B_{8R})}^{1/(p-1)}, \norm{ \nabla u \sep L_{t}(B_{8R})} \right\},
\end{align*}
where $C = C(q,p,\alpha,\Omega)$ does not depend on $r$. 
Since $\delta_{B_r} < \dist(x_0,\partial\Omega)<16\,R$ and $\gamma = \alpha + 2/t$, setting $C'=C\cdot 16^\gamma$ we may estimate further
\begin{align*}
	\lvert u \rvert_{C^{1,\alpha}(B_{r})} 
	&\leq C' \, \delta_{B_r}^{- \gamma} \max \left\{ \norm{ f \sep L_q(\Omega)}^{1/(p-1)}, \norm{ \nabla u \sep L_t(\Omega)} \right\}.
\end{align*}
Observe that the latter maximum is finite due to the additional assumption that $\abs{\nabla u}$ belongs to $L_t(\Omega)$.
Multiplying by $\delta_{B_r}^{\gamma}$ and taking the supremum over all $B_{r}\in \K(c)$ thus proves the claim stated in (i).

The proof of (ii) follows from Sobolev's embedding: At first, note that $\overline{s}>2/p$ yields that $1>\max\{0,1-\overline{s}+2/p\}$. 
Therefore, we can choose $s<\overline{s}$ and $t>2$ such that $2/t>\max\{0,1-s+2/p\}$ is arbitrary close to $\max\{0,1-\overline{s}+2/p\}$. 
Thus, in view of \link{eq:d2hoelder_third}, it remains to show that $\abs{\nabla u}\in L_t(\Omega)$ for this choice of $s$ and $t$.
To do so, observe that $s-1>2/p-2/t$.
Since we imposed the additional condition that $\overline{s}>1$, we may assume that $s-1>0$. 
Hence, it follows
\begin{equation*}
	s-1 > 2 \cdot \max\!\left\{0, \frac{1}{p} - \frac{1}{t} \right\}
\end{equation*}
which particularly implies the embedding $W^{s-1}(L_p(\Omega)) \hookrightarrow L_t(\Omega)$.
Finally, the fact that $u\in W^s(L_p(\Omega))$ yields $\abs{\nabla u} \in W^{s-1}(L_p(\Omega))$ completes the proof.
\end{proof}

Next let us combine the locally weighted H\"older regularity result obtained in \autoref{Prop:hoelder_reg_plane2} above with the generic Besov regularity result stated in \autoref{thm:generic_besov}.
This leads to conditions on the Sobolev smoothness of solutions $u$ to the $p$-Poisson equation \link{eq:p-Poisson} which imply (non-trivial) Besov regularity assertions for these $u$.

\begin{theorem}\label{thm:besov_reg_2d_lip}
	Let $\Omega \subset \mathbb{R}^2$ be a bounded Lipschitz domain and assume $1<p<\infty$. Moreover, for $2<q\leq \infty$, as well as $f\in L_q(\Omega)$, let $u$ be some solution to the $p$-Poisson equation \link{eq:p-Poisson} which satisfies $u\in W^s(L_p(\Omega))$ for all $s<\overline{s}$.
	Then the conditions
	\begin{itemize}
		\item[$\bullet$)] $1<p \leq 2$\quad\, and \quad $\frac{2}{p} < \overline{s} < 2 - \frac{2}{q}$,
		\item[$\bullet$)] $2<p<\infty$\;\; and \quad $1 < \overline{s} < 1 + \frac{1-2/q}{p-1}$
	\end{itemize}
	imply that there exists $\overline{\sigma} > \overline{s}$ such that
	\begin{equation}\label{eq:u_in_besov}
		u \in B^{\sigma}_{\tau}(L_{\tau}(\Omega))
		\qquad \text{for all} \qquad 
		0< \sigma < \overline{\sigma}
		\qquad \text{and} \qquad 
		\frac{1}{\tau} = \frac{\sigma}{2} + \frac{1}{p}.
	\end{equation}
\end{theorem}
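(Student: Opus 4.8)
The plan is to combine the locally weighted H\"older regularity provided by \autoref{Prop:hoelder_reg_plane2}(ii) with the generic Besov embedding of \autoref{thm:generic_besov}, specialised to dimension $d=2$ and differentiability order $\ell=1$. In short: the H\"older result puts $u$ into a space $C^{1,\alpha}_{\gamma,\loc}(\Omega)$, and then \autoref{thm:generic_besov} converts the pair consisting of the Sobolev smoothness $\overline{s}$ and the weighted H\"older data $(\alpha,\gamma)$ into Besov smoothness $\overline{\sigma}>\overline{s}$ along the adaptivity scale, once the parameters are checked to lie in the admissible region \link{ineq:range_alpha_gamma}.

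First I would check the hypotheses of \autoref{Prop:hoelder_reg_plane2}(ii). If $1<p\leq 2$ then $\max\{2/p,1\}=2/p$, and the standing assumption $\overline{s}>2/p$ gives $\overline{s}>\max\{2/p,1\}$; if $2<p<\infty$ then $\max\{2/p,1\}=1$ and the assumption $\overline{s}>1$ suffices. Hence \autoref{Prop:hoelder_reg_plane2}(ii) yields $u\in C^{1,\alpha}_{\gamma,\loc}(\Omega)$ with $\alpha=\alpha^*_q$ from \autoref{def:opt_hoelder_reg}, for every $\gamma>\alpha+\max\{0,\,1-\overline{s}+2/p\}$; when $q=\infty$ one first fixes the free value $\alpha^*_q$ close enough to its limiting bound ($1$ if $1<p\leq 2$, and $1/(p-1)$ if $2<p<\infty$).

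Next I would verify that, for a suitable choice of $\gamma$, the triple $(\overline{s},\alpha,\gamma)$ satisfies \link{ineq:range_alpha_gamma} with $\ell=1$. Since $q>2$ forces $\overline{s}<2$ while $\overline{s}>1$ in both regimes, we have $\overline{s}\in[1,2)$, so $\ell=1$ is admissible. The requirement $\overline{s}-1<\alpha\leq 1$ is exactly the upper bound imposed on $\overline{s}$ in each case --- namely $\overline{s}-1<1-2/q=\alpha^*_q$ for $1<p\leq 2$ and $\overline{s}-1<(1-2/q)/(p-1)=\alpha^*_q$ for $2<p<\infty$ (the $q=\infty$ cases being handled by the choice of $\alpha^*_q$), and $\alpha^*_q<1$ always. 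It then remains to exhibit $\gamma$ with
\[
\alpha+\max\Big\{0,\,1-\overline{s}+\frac{2}{p}\Big\}\;<\;\gamma\;<\;1+\alpha+\frac{1}{p}-\frac{\overline{s}}{2},
\]
the right-hand bound being the one coming from \link{ineq:range_alpha_gamma} for $d=2$, $\ell=1$. Cancelling $\alpha$, non-emptiness of this interval amounts to $\max\{0,\,1-\overline{s}+2/p\}<1+1/p-\overline{s}/2$: the first branch reads $\overline{s}<2+2/p$, which is clear from $\overline{s}<2$; the second reads $\overline{s}>2/p$, which is the standing hypothesis when $1<p\leq 2$ and follows from $\overline{s}>1>2/p$ when $2<p<\infty$. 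Picking such a $\gamma$ and invoking \autoref{thm:generic_besov} --- whose right-hand side hypothesis $f\in W^{-1}(L_{p'}(\Omega))$ is automatic since $f\in L_q(\Omega)$ with $q>2$ in dimension two --- produces the desired $\overline{\sigma}>\overline{s}$ and the membership \link{eq:u_in_besov}.

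I do not expect a conceptual obstacle; the delicate part is the bookkeeping. The argument branches into $1<p\leq 2$ versus $2<p<\infty$, and within each into $q<\infty$ versus $q=\infty$, and one must track strict versus non-strict inequalities so that the $\gamma$-interval above is genuinely non-empty while simultaneously $\overline{s}\in[1,2)$ and $\overline{s}-1<\alpha\leq 1$. If anything forces a re-examination it will be the endpoint case $q=\infty$, where $\alpha^*_q$ is not pinned down and must be chosen with the later constraints in mind.
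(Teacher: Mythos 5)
Your proposal is correct and follows essentially the same route as the paper's proof: apply \autoref{Prop:hoelder_reg_plane2}(ii) with $\alpha=\alpha^*_q$ to place $u$ in $C^{1,\alpha}_{\gamma,\loc}(\Omega)$, then feed this into \autoref{thm:generic_besov} with $d=2$, $\ell=1$, checking that $\alpha\in(\overline{s}-1,1]$ and that the $\gamma$-interval $\bigl(\alpha+\max\{0,1-\overline{s}+2/p\},\,1+\alpha+1/p-\overline{s}/2\bigr)$ is non-empty, which reduces to $\overline{s}>2/p$ and $\overline{s}<2$ exactly as in the paper. Your additional care with the $q=\infty$ endpoint (choosing the free value of $\alpha^*_q$ close enough to its limiting bound) is consistent with the paper's treatment and raises no issue.
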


\begin{proof}
Note that our assumptions particularly imply
\begin{equation}\label{ineq:range_s}
	\max\!\left\{1, \frac{2}{p}\right\} < \overline{s} < 2.
\end{equation}
Therefore, in view of \autoref{thm:generic_besov} (applied with $d=2$ and $\ell=1$), it suffices to find parameters $\alpha$ and $\gamma$ with $\overline{s}-1 < \alpha \leq 1$ and
\begin{align}\label{ineq:range_gamma_2d}
	0 < \gamma < 1 + \alpha + \frac{1}{p} - \frac{\overline{s}}{2} 
\end{align}
such that $u\in C^{1,\alpha}_{\gamma, \loc}(\Omega)$.
Observe that from \link{ineq:range_s} it follows
\begin{equation*}
	\alpha + \max\!\left\{0,1-\overline{s}+\frac{2}{p}\right\} < 1 + \alpha + \frac{1}{p} - \frac{\overline{s}}{2}
	\qquad \text{for all} \qquad
	0<\alpha\leq 1.
\end{equation*}
Thus, due to \autoref{Prop:hoelder_reg_plane2}(ii), choosing $\alpha=\alpha_q^*$ (as given in \autoref{def:opt_hoelder_reg}), there exists~$\gamma$ which satisfies \link{ineq:range_gamma_2d} such that $u\in C^{1,\alpha}_{\gamma, \loc}(\Omega)$.
To complete the proof, it remains to check that this choice of $\alpha$ belongs to the interval $(\overline{s}-1,1]$ which is obvious in view of \autoref{def:opt_hoelder_reg}, as well as our restrictions on $\overline{s}$.
\end{proof}

\begin{Rem}
Note that the bound $\overline{\sigma}$ in \autoref{thm:besov_reg_2d_lip} can be calculated explicitly, provided that the maximal Sobolev regularity $\overline{s}$ is known; see, e.g., the proof of \autoref{thm:besov_reg_2d_lip_max} below.
\end{Rem}

Now we are well-prepared to state and prove one of the main results of this paper. 
It shows that for a large range of parameters $p$ and $q$ the (unique) solution to \link{eq:dirichlet_condition}, i.e., to the $p$-Poisson with homogeneous Dirichlet boundary conditions, has a significantly higher Besov regularity compared to its Sobolev smoothness.
Indeed, as we shall see, on bounded Lipschitz domains $\Omega\subset\R^2$ this happens whenever $4/3<p<\infty$ and $\max\{4, 2\, p\} < q \leq \infty$.
Therefore, for the same range of parameters, the application of adaptive (wavelet) algorithms for the numerical treatment of \link{eq:dirichlet_condition} is completely justified.
Recall that from \autoref{Prop:sobolev_reg_poisson_savare} (and the subsequent remarks) it follows that the solution $u$ to  this problem is contained in $W^s(L_p(\Omega))$ for all $s<s^*$ given in \link{eq:max_sobolev_reg}.
Consequently, the proof of the subsequent result is obtained by applying \autoref{thm:besov_reg_2d_lip} with $\overline{s}=s^*$ together with some straightforward calculations.

\begin{theorem}[Besov regularity on Lipschitz domains in $2$D]\label{thm:besov_reg_2d_lip_max}
	Let $\Omega \subset \mathbb{R}^2$ be a bounded Lipschitz domain, $1 < p < \infty$, as well as $f \in L_q(\Omega)$ with $2 < q  \leq \infty$ and $q \geq p'$.
	Then the unique solution $u$ to the $p$-Poisson equation with homogeneous Dirichlet boundary conditions \eqref{eq:dirichlet_condition} satisfies 
	\begin{equation*}
		u \in B^{\sigma}_{\tau}(L_{\tau}(\Omega))
		\qquad \text{for all} \qquad 
		0< \sigma < \overline{\sigma}
		\qquad \text{and} \qquad 
		\frac{1}{\tau} = \frac{\sigma}{2} + \frac{1}{p},
	\end{equation*}	
	where
	\begin{align*}
		\overline{\sigma} 
		= \begin{cases} 
			\frac{3}{2} \quad & \text{if} \quad 1<p < 4/3 \text{ and } p' \leq q \leq \infty, \\
			\frac{3}{2} \quad & \text{if} \quad p = 4/3 \text{ and } 4 < q \leq \infty, \\
			3- \frac{2}{p} \quad & \text{if} \quad 4/3 < p \leq 2 \text{ and } (\frac{1}{p}-\frac{1}{2})^{-1} \leq q \leq \infty,\\
			2 - \frac{2}{q} \quad & \text{if} \quad 4/3 < p \leq 2 \text{ and } 4 < q < (\frac{1}{p}-\frac{1}{2})^{-1},\\
			\frac{3}{2} \quad & \text{if} \quad 4/3 \leq p < 2 \text{ and } p' \leq q \leq 4,\\
			\frac{3}{2}  \quad & \text{if} \quad p=2 \text{ and } 2 < q \leq 4,\\
			1 + \frac{1- 2/q}{p-1}  \quad & \text{if} \quad 2 < p < \infty \text{ and } 2\,p < q \leq \infty,\\
			1 + \frac{1}{p}  \quad & \text{if} \quad 2 < p < \infty  \text{ and } 2<q \leq 2\,p.
		\end{cases}
	\end{align*}
\end{theorem}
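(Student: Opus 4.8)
The plan is to feed the Sobolev regularity of \autoref{Prop:sobolev_reg_poisson_savare} into \autoref{thm:besov_reg_2d_lip} and to track the resulting exponent through the chain \autoref{thm:besov_reg_2d_lip}\,$\to$\,\autoref{thm:generic_besov}\,$\to$\,\autoref{theorem:embedding}. First I would record a baseline valid \emph{for all} admissible $(p,q)$: by \autoref{Prop:sobolev_reg_poisson_savare} and the remarks following it, the unique solution $u$ of \link{eq:dirichlet_condition} lies in $W^s(L_p(\Omega))$ for every $s<s^*$, with $s^*$ as in \link{eq:max_sobolev_reg}; and since on bounded Lipschitz domains one has the standard embedding $W^s(L_p(\Omega))\hookrightarrow B^{\sigma}_{\tau}(L_{\tau}(\Omega))$ whenever $0<\sigma<s$ and $1/\tau=\sigma/2+1/p$ (see, e.g., \cite{Dev1998,DevSha1993}), this already gives $u\in B^{\sigma}_{\tau}(L_{\tau}(\Omega))$ for all $\sigma<s^*$, i.e.\ $\overline{\sigma}\geq s^*$. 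Every line of the asserted formula in which $\overline{\sigma}$ equals $3/2$ (in the regime $1<p\leq 2$) or $1+1/p$ (in the regime $2<p<\infty$) is thereby settled, since there no improvement beyond $s^*$ is claimed.

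For the complementary ranges I would apply \autoref{thm:besov_reg_2d_lip} with $\overline{s}=s^*$. A direct check of its hypotheses shows that they are met exactly when $p>4/3$ and $q>4$ (if $1<p\leq 2$, where $2/p<s^*=3/2<2-2/q$) or when $q>2p$ (if $2<p<\infty$, where $s^*=1+1/p<1+(1-2/q)/(p-1)$), and these are precisely the parameter regions in which the claimed $\overline{\sigma}$ is strictly larger than $s^*$. On such a region the theorem produces some $\overline{\sigma}>\overline{s}$; to determine it explicitly I would unwind the proofs of \autoref{thm:besov_reg_2d_lip}, \autoref{thm:generic_besov} and \autoref{theorem:embedding}. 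With $d=2$, $\ell=1$, the H\"older exponent $\alpha=\alpha_q^*$ from \autoref{def:opt_hoelder_reg}, and the weight parameter $\gamma$ admissible all the way down to $\gamma_0=\alpha+\max\{0,\,1-s^*+2/p\}$ by \autoref{Prop:hoelder_reg_plane2}(ii), one obtains $\overline{\sigma}=\min\{\,\sup_{\gamma>\gamma_0}\sigma^*(\alpha,\gamma),\,2s^*\,\}$, where $\sigma^*$ is the piecewise-linear function \link{def:max_alpha}. Because $\sigma^*\leq 2<2s^*$ throughout (using $s^*>1$), the cut-off $2s^*$ is never active, so the entire computation reduces to deciding, as $\gamma\downarrow\gamma_0$, whether $\gamma_0$ falls into the first branch of \link{def:max_alpha} (then $\sigma^*=1+\alpha$) or into the second (then $\sigma^*=2(1+\alpha+1/p-\gamma_0)$).

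The transition between the two branches of \link{def:max_alpha} occurs exactly at $\gamma_0=(1+\alpha)/2+1/p$. Substituting $\gamma_0$ and $\alpha=\alpha_q^*$ and simplifying, one finds that for $2<p<\infty$ one always remains in the first branch, so $\overline{\sigma}=1+\alpha_q^*=1+(1-2/q)/(p-1)$ (with the value $1+1/(p-1)$ read off in the limit $q=\infty$); and that for $1<p\leq 2$ the second branch is reached precisely when $q\geq(1/p-1/2)^{-1}$, which gives $\overline{\sigma}=2(1+\alpha+1/p-\gamma_0)=3-2/p$, whereas for $4<q<(1/p-1/2)^{-1}$ one stays in the first branch and $\overline{\sigma}=1+\alpha_q^*=2-2/q$. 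Combining these formulas with the baseline $\overline{\sigma}=s^*$ on the ranges where \autoref{thm:besov_reg_2d_lip} does not apply, and treating $q=\infty$ by letting $\alpha_\infty^*$ tend to its upper bound (so that $\overline{\sigma}$ is a supremum; the monotonicity in $\gamma$ from \autoref{Rem:locally_weighted_hoelder_spaces} keeps the $C^{1,\alpha}_{\gamma,\loc}$-memberships consistent), reproduces the eight cases in the statement; continuity of $\overline{\sigma}$ across the interfaces $q=4$, $p=4/3$, $q=2p$ and $q=(1/p-1/2)^{-1}$ is a convenient consistency check. I do not expect a single hard estimate here — all the analytic substance already sits in \autoref{theorem:embedding} and \autoref{Prop:hoelder_reg_plane2}; the real obstacle is the bookkeeping needed to keep the admissible-$\gamma$ window, the branch analysis of \link{def:max_alpha}, and the applicability window of \autoref{thm:besov_reg_2d_lip} simultaneously correct over the whole parameter range.
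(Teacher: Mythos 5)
Your proposal is correct and follows essentially the same route as the paper's proof: the lines with $\overline{\sigma}=s^*$ are settled by Savar\'e's Sobolev regularity plus standard embeddings into the adaptivity scale, and the improved lines (the third, fourth and seventh) are obtained by unwinding \autoref{thm:besov_reg_2d_lip} with $\alpha=\alpha_q^*$, the minimal admissible weight $\gamma$ from \autoref{Prop:hoelder_reg_plane2}(ii), and the branch analysis of \link{def:max_alpha} at $\gamma=(1+\alpha)/2+1/p$, exactly as in the paper (including the observation that the cut-off $\tfrac{d}{d-1}\overline{s}=2s^*$ is never active). Your computed transition $q=(1/p-1/2)^{-1}$ and the resulting values $3-2/p$, $2-2/q$, $1+(1-2/q)/(p-1)$ coincide with the paper's case distinction, so no gap remains.
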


\begin{proof}
\emph{Step 1.}
Let us start with the cases where $\overline{\sigma}=s^*$, i.e., where $\overline{\sigma}$ equals $3/2$ or $1+1/p$. Then from classical embeddings of Besov spaces it follows that $u\in W^s(L_p(\Omega))$ for all $0<s<\overline{s}$ implies that $u$ also belongs to $B^{s}_p(L_p(\Omega))$ for all these $s$ which in turn yields the claim; cf.\ \autoref{rem:Besov_prop}.

\emph{Step 2.}
We are left with proving the assertion for the third, fourth, and seventh line in the definition of $\overline\sigma$.
According to (the proof of) \autoref{thm:besov_reg_2d_lip} we know that in all these remaining cases \autoref{Prop:hoelder_reg_plane2}(ii) ensures the existence of some reasonably small $\gamma$ such that $u\in C^{\ell,\alpha}_{\gamma, \loc}(\Omega)$, where $\alpha=\alpha_q^*$ (as given in \autoref{def:opt_hoelder_reg}) and $\ell=1$.
In fact, it can be checked that we can use
\begin{equation*}
	\gamma 
	= \alpha + \varepsilon + \begin{cases}
		2/p-1/2 \quad & \text{if} \quad p<2,\\
		1/p,  	\quad & \text{if} \quad p\geq 2
	\end{cases}
\end{equation*}
with arbitrarily small $\varepsilon>0$. 
As shown in the proof of \autoref{thm:generic_besov} (which we used to derive \autoref{thm:besov_reg_2d_lip}), the desired quantity $\overline{\sigma}$ then is given by $\sigma^*$ defined in \link{def:max_alpha} in \autoref{theorem:embedding}. 
Thus, we need to determine whether our choice of $\gamma$ is smaller or larger than $(1+\alpha)/2+1/p$. 
Note that, according to \autoref{thm:generic_besov}, we already know that for all cases of interest it is smaller than $1+\alpha+1/p$.
It turns out that for $4/3<p \leq 2$ and $(1/p-1/2)^{-1} \leq q \leq \infty$, i.e., for the constellation described in the third line, the second case in \link{def:max_alpha} applies, i.e., then 
\begin{equation*}
	\frac{1+\alpha}{2}+\frac{1}{p} \leq \gamma < 1 + \alpha +\frac{1}{p}.
\end{equation*}
Consequently, for these $p$ and $q$, the quantity $\overline{\sigma}=\sigma^*$ is given by $2(1+\alpha+1/p-\gamma)=3-2/p-\varepsilon$, where $\varepsilon$ can be neglected since it can be chosen arbitrarily small.

For the remaining two ranges for $p$ and $q$ the chosen weight $\gamma$ is small enough such that the first case in \link{def:max_alpha} applies. 
Thus, for $p$ and $q$ as described in the fourth and seventh line, we obtain $\overline{\sigma}=\sigma^*=\ell+\alpha$ with $\ell=1$ and $\alpha=\alpha_q^*$.
This finishes the proof.
\end{proof}

In the more restrictive (but practically more important) setting of polygonal domains slightly better Besov regularity assertions for the unique solutions to \eqref{eq:dirichlet_condition} with $f\in L_q(\Omega)$ can be deduced using our method, at least for some cases.
For this purpose, we will employ a further Sobolev regularity result which was shown by Ebmeyer \cite[Corollary~2.3]{Ebm2002} for polyhedral Lipschitz domains in arbitrary dimensions:

\begin{Prop} \label{Prop:sobolev_reg_poisson_Ebmeyer_short}
For $d\geq 2$ let $\Omega \subset \mathbb{R}^d$ be a bounded polyhedral Lipschitz domain and for $1 < p < \infty$ let $f \in L_{p'}(\Omega)$. Then the unique solution $u \in W^1(L_p(\Omega))$ to (\ref{eq:dirichlet_condition}) satisfies
\begin{align*}
	\abs{\nabla u} \in L_t(\Omega)
\qquad \text{for all} \qquad t < \frac{d}{d-1} \, p.
\end{align*}
\end{Prop}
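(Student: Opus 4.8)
The statement coincides with \cite[Corollary~2.3]{Ebm2002}; let me nonetheless outline the strategy by which one proves it. The plan is to upgrade the energy integrability $\nabla u\in L_p(\Omega)$ by the factor $d/(d-1)$ through a fractional second-order estimate for the \emph{nonlinear stress field}
\[
	V(\nabla u):=\abs{\nabla u}^{(p-2)/2}\,\nabla u,
	\qquad\text{so that}\qquad
	\abs{V(\nabla u)}=\abs{\nabla u}^{p/2}.
\]
The heart of the argument is to show that $V(\nabla u)$ belongs to the Nikol'skij-type Besov space $B^{1/2}_\infty(L_2(\Omega))$, with an a priori bound for its norm in terms of $\norm{f\sep L_{p'}(\Omega)}$ and $\norm{\nabla u\sep L_p(\Omega)}$ (the latter being already under control by the energy estimate and \autoref{Prop:existence}). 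Granting this, the embedding $B^{1/2}_\infty(L_2(\Omega))\hookrightarrow L_t(\Omega)$, valid on the bounded Lipschitz domain $\Omega$ for every $t<2d/(d-1)$ (it fails at the endpoint $t=2d/(d-1)$ because the fine index is $\infty$; cf.\ \autoref{rem:Besov_prop}(iii)), yields $\abs{\nabla u}^{p/2}\in L_t(\Omega)$ for all $t<2d/(d-1)$, i.e.\ $\abs{\nabla u}\in L_s(\Omega)$ for all $s<pd/(d-1)$, which is precisely the claim.

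It remains to sketch the fractional estimate for $V(\nabla u)$. In the interior the argument is classical (Uhlenbeck, DiBenedetto, Tolksdorf): testing \link{eq:p-Poisson} with the second difference quotient $\Delta_{-h}\Delta_h u$, integrating by parts, and using the monotonicity inequality $\distr{\abs{\xi}^{p-2}\xi-\abs{\zeta}^{p-2}\zeta}{\xi-\zeta}\gtrsim\abs{V(\xi)-V(\zeta)}^2$ (valid for all $1<p<\infty$) controls $\sup_{0<\abs{h}\leq h_0}\abs{h}^{-1}\norm{\Delta_h V(\nabla u)\sep L_2(K)}^2$ on every compactly contained subdomain $K$, which means $V(\nabla u)\in B^{1/2}_\infty(L_2(K))$. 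The work is at the boundary. Near a flat face of the polyhedron one flattens locally and uses the homogeneous Dirichlet condition to perform an \emph{odd} reflection of $u$ across the face; the reflected function solves a $p$-Poisson equation with the correspondingly reflected right-hand side, so tangential difference quotients may be taken freely, while the missing normal second derivative is recovered algebraically from the equation. Near edges and vertices one exploits the locally conical structure of a polyhedron: a dyadic family of translated and rescaled balls covers a neighbourhood of the singular set, on each ball the interior/flat-face estimate applies with \emph{scale-invariant} constants, and the pieces are summed; the polyhedral (in particular Lipschitz) geometry is exactly what makes this covering and summation converge and keeps the constants depending only on $\norm{f\sep L_{p'}(\Omega)}$, $\norm{\nabla u\sep L_p(\Omega)}$, and $\Omega$.

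The principal obstacle is this boundary analysis up to the lower-dimensional edges and vertices: while odd reflection across a single flat face is routine, near an edge the reflections associated with two adjacent faces interact, so one cannot reflect globally and must instead rely on the dyadic decomposition together with careful bookkeeping of the rescaled estimates --- this is the technical core of \cite{Ebm2002}. It is worth noting why the gain is \emph{exactly} the factor $d/(d-1)$: the fractional order $1/2$ of the stress field $V(\nabla u)$ is dimension-independent, so the only place $d$ enters is the Sobolev-type embedding $B^{1/2}_\infty(L_2(\Omega))\hookrightarrow L_t(\Omega)$, which after undoing the substitution $\abs{V(\nabla u)}=\abs{\nabla u}^{p/2}$ contributes precisely this factor.
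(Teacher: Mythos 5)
Your proposal takes the same route as the paper: \autoref{Prop:sobolev_reg_poisson_Ebmeyer_short} is not proved there but simply quoted from \cite[Corollary~2.3]{Ebm2002}, which you correctly identify as the source. Your accompanying sketch of Ebmeyer's argument --- the Nikol'skij-type estimate $V(\nabla u)\in B^{1/2}_\infty(L_2(\Omega))$ obtained via difference quotients and the monotonicity of the $p$-Laplacian, combined with the embedding $B^{1/2}_\infty(L_2(\Omega))\hookrightarrow L_t(\Omega)$ for $t<2d/(d-1)$ --- is consistent with how that result is actually established, so there is nothing to correct.
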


\begin{Rem}
The example described in \autoref{rem:sharp_sobolev} shows that, for $d=2$, Ebmeyer's result (\autoref{Prop:sobolev_reg_poisson_Ebmeyer_short}) is sharp, meaning that there are cases in which 
\begin{align*}
	\abs{\nabla u} \notin L_t(\Omega)
\qquad \text{if} \qquad t > 2p=\frac{d}{d-1} \, p.
\end{align*}
\end{Rem}

Our improved Besov regularity result for solutions to $p$-Poisson equations with homogeneous boundary conditions \link{eq:dirichlet_condition} on bounded polygonal domains then reads as follows.

\begin{theorem}[Besov regularity on polygonal domains] \label{theorem:Besov_reg_2d}
	Let $\Omega \subset \mathbb{R}^2$ denote a bounded polygonal domain and let $1 < p < \infty$, as well as $f \in L_q(\Omega)$ with $2 < q  \leq \infty$ and $q \geq p'$.
		Then the unique solution $u$ to the $p$-Poisson equation with homogeneous Dirichlet boundary conditions \eqref{eq:dirichlet_condition} satisfies 
	\begin{equation*}
		u \in B^{\sigma}_{\tau}(L_{\tau}(\Omega))
		\qquad \text{for all} \qquad 
		0< \sigma < \overline{\sigma}
		\qquad \text{and} \qquad 
		\frac{1}{\tau} = \frac{\sigma}{2} + \frac{1}{p},
	\end{equation*}	
	where
	\begin{align*}
		\overline{\sigma} 
		= \begin{cases} 
			2 - \frac{2}{q} \quad & \text{if} \quad 1<p < 4/3 \text{ and } p' \leq q \leq \infty,\\
			2 - \frac{2}{q} \quad & \text{if} \quad p=4/3 \text{ and } 4 < q \leq \infty,\\
			2 - \frac{2}{q} \quad & \text{if} \quad 4/3 < p \leq 2 \text{ and } 4 < q \leq \infty,\\
			\frac{3}{2} \quad & \text{if} \quad 4/3 \leq p < 2 \text{ and } p' \leq q \leq 4,\\
			\frac{3}{2}  \quad & \text{if} \quad p=2 \text{ and } 2 < q \leq 4,\\
			1 + \frac{1- 2/q}{p-1}  \quad & \text{if} \quad 2 < p < \infty \text{ and } 2\,p < q \leq \infty,\\
			1 + \frac{1}{p}  \quad & \text{if} \quad 2 < p < \infty  \text{ and } 2<q \leq 2\,p.
		\end{cases}
	\end{align*}
\end{theorem}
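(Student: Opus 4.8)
The plan is to follow the same route as in the proof of \autoref{thm:besov_reg_2d_lip_max}, but to replace the Savar\'e-based locally weighted H\"older estimate \autoref{Prop:hoelder_reg_plane2}(ii) by the Ebmeyer-based variant \autoref{Prop:hoelder_reg_plane2}(i), whose sharper gradient integrability on polygonal domains produces a smaller admissible weight parameter $\gamma$. Concretely, one first records that $f\in L_q(\Omega)\hookrightarrow L_{p'}(\Omega)\hookrightarrow W^{-1}(L_{p'}(\Omega))$, so that \autoref{Prop:existence} provides the unique solution $u$, \autoref{Prop:sobolev_reg_poisson_savare} yields $u\in W^s(L_p(\Omega))$ for all $s<s^*$ with $s^*$ from \link{eq:max_sobolev_reg}, and \autoref{Prop:sobolev_reg_poisson_Ebmeyer_short} (with $d=2$) gives $\abs{\nabla u}\in L_t(\Omega)$ for all $t<2p$. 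We then invoke \autoref{thm:generic_besov} with $d=2$ and $\ell=1$, where $\overline{s}=s^*\in[1,2)$ plays the role of the maximal Sobolev smoothness and $\alpha=\alpha_q^*$ (from \autoref{def:opt_hoelder_reg}) the H\"older exponent.

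As in \autoref{thm:besov_reg_2d_lip_max}, the cases in which the claimed $\overline{\sigma}$ equals $s^*$ (the fourth, fifth, and seventh line) require no H\"older information at all: there $u\in W^s(L_p(\Omega))\hookrightarrow B^s_p(L_p(\Omega))\hookrightarrow B^{\sigma}_{\tau}(L_{\tau}(\Omega))$ for every $\sigma<s<s^*$ along the adaptivity scale $1/\tau=\sigma/2+1/p$ (cf.\ \autoref{rem:Besov_prop}), which already gives \eqref{eq:u_in_besov}. For the remaining four constellations (first, second, third, and sixth line) one uses that, by \autoref{Prop:hoelder_reg_plane2}(i), for every $t<2p$ the solution lies in $C^{1,\alpha}_{\gamma,\loc}(\Omega)$ with $\alpha=\alpha_q^*$ and $\gamma=\alpha+2/t$; letting $t\uparrow 2p$ and using monotonicity in $\gamma$, every weight $\gamma>\alpha+1/p$ is admissible.

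It then remains to check the hypotheses of \autoref{thm:generic_besov}. The condition $\overline{s}-1<\alpha\leq 1$ turns out to be equivalent to $q>4$ if $1<p\leq2$ and to $q>2p$ if $2<p<\infty$, which is precisely what is assumed in the four relevant lines; and since $s^*<2$, a weight $\gamma$ chosen sufficiently close to $\alpha+1/p$ from above still satisfies $\gamma<1+\alpha+1/p-s^*/2$, so the theorem applies. Because $\alpha_q^*<1$, such a small $\gamma$ also fulfils $\gamma<(1+\alpha)/2+1/p$, hence the first branch of \link{def:max_alpha} is active and $\sigma^*=1+\alpha_q^*$, which equals $2-2/q$ for $1<p\leq2$ and $1+(1-2/q)/(p-1)$ for $2<p<\infty$, matching $\overline{\sigma}$ in all four lines. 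A short computation ($p>1$ suffices when $p\leq2$, and $p>\sqrt{2}$ when $p>2$) shows $\sigma^*<2\,s^*=\tfrac{d}{d-1}\overline{s}$, so the minimum appearing in (the proof of) \autoref{thm:generic_besov} is $\sigma^*$; taking the exceptional parameter there to zero yields $u\in B^{\sigma}_{\tau}(L_{\tau}(\Omega))$ for all $\sigma<\sigma^*=\overline{\sigma}$.

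The main obstacle is not analytic but organisational: one has to verify, line by line in the seven-case table, that the fixed choice $\alpha=\alpha_q^*$ together with a weight $\gamma$ slightly larger than $\alpha+1/p$ indeed meets the range restrictions of \autoref{thm:generic_besov} and that the resulting $\sigma^*$ from \link{def:max_alpha} coincides with the asserted $\overline{\sigma}$; in particular one must confirm that the boundary values $q=4$ (resp.\ $q=2p$) fall into the $s^*$-lines, so that the two regimes match continuously. All of this reduces to elementary inequalities in $p$ and $q$.
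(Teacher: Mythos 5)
Your proposal is correct and relies on exactly the same ingredients as the paper's proof: Savar\'e's Sobolev regularity, Ebmeyer's gradient integrability (\autoref{Prop:sobolev_reg_poisson_Ebmeyer_short}) converted into membership in $C^{1,\alpha_q^*}_{\gamma,\loc}(\Omega)$ for every $\gamma>\alpha_q^*+1/p$ via \autoref{Prop:hoelder_reg_plane2}(i), and the embedding \autoref{theorem:embedding}. The only (harmless) difference is organizational: the paper applies \autoref{theorem:embedding} directly with the crude smoothness $s=1-\varepsilon$ coming from $u\in W^1(L_p(\Omega))$, so the constraint $\sigma<\tfrac{d}{d-1}s$ never binds and the final bound is simply $\max\{1+\alpha_q^*,\,s^*\}$ uniformly over all parameter constellations, whereas you feed $\overline{s}=s^*$ into \autoref{thm:generic_besov}, which forces the up-front case split $q>4$ (resp.\ $q>2p$) and the extra checks on $\alpha$ and $\gamma$ that you carry out — both routes yield the same table of values for $\overline{\sigma}$.
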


Before giving the proof of this assertion we want to stress that in the first three cases, as well as in the sixth one, the upper bound $\overline{\sigma}$ for the regularity of the solution $u$ in the adaptivity scale of Besov spaces is strictly larger than $\overline{s}=s^*$ as defined in \link{eq:max_sobolev_reg} which is considered to be a sharp bound for the regularity in the Sobolev scale; see \autoref{rem:sharp_sobolev}.
Hence, in contrast to \autoref{thm:besov_reg_2d_lip_max} (which deals with general bounded Lipschitz domains in $\R^2$), on polygonal domains $u$ gains some additional regularity also in the range $1<p\leq 4/3$ (except for the case $p=4/3$ and $q=4$). 
Furthermore, observe that for the case of $p\in(4/3,2)$ and large $q$ the value $3-2/p$ for Lipschitz domains is strictly worse than $2-2/q$ obtained in \autoref{theorem:Besov_reg_2d} for polygonal domains.
Finally we note that, given some fixed $p$, in all cases in which $\overline{\sigma}>\overline{s}$ this quantity grows  with increasing integrability $q$ of the right-hand side $f$. This is not the case for $s^*$.
Accordingly, the largest gain $\overline{\sigma}-\overline{s}$ is obtained for $f\in L_\infty(\Omega)$. 
This situation is illustrated in \autoref{fig:Besov_Sobolev_reg} below.
\begin{figure}[htb]
	\begin{center}
	\scalebox{.645}{\input{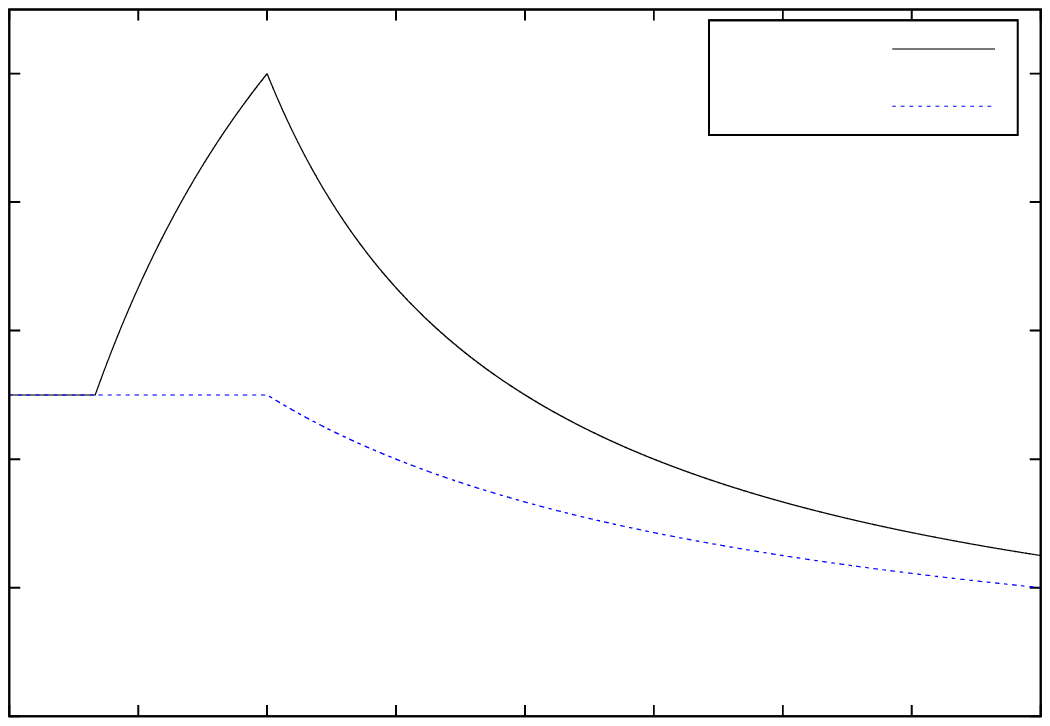}}
	\scalebox{.645}{\input{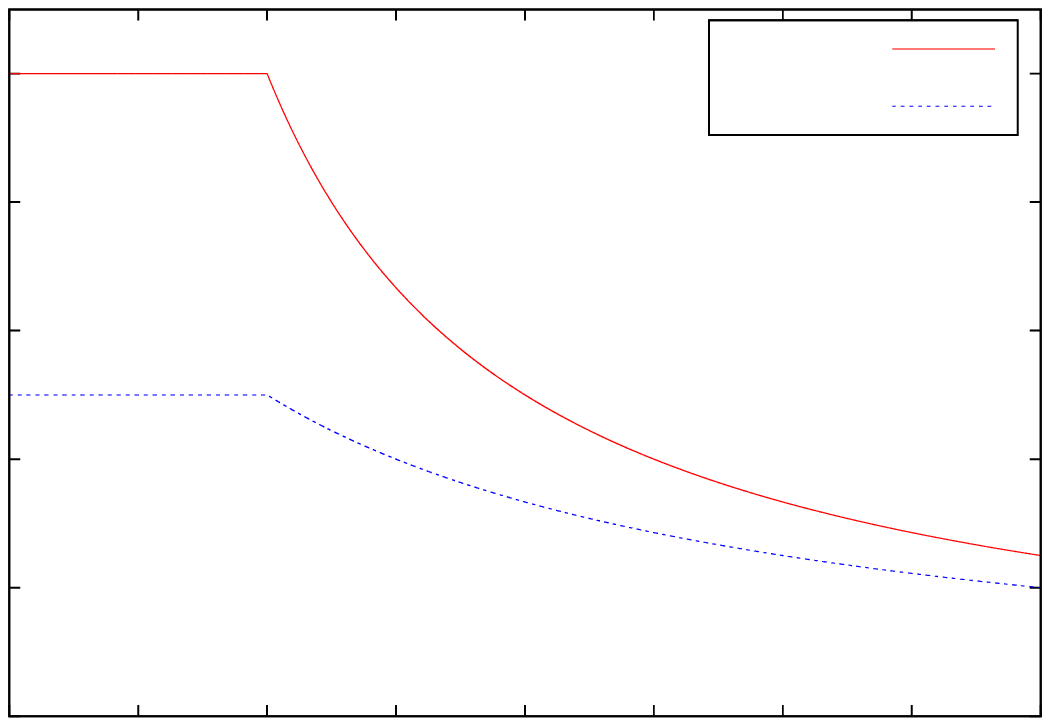}}
	\caption{Bounds $\overline{\sigma}$ and $s^*$ for the regularity of solutions $u$ to  \link{eq:dirichlet_condition} with $f \in L_{\infty}(\Omega)$ on bounded $2$D Lipschitz domains (left) and bounded polygonal domains (right), measured in $B^{\sigma}_{\tau}(L_{\tau}(\Omega))$, $1/ \tau = \sigma / 2 + 1/p$, and in $W^s(L_p(\Omega))$, respectively.}
	\label{fig:Besov_Sobolev_reg}
	\end{center}
\end{figure}

\begin{proof}[Proof (of \autoref{theorem:Besov_reg_2d})]
\emph{Step 1.}
Since $q\geq p'$, we have that $L_q(\Omega)\hookrightarrow L_{p'}(\Omega)\hookrightarrow W^{-1}(L_{p'}(\Omega))$.
Consequently, \autoref{Prop:existence} assures a unique solution $u\in W^{1}(L_p(\Omega))$. 
Then \autoref{rem:Besov_prop}(iv) implies $u\in B^{1-\varepsilon}_{p}(L_p(\Omega))$ for all $\varepsilon\in(0,1)$. 
Moreover, by \autoref{Prop:hoelder_reg_plane2}(i) we know that $u \in  C^{1,\alpha}_{\gamma, \loc}(\Omega)$ for all $\gamma \geq \alpha + 2/t$, with $\alpha=\alpha_q^*$ given in \autoref{def:opt_hoelder_reg} and $t>2$ such that $\abs{\nabla u}\in L_t(\Omega)$. 
\autoref{Prop:sobolev_reg_poisson_Ebmeyer_short} shows that the latter condition is fulfilled for all $t<2p$, i.e., for all $2/t$ strictly larger (but arbitrary close to) $1/p$.
Thus, since $\alpha \in (0,1)$, we can choose $\gamma$ such that
\begin{align*}
	\alpha + \frac{1}{p}<\gamma < \frac{1+\alpha}{2}+\frac{1}{p}.
\end{align*} 
Then, for this choice of $\alpha$ and $\gamma$, as well as $d=2$, $s=1-\varepsilon$, and $\ell=1$, we apply \autoref{theorem:embedding} (note that every polygonal domain $\Omega\subset\R^2$ is Lipschitz!) and conclude that
$u$ belongs to $ B^{\sigma}_{\tau}(L_{\tau}(\Omega))$, $1/\tau=\sigma/2+1/p$, for all 
\begin{align*}
	0 
	< \sigma 
	< \min\!\left\{ 1+ \alpha, \frac{2}{2-1} \, (1-\varepsilon)\right\}  	
	= 1 + \alpha,
\end{align*}
where the last equality holds provided that $\varepsilon>0$ is chosen sufficiently small.

\emph{Step 2.}
Since $f\in L_{p'}(\Omega)$, we furthermore can employ \autoref{Prop:sobolev_reg_poisson_savare} (as well as the subsequent remarks) to see that $u\in W^s(L_p(\Omega))$ for all $s<s^*$. This implies that $u$ belongs to $B^{s}_p(L_p(\Omega))$ and $B^{\sigma}_\tau(L_\tau(\Omega))$ for all $s$ and $\sigma$ less than $s^*$, respectively.

In conclusion, combining both steps yields
\begin{equation*}
	u \in B^{\sigma}_\tau(L_\tau(\Omega))
	\qquad \text{for all} \qquad 
	0<\sigma < \max\{1+\alpha, s^*\}
	\qquad \text{and} \qquad 
	\frac{1}{\tau} = \frac{\sigma}{2} + \frac{1}{p}.
\end{equation*}
Now the claim directly follows from the definitions of $\alpha=\alpha_q^*$ and $s^*$.
\end{proof}

\begin{Rem}
We add some comments on our main results in \autoref{thm:besov_reg_2d_lip_max} and \ref{theorem:Besov_reg_2d}, resp.:
\begin{itemize}
	\item[(i)] The restriction $q \geq p'$ in \autoref{thm:besov_reg_2d_lip_max} can be weakened. Anyhow, note that for $p$ in the vicinity of $1$ and $q$ close to $2$, \autoref{Prop:sobolev_reg_poisson_savare} only guarantees that the unique solution $u$ to \link{eq:dirichlet_condition} satisfies $u\in W^{s}(L_p(\Omega))$ for all $s<\overline{s}$ with some $1\leq \overline{s}<s^*$.
	\item[(ii)] According to \cite[Section~5.3]{Ebm2002} \autoref{Prop:sobolev_reg_poisson_Ebmeyer_short} remains valid for special classes of bounded Lipschitz domains with polyhedral structure. Hence, also \autoref{theorem:Besov_reg_2d} applies to this slightly generalized situation.
	
	\item[(iii)] Observe that for large $q$ our bound $\overline{\sigma}$ in \autoref{theorem:Besov_reg_2d} always equals $1+\alpha$, where $\alpha=\alpha^*_q$ is the local H\"older exponent given in \autoref{def:opt_hoelder_reg} which is known to be optimal at least for $p>2$; see \autoref{rem:opt_hoelder}. 
Thus, by \link{def:max_alpha}, as well as the subsequent statements, we see that the results stated in \autoref{theorem:Besov_reg_2d} are the best possible we can achieve by our method (i.e., by \autoref{theorem:embedding}). 
On the other hand, we do not know whether they are sharp, as (for general $p$) in the current literature there seem to exist no results at all which address comparable regularity questions.
However, for example in the case of the classical Laplacian ($p=2$) Besov regularity larger than two cannot be expected for general right-hand sides of smoothness zero, since then we deal with a linear operator of order two.
\end{itemize}
\end{Rem}

Finally, let us briefly consider \emph{$p$-harmonic} functions, i.e., solutions to the \emph{$p$-Laplace equation}
\begin{align}
	\div{\abs{\nabla u}^{p-2} \nabla u} 
	= 0 \qquad \text{in} \quad \Omega, \label{eq:p-laplace} 
\end{align}
where $\Omega \subset \mathbb{R}^2$ is a bounded domain and $1 < p < \infty$.
In \cite[Remark 2.5(iv)]{Ebm2002} Ebmeyer states that if $\Omega$ is a bounded polyhedral Lipschitz domain (of arbitrary dimension $d\geq 2$), then all solutions to \eqref{eq:p-laplace} with boundary data $g\in W^{1}(L_p(\partial\Omega))$ are as well contained in $W^{s}(L_p(\Omega))$ for all $s<s^*$ defined by \eqref{eq:max_sobolev_reg}. 
However, he does not provide a proof of this statement.
Using this claim, the arguments in Step 1 of the proof of \autoref{theorem:Besov_reg_2d} would imply that all $p$-harmonic functions $u$ on bounded polygonal domains $\Omega$ satisfy 
\begin{equation}\label{p_harmonic_besov_reg}
	u \in B^{\sigma}_{\tau}(L_{\tau}(\Omega))
	\quad \text{for all} \quad 
	0 < \sigma 
	< \begin{cases} 
			2 					\quad & \text{if} \quad 1<p \leq 2,\\
			1 + \frac{1}{p-1}  	\quad & \text{if} \quad 2 < p < \infty\\
		\end{cases}
	\quad \text{and} \quad 
	\frac{1}{\tau} = \frac{\sigma}{2} + \frac{1}{p}.
\end{equation}
In addition, we remark that the local H\"older regularity of two-dimensional $p$-harmonic functions is known to be higher than for general solutions to the $p$-Poisson equation \link{eq:p-Poisson}: In fact, Iwaniec and Manfredi \cite{IwaMan1989} showed that in the case $d=2$ all $p$-harmonic functions are contained in $C^{\ell, \alpha}_{\loc}(\Omega)$, where $\ell \in \N$ and $0< \alpha \leq 1$ are determined by the formula
\begin{align}\label{eq:reg_parameter_p_harmonic}
	\ell+\alpha 
	= 1 + \frac{1}{6} \left( 1 + \frac{1}{p-1} + \sqrt{1 + \frac{14}{p-1} + \frac{1}{(p-1)^2}} \right). 
\end{align}
Furthermore, for $p \neq 2$ this result is known to be sharp; see \cite{IwaMan1989}. Note that for all $1 < p < \infty$ the right-hand side of \link{eq:reg_parameter_p_harmonic} indeed is larger than $1 + \alpha^*_\infty$.
In conclusion, one might expect to achieve even higher Besov regularity for $p$-harmonic functions than stated in \link{p_harmonic_besov_reg}. 
To prove this conjecture (by means of our embedding result \autoref{theorem:embedding}), we would need to exploit the sharp H\"older regularity \eqref{eq:reg_parameter_p_harmonic} instead of \autoref{Prop:hoelder_reg_plane}; provided we could show that $p$-harmonic functions belong to $C^{\ell, \alpha}_{\gamma, \loc}(\Omega)$ for these $\ell$ and $\alpha$, as well as for sufficiently small values of $\gamma$, and provided that Ebmeyer's claim holds true.
Unfortunately, sufficient estimates for the parameter $\gamma$ do not seem to exist, yet.

\section{Appendix} \label{sec:App}
This final part of the paper is concerned with estimates needed in our proofs, as well as with auxiliary assertions that are of interest on their own.

To begin with, we state the following well-known Whitney-type estimates which can be found, e.g., in DeVore \cite[Subsection 6.1]{Dev1998}.
Here and in what follows we let $\Pi_k(S)$ denote the set of all polynomials $\P$ on some bounded and simply connected set $S \subset\R^d$, $d\in\N$, which possess a total degree $\deg\P$ not larger than $k\in\N_0$.
As usual, $\lceil x \rceil$ (and $\lfloor x \rfloor$, respectively) means the smallest (largest) integer larger (smaller) or equal to $x\in\R$.

\begin{Prop}[Whitney's estimate] \label{Prop:Whitney}
	For $d\in\N$ let $Q$ denote an arbitrary cube in $\R^d$ with sides parallel to the coordinate axes.
	Moreover, 
	\begin{itemize}
		\item[(i)] let $1 \leq p,q \leq \infty$ and $k\in\N$ with $k > d\,\max\{0, 1/q - 1/p\}$. Then it holds
			\begin{align*}
				\inf_{\P \in \Pi_{k - 1}(Q)} \norm{f - \P \sep L_p(Q) } 
				\leq C \abs{Q}^{k/d + 1/p - 1/q} \abs{f}_{W^k(L_q(Q))},
			\end{align*}
			whenever the right-hand side is finite. Therein the constant $C$ depends only on $k$.
		\item[(ii)] let $1 \leq p \leq \infty$ and $0 < q \leq \infty$. 
			Furthermore, assume that $0<t< \infty$ satisfies $t \geq d\,\max\{0, 1/q - 1/p\}$. Then we have
			\begin{align*}
				\inf_{\P \in \Pi_{\lceil t \rceil - 1}(Q)} \norm{f - \P \sep L_p(Q) } 
				\leq C \abs{Q}^{t/d + 1/p - 1/q} \abs{f}_{B^t_q(L_q(Q))},
			\end{align*}
			whenever the right-hand side is finite. Here the constant $C$ depends only on $t$.
	\end{itemize}
\end{Prop}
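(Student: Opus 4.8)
To prove \autoref{Prop:Whitney}, the plan is to rescale everything to a fixed reference cube and then invoke two classical facts---the Bramble--Hilbert/Deny--Lions lemma for part~(i), and the classical Whitney inequality together with an elementary estimate for the modulus of smoothness for part~(ii); the result is of course well known, see DeVore \cite{Dev1998}. \textbf{First} I would carry out the reduction to the unit cube. Writing $h=\abs{Q}^{1/d}$ for the side length of $Q$ and letting $\phi(y)=a+h\,y$ be the affine bijection carrying $Q_0=[0,1]^d$ onto $Q$, I set $g=f\circ\phi$; since $\P\mapsto\P\circ\phi$ maps $\Pi_{k-1}(Q)$ bijectively onto $\Pi_{k-1}(Q_0)$ and the change of variables together with the chain rule give $\norm{f-\P \sep L_p(Q)}=h^{d/p}\norm{g-\P\circ\phi \sep L_p(Q_0)}$, $\abs{f}_{W^k(L_q(Q))}=h^{d/q-k}\abs{g}_{W^k(L_q(Q_0))}$ and $\abs{f}_{B^t_q(L_q(Q))}=h^{d/q-t}\abs{g}_{B^t_q(L_q(Q_0))}$, the powers of $h$ on the two sides of each asserted inequality cancel exactly. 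Hence it suffices to establish both parts on $Q_0$ with a constant depending only on $d$ and $k$ (resp.\ $d,t,p,q$), the $\abs{Q}$-factors being reinstated by this scaling in the end.

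\textbf{For part~(i)} on $Q_0$ I would use the Bramble--Hilbert lemma: the hypothesis $k>d\max\{0,1/q-1/p\}$ is precisely the subcritical Sobolev condition guaranteeing the compact embedding $W^k(L_q(Q_0))\hookrightarrow\hookrightarrow L_p(Q_0)$, and combining it with the Deny--Lions lemma---$\abs{\cdot}_{W^k(L_q(Q_0))}$ descends to a norm on $W^k(L_q(Q_0))/\Pi_{k-1}(Q_0)$ equivalent to the quotient norm---one reads off $\inf_{\P\in\Pi_{k-1}(Q_0)}\norm{g-\P \sep L_p(Q_0)}\lesssim\abs{g}_{W^k(L_q(Q_0))}$. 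For a constructive bound with a transparent constant one may instead take $\P=T_{k-1}g$, an averaged Taylor polynomial of degree $k-1$ over a fixed ball $B\subset Q_0$, for which the Sobolev integral representation expresses $g-T_{k-1}g$ as a finite sum of Riesz-type potentials $\int_{Q_0}\abs{x-y}^{k-d}D^\nu g(y)\d y$, $\abs{\nu}=k$; the $L_q(Q_0)\to L_p(Q_0)$ mapping property of these potentials (Hardy--Littlewood--Sobolev on a bounded domain, valid exactly under the stated restriction on $k$) then yields the estimate.

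\textbf{For part~(ii)} on $Q_0$ I would distinguish three cases. If $p=q$, the classical Whitney inequality gives $\inf_{\P\in\Pi_{r-1}(Q_0)}\norm{g-\P \sep L_q(Q_0)}\le C\,\omega_r(g,1,Q_0)_q$ with $r=\lceil t\rceil$ (an admissible difference order for $\abs{\cdot}_{B^t_q}$ when $t\notin\N$, the integer case needing a routine extra step), and since $\omega_r(g,\cdot,Q_0)_q$ is non-decreasing, $\omega_r(g,1,Q_0)_q^q\lesssim\int_1^2[\xi^{-t}\omega_r(g,\xi,Q_0)_q]^q\,\tfrac{\d\xi}{\xi}\le\abs{g}_{B^t_q(L_q(Q_0))}^q$ (obvious modification for $q=\infty$). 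If $p\le q$, then $\abs{Q_0}=1$ forces $L_q(Q_0)\hookrightarrow L_p(Q_0)$ with constant one, so the case $p=q$ applies verbatim. If $p>q$, I would invoke the Ul'yanov inequality for moduli of smoothness, $\omega_r(g,1,Q_0)_p\lesssim\bigl(\int_0^1[\xi^{-d(1/q-1/p)}\omega_r(g,\xi,Q_0)_q]^q\,\tfrac{\d\xi}{\xi}\bigr)^{1/q}$, combine it with $\inf_{\P\in\Pi_{r-1}(Q_0)}\norm{g-\P \sep L_p(Q_0)}\lesssim\omega_r(g,1,Q_0)_p$, and use $t\ge d(1/q-1/p)$---hence $\xi^{-d(1/q-1/p)}\le\xi^{-t}$ for $\xi\in(0,1)$---to bound the right-hand side by $\abs{g}_{B^t_q(L_q(Q_0))}$, recalling from \autoref{rem:Besov_prop}(i) that restricting the defining integral to $(0,1)$ yields an equivalent seminorm. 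Together with the rescaling this proves part~(ii).

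\textbf{The main obstacle} is the jump in integrability, i.e.\ the case $p>q$ in both parts: this is where one must genuinely invest a Sobolev-type estimate---the $L_q\to L_p$ boundedness of the Riesz potential of order $k$ on a bounded domain (which also underlies the compact embedding used in the non-constructive proof of part~(i)), and, for part~(ii), the Ul'yanov inequality together with the fact that the endpoint space $B^t_q(L_q)$ sits on the trace line $t=d(1/q-1/p)$. All the remaining ingredients---the affine rescaling, the Deny--Lions quotient argument, the monotonicity of $\omega_r$, and the plain embedding $L_q(Q_0)\hookrightarrow L_p(Q_0)$ when $p\le q$---are routine; the only extra care is required when $q<1$, where the classical Whitney and Ul'yanov inequalities continue to hold but in their $q<1$ versions from the literature.
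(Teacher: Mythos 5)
The paper never proves this proposition at all: it is quoted verbatim from DeVore \cite[Subsection~6.1]{Dev1998}, so there is no in-paper argument to compare against. Your reconstruction -- affine rescaling to $Q_0=[0,1]^d$ (with the correctly computed cancellation of the powers of $\abs{Q}^{1/d}$), Deny--Lions/Bramble--Hilbert or averaged Taylor polynomials for part~(i), and classical Whitney plus monotonicity of $\omega_r$ and an Ul'yanov-type estimate for part~(ii) -- is the standard route, and it is sound for part~(i) and for part~(ii) whenever $t\notin\N$. The hypothesis $k>d\max\{0,1/q-1/p\}$ is indeed exactly what makes the compactness argument run, and in the case $p>q$ your weak form of Ul'yanov's inequality (outer exponent $q$) follows from the sharp one, so that step is admissible, though heavier than necessary: the endpoint embedding $B^{d(1/q-1/p)}_q(L_q(Q_0))\hookrightarrow L_p(Q_0)$ recalled in \autoref{rem:Besov_prop}(iii), combined with Whitney's inequality in $L_p$, does the same job with less machinery. (Minor point: your compactness argument yields a constant depending on $d,p,q,k$ rather than on $k$ alone, but independence of $Q$ is all that is ever used.)

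The genuine gap is the parenthetical claim that the case $t\in\N$ ``needs a routine extra step''. It does not: with the definition of $\abs{\cdot}_{B^t_q(L_q(Q))}$ used in this paper, which for integer $t$ employs differences of order $r\geq\lfloor t\rfloor+1=t+1$, the asserted inequality over $\Pi_{\lceil t\rceil-1}(Q)=\Pi_{t-1}(Q)$ cannot be proved, because it is false: any polynomial of exact degree $t$, say $f(x)=x_1^{t}$, satisfies $\Delta^{t+1}_h f\equiv 0$ and hence has vanishing $B^t_q(L_q(Q))$-seminorm, while $\inf_{\P\in\Pi_{t-1}(Q)}\norm{f-\P \sep L_p(Q)}>0$. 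In particular, no Marchaud-type argument can lower the difference order from $t+1$ to $t$, since this necessarily produces an additive term of the form $\norm{f \sep L_q(Q)}$ which the seminorm cannot absorb -- the displayed counterexample shows any such attempt must fail. The correct formulation at integer $t$ (and the one consistent with DeVore's conventions) takes the infimum over polynomials of degree $\lfloor t\rfloor=t$, i.e.\ matches the polynomial degree to the difference order $\lceil t\rceil$ appearing in Whitney's inequality; with that reading your three cases $p=q$, $p\leq q$, $p>q$ go through verbatim. So you should either restrict your argument to $t\notin\N$ and state the integer case with $\Pi_{t}(Q)$, or note explicitly that the seminorm must then be taken with differences of order $t$ -- but you cannot leave the integer case as a ``routine'' remark.
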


In the proof of our general embedding result (\autoref{theorem:embedding}) the subsequent bound is used. 
As no explicit derivation of this quite natural assertion seems to be available in the literature, a detailed proof is added here for the reader's convenience.
\begin{Prop}\label{Prop:semi_norms}
	For $d\in\N$ let $Q$ denote some open cube in $\R^d$ with sides parallel to the coordinate axes. 
	Then for all $\ell \in \N_0$ and $0<\alpha\leq 1$ it holds
	\begin{equation*}
		\abs{g}_{B^{\ell+\alpha}_\infty(L_\infty(Q))}
		\lesssim \abs{g}_{C^{\ell,\alpha}(Q)},
	\end{equation*}
	whenever the right-hand side is finite.
\end{Prop}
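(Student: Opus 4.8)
The plan is to work directly with the intrinsic, difference-based definition of the Besov seminorm from \autoref{sec:besov_def}. First I would fix the order of differences to be $r=\ell+2$. By \autoref{rem:Besov_prop}(i) this is an admissible choice for every $0<\alpha\leq 1$ (since $\floor{\ell+\alpha}+1\leq\ell+2$), and it produces a seminorm equivalent to the one obtained with any other admissible order. With this choice it suffices to prove the pointwise bound
\[
	\abs{\Delta^r_h(g,x)}\;\lesssim\;\abs{g}_{C^{\ell,\alpha}(Q)}\,\abs{h}^{\ell+\alpha}
	\qquad\text{for all }h\in\R^d\text{ and }x\in Q_{r,h},
\]
since taking the supremum over $\abs{h}\leq t$ then yields $\omega_r(g,t,Q)_\infty\lesssim\abs{g}_{C^{\ell,\alpha}(Q)}\,t^{\ell+\alpha}$, hence $\sup_{t>0}t^{-(\ell+\alpha)}\omega_r(g,t,Q)_\infty\lesssim\abs{g}_{C^{\ell,\alpha}(Q)}$, which is exactly the claim.

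The key step is an integral representation that extracts $\ell$ derivatives from $\Delta^r_h$ and leaves only a second-order difference. Using the identity $\Delta^r_h=\Delta^\ell_h\circ\Delta^2_h$ and observing that, for $x\in Q_{r,h}$, all points $x+kh$ with $0\leq k\leq\ell+2$ lie on the segment $[x,x+rh]\subseteq Q$ — so that $\Delta^2_h(g,\cdot)$ is well-defined and of class $C^\ell$ on a neighbourhood of $[x,x+\ell h]$ — I would apply the fundamental theorem of calculus $\ell$ times. Concretely, integrating $\partial_{s_1}\cdots\partial_{s_\ell}$ of the map $s\mapsto\Delta^2_h\big(g,x+(s_1+\dots+s_\ell)h\big)$ over $[0,1]^\ell$, and using that $\partial^\nu$ commutes with $\Delta^2_h$, gives
\[
	\Delta^r_h(g,x)=\sum_{\abs{\nu}=\ell}\binom{\ell}{\nu}\,h^\nu\int_{[0,1]^\ell}\Delta^2_h\!\Big(\partial^\nu g,\;x+(s_1+\dots+s_\ell)h\Big)\,\d s,
\]
where $\binom{\ell}{\nu}$ denotes the multinomial coefficient; for $\ell=0$ this reduces to the trivial identity $\Delta^2_h(g,x)=\Delta^2_h(g,x)$.

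From here the estimate is elementary. For $\abs{\nu}=\ell$ one has $\abs{h^\nu}\leq\abs{h}^{\ell}$; each inner second-order difference is split into two consecutive first-order differences of $\partial^\nu g$, giving $\big|\Delta^2_h(\partial^\nu g,z)\big|\leq 2\,\abs{g}_{C^{\ell,\alpha}(Q)}\,\abs{h}^\alpha$ (because $\abs{\partial^\nu g}_{C^{0,\alpha}(Q)}\leq\abs{g}_{C^{\ell,\alpha}(Q)}$); the integral over $[0,1]^\ell$ contributes a factor at most $1$; and $\sum_{\abs{\nu}=\ell}\binom{\ell}{\nu}=d^\ell$. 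Combining these estimates yields $\abs{\Delta^r_h(g,x)}\leq 2\,d^\ell\,\abs{g}_{C^{\ell,\alpha}(Q)}\,\abs{h}^{\ell+\alpha}$, which is the bound above, and hence the proposition.

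The main (and really only) obstacle is the bookkeeping in the integral representation: one must check the segment inclusions so that every evaluation of $g$ takes place inside $Q$, that $\Delta^2_h(g,\cdot)$ is genuinely $C^\ell$ on a neighbourhood of $[x,x+\ell h]$, and that differences and derivatives may legitimately be interchanged at each stage. Everything else is routine. I note in passing that for $0<\alpha<1$ one could instead work with $r=\ell+1$ and peel off all $\ell$ derivatives to be left with a single first-order difference of $\partial^\nu g$, estimated directly by the H\"older condition; taking $r=\ell+2$ and keeping a second-order difference is precisely what allows the Zygmund endpoint $\alpha=1$ to be treated on the same footing.
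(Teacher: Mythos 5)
Your argument is correct in substance and rests on the same mechanism as the paper's proof: convert the $\ell$ extra orders of the iterated difference into $\ell$-th order derivatives of $g$, and estimate the remaining low-order difference of $\partial^\nu g$ directly by the H\"older condition, splitting a second difference into two first differences so that the endpoint $\alpha=1$ is covered. The paper implements this iteratively, applying the mean value theorem to $\Delta^1_h\bigl(\Delta_h^{r-1}(g,\star),\cdot\bigr)$ and peeling off one derivative and one factor of $t$ per step at the level of sup-norms, whereas you perform the whole conversion in one stroke via the integral representation $\Delta^{\ell+2}_h(g,x)=\sum_{\abs{\nu}=\ell}\frac{\ell!}{\nu!}\,h^\nu\int_{[0,1]^\ell}\Delta^2_h\bigl(\partial^\nu g,\,x+(s_1+\dots+s_\ell)h\bigr)\d s$. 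That identity is valid here: for $x\in Q_{r,h}$ all evaluation points lie on $[x,x+rh]\subset Q$, the cube is open and convex, and finiteness of $\abs{g}_{C^{\ell,\alpha}(Q)}$ presupposes $g\in C^\ell(Q)$, so the interchange of $\partial^\nu$ with $\Delta^2_h$ and the repeated use of the fundamental theorem of calculus are legitimate. Your version is arguably cleaner and yields the explicit constant $2d^\ell$.

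One justification needs repair: \autoref{rem:Besov_prop}(i) asserts equivalence of the (quasi-)\emph{norms} obtained from different admissible orders $r$, not of the \emph{semi-norms}. For $0<\alpha<1$ the semi-norm built with $r=\ell+2$ is strictly weaker than the one built with the minimal order $r=\ell+1$; for instance, a polynomial of exact degree $\ell+1$ has vanishing $(\ell+2)$-nd differences but a nonvanishing $(\ell+1)$-st difference semi-norm, so no semi-norm equivalence can hold. Hence your main estimate, which concerns $r=\ell+2$, does not by itself bound the semi-norm taken with the minimal admissible order, and the appeal to ``equivalent seminorms'' is not a valid substitute. Fortunately your closing remark already contains the correct fix: for $0<\alpha<1$ run the identical argument with $r=\ell+1$, ending with a single first-order difference of $\partial^\nu g$ estimated by the H\"older condition. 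This is exactly the case distinction the paper makes by choosing $r=\floor{\ell+\alpha}+1$, keeping a second difference only when $\alpha=1$. Promote that remark to an official part of the proof and the argument is complete.
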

\begin{proof}
\emph{Step 1.}
Assume that $\ell=0$.
Then, for $0< \alpha < 1$, the assertion follows from the definition of the involved semi-norms; see \link{def:hoelder_seminorm} and \link{def:Besov_semi} in \autoref{sec:spaces}. 
If $\alpha=1$, then we use the triangle inequality to see that for all $h\in\R^d$ it holds
\begin{equation}\label{est:Delta2}
	\norm{\Delta_h^{2}(g,\cdot) \sep L_\infty(Q_{2,h}) } 
	= \norm{\Delta^1_h(g,\cdot+h)-\Delta^1_h(g,\cdot) \sep L_\infty(Q_{2,h}) } 
	\lesssim \norm{\Delta_h^{1}(g,\cdot) \sep L_\infty(Q_{1,h}) },
\end{equation}
where we recall that for $r\in\N$ the set $Q_{r,h}$ denotes the collection of all $x\in Q$ such that $[x,x+rh]\subset Q$.
Then, as before, the claim directly follows from the definitions of the semi-norms.

\emph{Step 2.}
Now let $\ell\in\N$.
Given $t>0$, as well as $h\in\R^d$ with $0<\abs{h}\leq t$, and any function $f$ on some domain $\Omega\subset\R^d$, the mean value theorem ensures that for all $x\in \Omega_{1,h}$ there exists some $\xi_x \in [x,x+h]\subset \Omega$ with
\begin{equation*}
	\abs{\Delta_h^1(f,x)} 
	= \abs{h \cdot \nabla f(\xi_x)}
	\leq \abs{h} \, \abs{\nabla f(\xi_x)}
	\lesssim t \, \sum_{\abs{\nu}=1} \abs{\partial^\nu f(\xi_x)},
\end{equation*}
whenever the right-hand side is finite.
Obviously, the same is true also for $h=0$.
Thus, we conclude that for every such $f$ and all $\abs{h}\leq t$
\begin{equation}\label{est:Delta1}
	\norm{\Delta_h^1(f,\cdot) \sep L_\infty(\Omega_{1,h})}
	\lesssim t\, \sup_{x\in \Omega_{1,h}} \sum_{\abs{\nu}=1} \abs{\partial^\nu f(\xi_x)}
	\leq t \sum_{\abs{\nu}=1} \norm{ \partial^\nu f \sep L_\infty(\Omega)}.
\end{equation}
Observe that $r:=\floor{\ell+\alpha}+1\geq 2$ for all $0<\alpha\leq 1$.
Therefore, if we use \link{est:Delta1} for $f:=\Delta_h^{r-1}(g,\star)$ together with the linearity of $\partial^\nu$ and $\Delta_h^{r-1}$, 
\begin{align*}
	\abs{g}_{B^{\ell+\alpha}_\infty(L_\infty(Q))} 
	&= \sup_{t>0} t^{-(\ell+\alpha)} \sup_{h\in\R^d,\abs{h}\leq t} \norm{\Delta_h^1(\Delta_h^{r-1}(g,\star),\cdot) \sep L_\infty(Q_{r,h})} \\
	&\lesssim \sup_{t>0} t^{-(\ell+\alpha)} \sup_{h\in\R^d,\abs{h}\leq t} t \sum_{\abs{\nu}=1} \norm{ \partial^\nu \Delta_h^{r-1}(g,\star) \sep L_\infty(\Omega_{r-1,h})} \\
	&\leq \sum_{\abs{\nu}=1} \sup_{t>0} t^{-(\ell+\alpha)+1} \sup_{h\in\R^d,\abs{h}\leq t} \norm{ \Delta_h^{r-1}(\partial^\nu g,\cdot) \sep L_\infty(\Omega_{r-1,h})}.
\end{align*}
If necessary, we can iterate this argument and deduce
\begin{equation}\label{est:r1}
	\abs{g}_{B^{\ell+\alpha}_\infty(L_\infty(Q))} 
	\lesssim \sum_{\abs{\nu} = r-1} \sup_{t>0}  t^{-(\ell+\alpha)+r-1} \sup_{h\in\R^d,\abs{h}\leq t} \norm{ \Delta_h^{1}(\partial^\nu g,\cdot) \sep L_\infty(\Omega_{1,h})}.
\end{equation}
For $0<\alpha<1$ it is $r-1=\ell$. 
Consequently, in this case we obtain
\begin{align}
	\abs{g}_{B^{\ell+\alpha}_\infty(L_\infty(Q))} 
	&\lesssim \sum_{\abs{\nu} = \ell} \sup_{t>0} \sup_{h\in\R^d,\abs{h}\leq t} \frac{\norm{ \partial^\nu g(\cdot+h)-\partial^\nu g(\cdot) \sep L_\infty(\Omega_{1,h})}}{t^\alpha} \label{eq:finalsum} \\
	&= \sum_{\abs{\nu} = \ell} \sup_{\substack{x,y\in Q,\\ x \neq y}} \frac{\abs{ \partial^\nu g(x)-\partial^\nu g(y)}}{\abs{x-y}^\alpha}. \nonumber
\end{align}
Since the last term equals $\abs{g}_{C^{\ell,\alpha}(Q)}$, this shows the claim in the case $\alpha<1$.

Finally, we note that if $\alpha=1$, then $r\geq 3$. Thus, by means of the same (iterative) argument as above, this time we derive
\begin{align*}
	\abs{g}_{B^{\ell+\alpha}_\infty(L_\infty(Q))} 
	&\lesssim \sum_{\abs{\nu} = r-2} \sup_{t>0}  t^{-(\ell+\alpha)+r-2} \sup_{h\in\R^d,\abs{h}\leq t} \norm{ \Delta_h^{2}(\partial^\nu g,\cdot) \sep L_\infty(\Omega_{2,h})}
\end{align*}
instead of \link{est:r1}.
Using $r-2=\ell$ in conjunction with an estimate similar to \link{est:Delta2} from Step 1 this allows to conclude \link{eq:finalsum} also for this case.
Hence, the proof is complete.
\end{proof}

In \autoref{Rem:locally_weighted_hoelder_spaces}, among other things, we stated that intersections of locally weighted H\"older spaces (as introduced in \autoref{subsec:Hoelder}) with certain Besov spaces form Banach spaces w.r.t.\ the canonical maximum norm. \autoref{Prop:intersect} below is devoted to this claim. The subsequent three lemmata are used to derive a sound mathematical proof.
\begin{Prop}\label{Prop:intersect}
For $d\in\N$ let $\Omega\subset\R^d$ be a bounded Lipschitz domain and for $\ell\in\N_0$, $0<\alpha\leq 1$, as well as $\gamma>0$, let $C^{\ell,\alpha}_{\gamma,\loc}(\Omega)$ denote a locally weighted H\"older space.
Then for all $s>0$ and $1\leq p,q \leq \infty$ the space 
\begin{equation}\label{eq:intersect}
	B^s_q(L_p(\Omega)) \cap C^{\ell,\alpha}_{\gamma,\loc}(\Omega)
\end{equation}
endowed with the norm 
\begin{equation}\label{def:max_norm}
	\norm{\,\cdot\,} 
	= \max\!\left\{ \norm{\,\cdot \sep B^s_q(L_p(\Omega)}, \abs{\,\cdot\,}_{C^{\ell,\alpha}_{\gamma,\loc}} \right\}
\end{equation}
is a Banach space.
\end{Prop}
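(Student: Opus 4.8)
The plan is to first check that \link{def:max_norm} defines a norm on \link{eq:intersect} and then to establish completeness. That $\norm{\cdot}$ is a norm is immediate: it is the maximum of the norm $\norm{\cdot \sep B^s_q(L_p(\Omega))}$ and the seminorm $\abs{\cdot}_{C^{\ell,\alpha}_{\gamma,\loc}}$, so it is absolutely homogeneous and subadditive, and $\norm{g}=0$ forces $\norm{g \sep B^s_q(L_p(\Omega))}=0$, hence $g=0$. For completeness I will use two facts together with one auxiliary estimate: since $\min\{p,q\}\geq 1$, the space $B^s_q(L_p(\Omega))$ is a Banach space (see \autoref{sec:besov_def}); for every fixed $K\in\K$ the classical H\"older space $C^{\ell,\alpha}(K)$ is a Banach space (see \autoref{subsec:Hoelder}); and a Poincar\'e-type estimate on balls (provided by the lemmata below) bounds, for $h\in C^\ell(K)$ with finite H\"older seminorm, the quantity $\norm{h \sep C^\ell(K)}$ by a constant depending on $K$ times $\norm{h \sep L_p(K)}+\abs{h}_{C^{\ell,\alpha}(K)}$.

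So let $(g_n)_{n\in\N}$ be Cauchy in \link{eq:intersect} with respect to $\norm{\cdot}$. Then it is Cauchy in $B^s_q(L_p(\Omega))$, hence $g_n\to g$ there for some $g\in B^s_q(L_p(\Omega))$, and in particular $g_n\to g$ in $L_p(\Omega)$. It remains to show $g\in C^{\ell,\alpha}_{\gamma,\loc}(\Omega)$ and $\abs{g_n-g}_{C^{\ell,\alpha}_{\gamma,\loc}}\to 0$. Fix $K=B_r(x_0)\in\K$ for the moment. Since $(g_n)$ is Cauchy with respect to $\abs{\cdot}_{C^{\ell,\alpha}_{\gamma,\loc}}$ and $\delta_K$ is a fixed positive constant, the restrictions satisfy $\abs{g_n-g_m}_{C^{\ell,\alpha}(K)}\to 0$ as $n,m\to\infty$. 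As $g_n-g_m\in C^\ell(K)$ has finite H\"older seminorm, the Poincar\'e-type estimate gives
\begin{equation*}
	\norm{g_n-g_m \sep C^{\ell,\alpha}(K)}
	= \norm{g_n-g_m \sep C^\ell(K)} + \abs{g_n-g_m}_{C^{\ell,\alpha}(K)}
	\lesssim \norm{g_n-g_m \sep L_p(K)} + \abs{g_n-g_m}_{C^{\ell,\alpha}(K)},
\end{equation*}
and the right-hand side tends to $0$ as $n,m\to\infty$ since $g_n\to g$ in $L_p(K)$. Hence $(g_n|_K)$ is Cauchy in the Banach space $C^{\ell,\alpha}(K)$ and converges there to some $\widetilde g_K$. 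Because $C^{\ell,\alpha}(K)\hookrightarrow L_p(K)$ (recall $K$ is bounded) and $g_n|_K\to g|_K$ in $L_p(K)$, we conclude $g|_K=\widetilde g_K\in C^{\ell,\alpha}(K)$ and $\abs{g_n-g}_{C^{\ell,\alpha}(K)}\to 0$.

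It remains to make the last convergence uniform in $K\in\K$. Given $\varepsilon>0$, choose $N$ so that $\abs{g_n-g_m}_{C^{\ell,\alpha}_{\gamma,\loc}}\leq\varepsilon$, i.e.\ $\delta_K^\gamma\abs{g_n-g_m}_{C^{\ell,\alpha}(K)}\leq\varepsilon$ for all $K\in\K$, whenever $n,m\geq N$. Fixing $n\geq N$ and $K\in\K$ and letting $m\to\infty$, the previous paragraph yields $\delta_K^\gamma\abs{g_n-g}_{C^{\ell,\alpha}(K)}\leq\varepsilon$; taking the supremum over $K\in\K$ gives $\abs{g_n-g}_{C^{\ell,\alpha}_{\gamma,\loc}}\leq\varepsilon$ for all $n\geq N$. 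In particular $\abs{g}_{C^{\ell,\alpha}_{\gamma,\loc}}\leq\abs{g_N}_{C^{\ell,\alpha}_{\gamma,\loc}}+\varepsilon<\infty$, so $g$ belongs to \link{eq:intersect}, and $g_n\to g$ with respect to $\abs{\cdot}_{C^{\ell,\alpha}_{\gamma,\loc}}$. Together with $g_n\to g$ in $B^s_q(L_p(\Omega))$ this shows $\norm{g_n-g}\to 0$, so \link{eq:intersect} is complete.

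I expect the main obstacle to be the identification, in the third paragraph, of the $L_p$-limit $g$ with the a priori only locally defined $C^{\ell,\alpha}(K)$-limit: membership in $B^s_q(L_p(\Omega))$ does not imply any classical differentiability, so one genuinely needs the Poincar\'e-type inequality to upgrade $L_p$-convergence to $C^\ell(K)$-convergence on each fixed $K$. The fact that its constant depends on $K$ is harmless, since the uniform-in-$K$ bound is obtained afterwards by passing to the limit in an inequality that already holds uniformly in $K$, not by invoking the Poincar\'e estimate uniformly.
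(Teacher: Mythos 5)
Your proof is correct, but it takes a genuinely different route from the paper's. The delicate point in both arguments is the same: being Cauchy in the H\"older \emph{semi}-norm on a fixed $K\in\K$ does not by itself give convergence in $C^{\ell,\alpha}(K)$. The paper resolves this by subtracting Taylor polynomials: using \autoref{le:Taylor_Hold_est} it shows that $f_j-T^{\ell,x_0}[f_j]$ is Cauchy in the full norm $\norm{\cdot\sep C^{\ell,\alpha}(B)}$, and then identifies the limit with the Besov limit $f$ modulo a polynomial via distributional convergence and the closedness of $\Pi_k(\mathring{B})$ in $\D'(\mathring{B})$ (\autoref{le:uni_conv}, \autoref{le:pol_clos}). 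You instead feed the $L_p$-convergence supplied by the Besov part directly into an interpolation (``Poincar\'e-type'') inequality $\norm{h\sep C^\ell(K)}\leq C_K\bigl(\norm{h\sep L_p(K)}+\abs{h}_{C^{\ell,\alpha}(K)}\bigr)$, which upgrades semi-norm-Cauchy plus $L_p$-Cauchy to Cauchy in $\norm{\cdot\sep C^{\ell,\alpha}(K)}$ in one stroke; the limit is then identified inside $L_p(K)$ and no distributional argument is needed. Your final uniformity-in-$K$ step (passing to the limit $m\to\infty$ in $\delta_K^\gamma\abs{g_n-g_m}_{C^{\ell,\alpha}(K)}\leq\varepsilon$ and taking the supremum) coincides with the paper's Step 3. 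What each approach buys: the paper's identification lemma works in any ambient space continuously embedded into $\D'$, i.e.\ it never uses that the lower-order norm is an $L_p$-norm; your version is shorter and more elementary, but relies on the specific $L_p$-structure of the Besov norm.

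The one item you must actually supply is the Poincar\'e-type inequality itself: you invoke it as ``provided by the lemmata below'', but no such lemma is proved in your text. It is true, and a short proof is available along the very lines of \autoref{le:Taylor_Hold_est}: with $T=T^{\ell,x_0}[h]$ the Taylor polynomial at the center of $K$, the identity $\partial^{\nu}(h-T)=\partial^{\nu}h-T^{\ell-|\nu|,x_0}[\partial^{\nu}h]$ together with \autoref{le:Taylor_Hold_est} gives $\norm{h-T\sep C^\ell(K)}\lesssim\abs{h}_{C^{\ell,\alpha}(K)}$; moreover $\norm{T\sep L_p(K)}\leq\norm{h\sep L_p(K)}+\norm{h-T\sep L_p(K)}\lesssim\norm{h\sep L_p(K)}+\abs{h}_{C^{\ell,\alpha}(K)}$, and since $\Pi_\ell(K)$ is finite-dimensional, all norms on it are equivalent, so $\norm{T\sep C^\ell(K)}\leq C_K\norm{T\sep L_p(K)}$. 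Adding the two bounds yields the inequality (alternatively, a compactness argument via Arzel\`a--Ascoli works). With this lemma added, your argument is complete and correct.
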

\begin{proof}
Since $\norm{\,\cdot \sep B^s_q(L_p(\Omega)}$ is a norm on $B^s_q(L_p(\Omega))$ and $\abs{\,\cdot\,}_{C^{\ell,\alpha}_{\gamma,\loc}}$ defines a semi-norm for $C^{\ell,\alpha}_{\gamma,\loc}(\Omega)$, it obviously holds that $\norm{\,\cdot\,}$ is a norm for the space \link{eq:intersect}.
To show completeness, let $\{f_j\}_{j \in \N_0}$ be a Cauchy sequence in \link{eq:intersect} with respect to $\norm{\,\cdot\,}$. 
Then, by completeness of the Besov space, there exists some $f \in B^s_q(L_p(\Omega))$ such that 
\begin{align}
\label{eq:Besov_conv}
	f_j \rightarrow f \quad \text{in} \quad B^s_q(L_p(\Omega)), \quad \text{as } j \rightarrow \infty.
\end{align}
This clearly remains true for all restrictions of $f_j$ and $f$, respectively, e.g., when $\Omega$ is replaced by an open ball $\mathring{B} \subset \Omega$. 

In the following, we will show that  $f_j$ converges to $f$ with respect to $\abs{\,\cdot\,}_{C^{\ell,\alpha}_{\gamma,\loc}}$, too. 
Let $B=B_r(x_0) \subset \Omega$ be a non-empty closed ball such that $B_{c\,r}(x_0)$ is still contained in $\Omega$ for some $c>1$. Given some function $g \in C^{\ell}(B)$ we denote by $T^{\ell,x_0}[g]$ its Taylor polynomial of degree $\ell$ at $x_0$, i.e.,
\begin{align*}
T^{\ell,x_0}[g](x)=\sum_{|\nu|\leq \ell} \frac{\partial^{\nu}g(x_0)}{\nu!}(x-x_0)^\nu, \quad x \in B.
\end{align*}

\emph{Step 1.} 
Here we prove that, if $\{f_j\}_{j \in \N_0}$ is a Cauchy sequence w.r.t.\ $\abs{\cdot}_{C^{\ell,\alpha}(B)}$, then 
\begin{align}\label{eq:difference_seq}
	\{f_j-T^{\ell,x_0}[f_j]\}_{j \in \N_0}
\end{align}
forms a Cauchy sequence with respect to the norm in the H\"older space $C^{\ell,\alpha}(B)$,
\begin{align*}
\norm{\cdot \sep C^{\ell,\alpha}(B)} = \norm{\cdot \sep C^\ell(B)} + \abs{\cdot}_{C^{\ell,\alpha}(B)}.
\end{align*}
Since the definition of the semi-norm $\abs{\cdot}_{C^{\ell,\alpha}(B)}$ given in \link{def:hoelder_seminorm} is based on derivatives of degree~$\ell$, we have
\begin{align}
\label{eq:Taylor_Hoelder}
\abs{f_j-T^{\ell,x_0}[f_j]}_{C^{\ell,\alpha}(B)}=\abs{f_j}_{C^{\ell,\alpha}(B)}.
\end{align}
Therefore it remains to show that \link{eq:difference_seq} is a Cauchy sequence with respect to the norm $\norm{\cdot \sep C^\ell(B)}$. 
For $j,k\in\N_0$ let $g_{j,k}=f_j-f_k$ and choose $\nu \in \N_0^{d}$ with $|\nu| \leq \ell$. 
Then, by linearity of the Taylor polynomial, for all $x\in B$ it holds
\begin{align}\label{eq:Taylor_est}
\begin{split}
	\partial^{\nu} \!\left( \left(f_j-T^{\ell,x_0}[f_j]\right)-\left(f_k-T^{\ell,x_0}[f_k]\right)\right)\!(x) 
	&= \partial^{\nu} \!\left(g_{j,k}- T^{\ell,x_0}[g_{j,k}]\right)\!(x) \\
	&=\partial^{\nu} g_{j,k} (x) - T^{\ell-|\nu|,x_0}[\partial^{\nu} g_{j,k}](x). 
\end{split}
\end{align} 
According to \autoref{le:Taylor_Hold_est} below, we thus have
\begin{align*}
	\sup_{x \in B} \abs{\partial^{\nu} g_{j,k} (x) - T^{\ell-|\nu|,x_0}[\partial^{\nu} g_{j,k}](x)} 
	\lesssim \abs{\partial^{\nu}g_{j,k}}_{C^{\ell-\abs{\nu},\alpha}(B)} \leq \abs{f_j-f_k}_{C^{\ell,\alpha}(B)}
\end{align*}
for all $|\nu|\leq \ell$. 
Together with \eqref{eq:Taylor_est} this shows that 
\begin{equation*}
	\norm{\left(f_j-T^{\ell,x_0}[f_j]\right)-\left(f_k-T^{\ell,x_0}[f_k]\right) \sep C^\ell(B)} 
	\lesssim \abs{f_j-f_k}_{C^{\ell,\alpha}(B)},
\end{equation*}
i.e., \link{eq:difference_seq} forms a Cauchy sequence w.r.t.\ $\norm{\cdot \sep C^\ell(B)}$. 
This observation in conjunction with \eqref{eq:Taylor_Hoelder} finally proves that $\{f_j-T^{\ell,x_0}[f_j]\}_{j \in \N_0}$ is a Cauchy sequence in the norm of the H\"older space $C^{\ell,\alpha}(B)$, too.

\emph{Step 2.} Of course, the space $C^{\ell,\alpha}(B)$ endowed with the norm $\norm{\cdot \sep C^{\ell,\alpha}(B)}$ is complete. 
Since we have shown that $\{f_j-T^{\ell,x_0}[f_j]\}_{j \in \N_0}$ is a Cauchy sequence with respect to this norm, there exists some $f_B \in C^{\ell,\alpha}(B)$ such that 
\begin{align*}
	\left( f_j-T^{\ell,x_0}[f_j] \right) \rightarrow f_B \quad \text{in} \quad \norm{\cdot \sep C^{\ell,\alpha}(B)}, \quad  \text{ as } j \rightarrow \infty.
\end{align*}

\emph{Step 3.} 
In the previous steps it was proven that every Cauchy sequence $\{f_j\}_{j \in \N_0}$ in $B^s_q(L_p(\Omega))\cap C^{\ell,\alpha}_{\gamma,\loc}(\Omega)$ (w.r.t.\ $\norm{\cdot}$) converges to some $f$ in $B^s_q(L_p(\Omega))$ and that for every non-empty closed ball $B=B_r(x_0) \subset \R^d$ for which $B_{c\,r}(x_0)$ is still contained in $\Omega$ the sequence $\{f_j-T^{\ell,x_0}[f_j] \}_{j \in \N_0}$ restricted to $B$ converges to some $f_B$ with respect to $\norm{\cdot \sep C^{\ell,\alpha}(B)} $. 
It remains to show that $f_j \rightarrow f$ in the semi-norm of $C^{\ell,\alpha}_{\gamma,\loc}(\Omega)$. Let $\mathring{B}$ be the interior of $B$. 
\autoref{le:uni_conv}, applied to $X=B^s_q(L_p(\mathring{B}))$, implies that the restriction of $f$ to $B$ belongs to $C^{\ell,\alpha}(B)$ and that  
\begin{align*}
f_j \rightarrow f \quad \text{with respect to} \quad \abs{\cdot}_{C^{\ell,\alpha}(B)}, \quad \text{as } j \rightarrow \infty.
\end{align*}
Since clearly, for all $j\in\N_0$ and every $B$, it holds
\begin{align*}
	\abs{f_j-f}_{C^{\ell,\alpha}(B)} \leq \lim_{k\rightarrow \infty} \abs{f_j-f_k}_{C^{\ell,\alpha}(B)},
\end{align*}
the definition of $\abs{\cdot}_{C^{\ell,\alpha}_{\gamma,\loc}(\Omega)}$ as a weighted supremum of ${C^{\ell,\alpha}(B)}$-semi-norms yields
\begin{align*}
\abs{f_j-f}_{C^{\ell,\alpha}_{\gamma,\loc}(\Omega)} \leq \lim_{k\rightarrow \infty} \abs{f_j-f_k}_{C^{\ell,\alpha}_{\gamma,\loc}(\Omega)}.
\end{align*}
Hence, from the assumption that $\{f_j\}_{j \in \N_0}$ is a Cauchy sequence in $C^{\ell,\alpha}_{\gamma,\loc}(\Omega)$ and by \eqref{eq:Besov_conv} it follows that
\begin{align*}
f_j \rightarrow f \quad \text{in} \quad B^s_q(L_p(\Omega)) \cap C^{\ell,\alpha}_{\gamma,\loc}(\Omega), \quad \text{as } j \rightarrow \infty,
\end{align*}
and thus the proof is finished.
\end{proof}

\begin{Rem}
Let $s>0$. If $0< p <1$ or $0 < q < 1$, then $B^s_q(L_p(\Omega))$ is only a quasi-Banach space, i.e., it is complete with respect to the \emph{quasi}-norm $\norm{\,\cdot \sep B^s_q(L_p(\Omega)}$. However, in the same way as in \autoref{Prop:intersect}, one can show that in this case the intersection \link{eq:intersect} endowed with the quasi-norm $\norm{\cdot}$ given by \link{def:max_norm} defines a quasi-Banach space.
\end{Rem}

\begin{lemma}\label{le:Taylor_Hold_est}
	Let $B \subset \R^d$, $d\in\N$, denote a non-trivial closed ball with center $x_0$ and let $\ell \in \N_0$. 
	For $g \in C^{\ell,\alpha}(B)$ let $T^{\ell,x_0}[g]$ be the Taylor polynomial of degree $\ell$ at $x_0$. 
	Then there exists a constant $C_{\ell,\alpha,B}>0$ such that
	\begin{align*}
		\sup_{x \in B} \abs{g (x) - T^{\ell,x_0}[g](x)} \leq C_{\ell,\alpha,B} \cdot \abs{g}_{C^{\ell,\alpha}(B)}
		\qquad \text{for all} \qquad g \in C^{\ell,\alpha}(B).
	\end{align*}
	\end{lemma}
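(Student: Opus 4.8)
The plan is to obtain this from Taylor's theorem with integral remainder, written in a form in which only the $\ell$-th order derivatives of $g$ occur, since these are precisely the derivatives controlled by $\abs{\cdot}_{C^{\ell,\alpha}(B)}$. First I would dispose of the trivial case $\ell=0$: here $T^{0,x_0}[g]\equiv g(x_0)$, and \link{def:hoelder_seminorm} immediately gives $\abs{g(x)-g(x_0)}\le\abs{g}_{C^{0,\alpha}(B)}\abs{x-x_0}^\alpha\le r^\alpha\abs{g}_{C^{0,\alpha}(B)}$ for all $x\in B$, where $r>0$ denotes the radius of $B$ (whose center is $x_0$); so $C_{0,\alpha,B}=r^\alpha$ does the job.

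For $\ell\ge 1$ I would fix $g\in C^{\ell,\alpha}(B)$ and $x\in B$. As $B$ is convex, the segment $[x_0,x]$ lies in $B$, so $\varphi(t):=g\bigl(x_0+t(x-x_0)\bigr)$ is $C^\ell$ on $[0,1]$; Taylor's formula with integral remainder applied to $\varphi$ at $0$, together with the chain rule, yields after collecting terms
\[
	g(x)-T^{\ell,x_0}[g](x)
	=\ell\sum_{\abs{\nu}=\ell}\frac{(x-x_0)^\nu}{\nu!}\int_0^1(1-t)^{\ell-1}\bigl[\partial^\nu g\bigl(x_0+t(x-x_0)\bigr)-\partial^\nu g(x_0)\bigr]\d t ,
\]
where the subtraction of $\partial^\nu g(x_0)$ inside the integral is legitimate because $\ell\int_0^1(1-t)^{\ell-1}\d t=1$, so the added quantity is exactly the degree-$\ell$ part of $T^{\ell,x_0}[g](x)$ (the lower-order part of the Taylor polynomial being reproduced verbatim by the non-remainder terms). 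Now \link{def:hoelder_seminorm} bounds each difference in the integrand by $\abs{g}_{C^{\ell,\alpha}(B)}\,(t\abs{x-x_0})^\alpha$, and combining this with $\abs{(x-x_0)^\nu}\le\abs{x-x_0}^{\ell}\le r^\ell$ and the finiteness of $\int_0^1(1-t)^{\ell-1}t^\alpha\d t$ gives $\sup_{x\in B}\abs{g(x)-T^{\ell,x_0}[g](x)}\lesssim r^{\ell+\alpha}\abs{g}_{C^{\ell,\alpha}(B)}$ with an implied constant depending only on $\ell$ and $\alpha$; absorbing $r^{\ell+\alpha}$ into the constant completes the proof.

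I do not expect a genuine obstacle here: the statement is classical. The only point needing a little care is the bookkeeping in the displayed identity — writing the Taylor remainder in its ``top-order'' integral form so that exactly the $\ell$-th derivatives appear, and matching the degree-$\ell$ Taylor coefficient against $\ell\int_0^1(1-t)^{\ell-1}\d t=1$ in order to replace $\partial^\nu g$ by the difference $\partial^\nu g(\cdot)-\partial^\nu g(x_0)$. An alternative route would be induction on $\ell$ using the identity $\partial^\nu\bigl(g-T^{\ell,x_0}[g]\bigr)=\partial^\nu g-T^{\ell-\abs{\nu},x_0}[\partial^\nu g]$ (already used in the proof of \autoref{Prop:intersect}) together with the mean value theorem for the base case $\ell=1$, but the integral-remainder computation above is the most direct.
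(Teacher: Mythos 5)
Your proof is correct and takes essentially the same route as the paper's: expand $g$ by Taylor's theorem to order $\ell-1$, absorb the degree-$\ell$ part of $T^{\ell,x_0}[g]$ into a difference of $\ell$-th order derivatives at two points of $B$, and bound that difference by $\abs{g}_{C^{\ell,\alpha}(B)}$. The only (immaterial) difference is that you use the integral form of the remainder, normalized via $\ell\int_0^1(1-t)^{\ell-1}\d t=1$, whereas the paper uses the Lagrange (mean-value) form with a single $\theta\in(0,1)$.
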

\begin{proof}
Let $\ell\in\N$. Then, by Taylor's theorem for order $\ell-1$, for all $x \in B$ there exists a $\theta \in (0,1)$ such that
\begin{align*}
g(x)-T^{\ell,x_0}[g](x) &=g(x)-T^{\ell-1,x_0}[g](x)- \sum_{|\nu|= \ell} \frac{\partial^{\nu}g(x_0)}{\nu!}(x-x_0)^\nu \\
&=\sum_{|\nu|= \ell} \frac{\partial^{\nu}g(x_0+\theta(x-x_0))}{\nu!}(x-x_0)^\nu - \sum_{|\nu|= \ell} \frac{\partial^{\nu}g(x_0)}{\nu!}(x-x_0)^\nu \\
\end{align*}
Now, estimating the right-hand side with the help of $\abs{g}_{C^{\ell,\alpha}(B)}$ results in
\begin{align*}
	\abs{g(x)-T^{\ell,x_0}[g](x)} 
	&\leq \sum_{|\nu|= \ell} \frac{\abs{\partial^{\nu}g(x_0+\theta(x-x_0))-\partial^{\nu}g(x_0)}}{\abs{(x_0+\theta(x-x_0))-x_0}^\alpha} \, \frac{\theta^\alpha \abs{x-x_0}^{\abs{\nu}+\alpha}}{\nu!} \\
&\leq C_{\ell,\alpha,B} \abs{g}_{C^{\ell,\alpha}(B)} 
\end{align*}
for all $x \in B\setminus \{x_0\}$ and $\ell\in\N$. Since this bound obviously holds for $x=x_0$ and for $\ell=0$ as well, the claim is proven.
\end{proof}

\begin{lemma}\label{le:uni_conv}
	Let $B \subset \R^d$ be a non-trivial closed ball and denote its interior by $\mathring{B}$ . 
	Moreover, for $k,\ell \in \N_0$ with $k\leq \ell$, let $\{\P_j^{k}\}_{j\in\N_0} \subset \Pi_k(B)$ be a sequence of polynomials and suppose that $X(\mathring{B})$ denotes a quasi-Banach space of functions on $\mathring{B}$, which is continuously embedded into $\D'(\mathring{B})$. 
	Finally, assume that
	\begin{align*}
		(f_j - \P_j^{k}) \rightarrow f^1 \quad \text{with respect to} \quad \norm{\cdot \sep C^{\ell,\alpha}(B)} 
		\qquad \text{and} \qquad 
		f_j	\rightarrow f \quad \text{in} \quad  X(\mathring{B}),
	\end{align*}
	as $j$ approaches infinity.
	Then $f \in C^{\ell,\alpha}(B)$ and 
	\begin{align*}
		f_j \rightarrow f \quad \text{with respect to} \quad \abs{\cdot}_{C^{\ell,\alpha}(B)}, \quad \text{as } j \rightarrow \infty. 
	\end{align*}
\end{lemma}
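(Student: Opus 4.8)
The plan is to split $f_j = (f_j-\P_j^k)+\P_j^k$ and to show that the polynomial part $\P_j^k$ also converges, in a distributional sense, to a polynomial of degree at most $\ell$; since any such polynomial has vanishing $\ell$-th order H\"older semi-norm, adding it back does not disturb convergence in $|\cdot|_{C^{\ell,\alpha}(B)}$.

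First I would move everything into $\D'(\mathring B)$. By hypothesis $X(\mathring B)\hookrightarrow \D'(\mathring B)$, so $f_j\to f$ in $\D'(\mathring B)$; and since convergence with respect to $\norm{\,\cdot\sep C^{\ell,\alpha}(B)}$ implies uniform convergence on $B$, the restriction of $f_j-\P_j^k$ to $\mathring B$ converges to $f^1$ in $\D'(\mathring B)$ as well. Subtracting yields $\P_j^k\to f-f^1=:P$ in $\D'(\mathring B)$.

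The key step is to identify $P$. The polynomials $\P_j^k$ all lie in the image of the finite-dimensional space $\Pi_k(B)$ under the restriction map into $\D'(\mathring B)$, which is injective since a polynomial vanishing as a distribution on the non-empty open set $\mathring B$ vanishes identically. A finite-dimensional subspace of the Hausdorff topological vector space $\D'(\mathring B)$ is closed, hence $P$ belongs to that image; that is, $P$ agrees on $\mathring B$ with a polynomial of degree $\le k\le\ell$, which I again denote by $P$ and regard as an element of $C^{\ell,\alpha}(B)$. Uniqueness of limits in $\D'(\mathring B)$ then gives $f=f^1+P$ on $\mathring B$; identifying $f$ with this continuous representative shows $f\in C^{\ell,\alpha}(B)$.

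Finally, the passage to the semi-norm is immediate: whenever a polynomial $\P$ has degree $\le\ell$, its $\ell$-th order derivatives are constant, so $|g-\P|_{C^{\ell,\alpha}(B)}=|g|_{C^{\ell,\alpha}(B)}$. Applying this with $\P=P$ and with $\P=\P_j^k$ yields
\[
	|f_j-f|_{C^{\ell,\alpha}(B)}
	=|(f_j-\P_j^k)-f^1|_{C^{\ell,\alpha}(B)}
	\le\norm{(f_j-\P_j^k)-f^1\sep C^{\ell,\alpha}(B)}\longrightarrow 0
\]
as $j\to\infty$, which is the assertion. I expect the main obstacle to be the middle step: carefully justifying that a distributional limit of polynomials of uniformly bounded degree is again such a polynomial (via finite-dimensionality and the Hausdorff property of $\D'(\mathring B)$) and that this, together with uniqueness of $\D'$-limits, legitimately identifies $f$ with the $C^{\ell,\alpha}(B)$-function $f^1+P$.
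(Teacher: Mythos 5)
Your argument is correct, and it follows the same overall strategy as the paper: pass to $\D'(\mathring{B})$, conclude $\P_j^k \rightarrow f-f^1$ there, identify this limit as a polynomial of degree at most $k\leq\ell$, and then use that $\abs{\cdot}_{C^{\ell,\alpha}(B)}$ cannot distinguish functions differing by polynomials of degree at most $\ell$ to reduce the claimed convergence to the assumed convergence of $f_j-\P_j^k$ to $f^1$. The one place where you genuinely diverge is the identification step. The paper isolates this as a separate lemma (\autoref{le:pol_clos}) and proves it by an elementary induction on $k$: the top-order derivatives $\partial^\nu\P_j^k$, $\abs{\nu}=k$, are constants, their distributional convergence forces convergence of these constants in $\R$, and one peels off the leading terms. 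You instead invoke the soft functional-analytic fact that a finite-dimensional linear subspace of a Hausdorff topological vector space is closed, applied to the (injective) image of $\Pi_k(B)$ in $\D'(\mathring{B})$; this is a perfectly valid shortcut, provided one accepts that $\D'(\mathring{B})$ is a Hausdorff topological vector space and the closed-subspace theorem for it. Your route is shorter and more general (it works verbatim for any finite-dimensional family of functions, not just polynomials), while the paper's route is self-contained and completely elementary, needing nothing beyond testing against functions in $\D(\mathring{B})$ and Bolzano--Weierstrass; it also makes explicit \emph{why} the limit is again a polynomial of the same degree. Your concluding estimate, namely
\begin{equation*}
	\abs{f_j-f}_{C^{\ell,\alpha}(B)}
	= \abs{\left(f_j-\P_j^k\right)-f^1}_{C^{\ell,\alpha}(B)}
	\leq \norm{\left(f_j-\P_j^k\right)-f^1 \sep C^{\ell,\alpha}(B)} \rightarrow 0,
\end{equation*}
is exactly the paper's final step, using that both $\P_j^k$ and the limit polynomial have degree at most $\ell$ and hence constant $\ell$-th order derivatives.
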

\begin{proof}
Since both the spaces $C^{\ell,\alpha}(\mathring{B})$ and $X(\mathring{B})$ are continuously embedded into ${\cal D}'(\mathring{B})$, the convergence
\begin{align*}
	(f_j - \P_j^{k}) \rightarrow f^1 
	\quad \text{and} \quad 
	f_j \rightarrow f
\end{align*}
takes place in $\D'(\mathring{B})$.
Hence, $\P_j^{k} \rightarrow (f-f^1) \in {\cal D}'(\mathring{B})$, as $j\rightarrow\infty$.

On the other hand, the linear space $\Pi_k(\mathring{B})$ of polynomials of degree not larger than $k$ is closed with respect to the convergence (cf.\ \autoref{le:pol_clos} below) in ${\cal D}'(\mathring{B})$. 
Consequently, $f-f^1=:\P^k\in \Pi_k(B)$ and
\begin{align*}
	f=f^1+\P^k \in C^{\ell,\alpha}(B).
\end{align*}
Finally, as $\abs{\cdot}_{C^{\ell,\alpha}(B)}$ can not distinguish polynomials of degree less or equal to $\ell$,
\begin{align*}
	\abs{f_j-f}_{C^{\ell,\alpha}(B)}
	=\abs{\left(f_j - \P_j^{k}\right) - \left(f-\P^k\right)}_{C^{\ell,\alpha}(B)} 
	= \abs{\left(f_j - \P_j^{k}\right) - f^1}_{C^{\ell,\alpha}(B)} 
	\rightarrow 0,  \quad \text{as } j \rightarrow \infty,
\end{align*}
due to our assumption.
\end{proof}

\begin{lemma} \label{le:pol_clos}
	Let $\mathring{B}$ denote an open ball in $\R^d$, $d\in\N$.
	Then the set of polynomials $\Pi_k(\mathring{B})$ of degree at most $k\in\N_0$ on $\mathring{B}$ is closed with respect to convergence in ${\cal D}'(\mathring{B})$.
\end{lemma}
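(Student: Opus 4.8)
The plan is to reduce the statement to the elementary observation that a polynomial of degree at most $k$ is determined, as a distribution, by finitely many of its test-function pairings, so that distributional convergence of such polynomials forces convergence of their coefficients.

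First I would fix a sequence $\{\P_j\}_{j\in\N_0}\subset\Pi_k(\mathring B)$ with $\P_j\to T$ in $\D'(\mathring B)$ and write $\P_j=\sum_{|\nu|\le k}c_{j,\nu}\,(\cdot)^\nu$ with real coefficients $c_{j,\nu}$. The core step is to produce a family of test functions $\varphi_\mu\in\D(\mathring B)$, indexed by $\mu\in\N_0^d$ with $|\mu|\le k$, which is biorthogonal to the monomials, i.e.\ $\distr{x^\nu}{\varphi_\mu}=\delta_{\nu,\mu}$ for all $|\nu|,|\mu|\le k$. This is pure finite-dimensional linear algebra: the monomials $\{x^\nu : |\nu|\le k\}$ are linearly independent elements of $L_1^{\loc}(\mathring B)\subset\D'(\mathring B)$ (a nonzero polynomial does not vanish a.e.\ on the nonempty open set $\mathring B$), hence the bilinear pairing $\Pi_k(\mathring B)\times\D(\mathring B)\to\R$ is non-degenerate in its first slot; therefore the induced linear map $\D(\mathring B)\to\Pi_k(\mathring B)^{*}$ is surjective onto the finite-dimensional algebraic dual, which yields the desired dual family $\{\varphi_\mu\}$.

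Given the $\varphi_\mu$, testing against them gives $c_{j,\mu}=\distr{\P_j}{\varphi_\mu}\to\distr{T}{\varphi_\mu}=:c_\mu$ for every $\mu$ with $|\mu|\le k$, so all coefficients converge. Setting $\P:=\sum_{|\mu|\le k}c_\mu\,(\cdot)^\mu\in\Pi_k(\mathring B)$, for an arbitrary $\psi\in\D(\mathring B)$ one has $\distr{\P_j}{\psi}=\sum_{|\nu|\le k}c_{j,\nu}\distr{x^\nu}{\psi}\to\sum_{|\nu|\le k}c_\nu\distr{x^\nu}{\psi}=\distr{\P}{\psi}$, i.e.\ $\P_j\to\P$ in $\D'(\mathring B)$. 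Since $\D'(\mathring B)$ is Hausdorff, limits are unique, so $T=\P\in\Pi_k(\mathring B)$, which is the claim.

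I do not expect a serious obstacle here; the only point requiring a moment's thought is the construction of the biorthogonal test functions, and even that is standard once one observes that the distributional pairing restricted to $\Pi_k(\mathring B)$ is non-degenerate (a polynomial that annihilates every test function vanishes a.e., hence identically). Alternatively one could argue more quickly that $\Pi_k(\mathring B)$ is a finite-dimensional linear subspace of the Hausdorff locally convex space $\D'(\mathring B)$ and hence closed, or use continuity of distributional differentiation to get $\partial^\nu T=\lim_j\partial^\nu\P_j=0$ for all $|\nu|=k+1$ and then invoke that a distribution on a connected open set with vanishing derivatives of order $k+1$ must be a polynomial of degree at most $k$; I would keep the coefficient-convergence argument as the main proof since it is the most self-contained.
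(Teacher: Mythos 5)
Your proof is correct, but it takes a genuinely different route from the paper's. The paper argues by induction on the degree $k$: for $k=0$ the limit of a sequence of constants is identified directly, and in the inductive step one differentiates, observing that $\partial^{\nu}\P_j^k$ is a constant for $\abs{\nu}=k$, so the base case gives convergence of the top-order coefficients, after which the remainders $\P_j^k-\sum_{\abs{\nu}=k}\frac{\partial^{\nu}\P_j^k}{\nu!}x^{\nu}$ lie in $\Pi_{k-1}(\mathring{B})$ and the induction hypothesis applies. You instead build a biorthogonal family $\{\varphi_\mu\}\subset\D(\mathring{B})$ for the monomials; this is legitimate, since the pairing restricted to the finite-dimensional space $\Pi_k(\mathring{B})$ is non-degenerate (a nonzero polynomial cannot vanish a.e.\ on a nonempty open set), and a point-separating subspace of the dual of a finite-dimensional space is the whole dual. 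Testing against the $\varphi_\mu$ then yields convergence of all coefficients at once, and uniqueness of distributional limits identifies the limit as a polynomial of degree at most $k$. What each approach buys: yours avoids induction and makes the coefficient convergence explicit in one step, at the price of the small linear-algebra construction; the paper's proof needs no duality argument and uses only the sequential continuity of $\partial^{\nu}$ on $\D'(\mathring{B})$ plus the trivial degree-zero case. Your two parenthetical alternatives are also sound, and the differentiation-based one is essentially the paper's argument in compressed form, modulo the quoted fact that a distribution with vanishing derivatives of order $k+1$ on a connected open set is a polynomial of degree at most $k$, which the paper's induction in effect reproves.
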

\begin{proof}
For all $\{\P_j^k\}_{j \in \N_0} \subset \Pi_k(\mathring{B})$ with
\begin{align*}
	\P_j^k \rightarrow \P \in {\cal D}'(\mathring{B}), \quad \text{as } j \rightarrow \infty,
\end{align*}
we have to show that $\P \in \Pi_k(\mathring{B})$. 
We shall prove this statement by induction on $k \in \N_0$. 
Let $k=0$. Then $\P_j^0 \equiv a_j \in \R$ is a sequence of constants converging to $\P \in {\cal D}'(\mathring{B})$, i.e.,
\begin{align*}
	a_j \int_B \varphi(x) \d x 
	= \int_B \P_j^0(x)\, \varphi(x) \d x \rightarrow \P(\varphi) 
	\quad \text{ for all } 
	\quad \varphi \in \D(B), \quad \text{as } j \rightarrow \infty.
\end{align*}
Obviously, the sequence $\{a_j\}_{j \in \N_0}$ has to be bounded in $\R$ and hence there is a subsequence $\{a_{j_{\ell}}\}_{\ell \in \N_0}$ with $a_{j_{\ell}} \rightarrow a \in \R$, as $\ell \rightarrow \infty$. 
By uniqueness of convergence of this subsequence it holds
\begin{align*}
	\P(\varphi) 
	= a \int_B \varphi(x) \d x 
\end{align*}
and thus $\P\equiv a \in \Pi_0(\mathring{B})$.

Let us now assume that $k \in \N$ and that the statement of the lemma is already shown for $0\leq \ell\leq k-1$. 
In addition, let $\nu \in \N_0^d$ with $|\nu|=k$ be a given multi-index. 
If $\P_j^k \rightarrow \P$ in ${\cal D}'(\mathring{B})$, then also $\partial^{\nu} \P_j^k \rightarrow \partial^{\nu}\P$ in ${\cal D}'(\mathring{B})$, as $j \rightarrow \infty$. 
But, for all $j\in\N_0$, $\partial^{\nu} \P_j^k \equiv a_j^{\nu} \in \R$ is a polynomial of degree $0$. Hence, by the base step of the induction, the sequence $\{\partial^{\nu} \P_j^k\}_{j\in\N_0}$ converges to some constant $a^{\nu}$ in ${\cal D}'(\mathring{B})$. This shows that 
\begin{align*}
	\P_j^{k-1}:= \P_j^k - \sum_{|\nu|=k} \frac{\partial^{\nu} \P_j^k}{\nu!}x^{\nu} 
	\quad \text{tends to} \quad
	\tilde{\P} := \P- \sum_{|\nu|=k} \frac{a^{\nu}}{\nu!}x^{\nu}
	\quad \text{in} \quad {\cal D}'(\mathring{B}), \quad \text{as } j \rightarrow \infty.
\end{align*}
Since $\P_j^{k-1}$ belongs to $\Pi_{k-1}(\mathring{B})$, by induction it follows that $\tilde{\P}\in\Pi_{k-1}(\mathring{B})$, too.
Therefore, $\P$ belongs to $\Pi_{k}(\mathring{B})$ and the proof is complete.
\end{proof}

\bibliographystyle{is-abbrv}
\bibliography{AG_Numerik}
%
\newpage
\section*{}
\noindent \textsc{Stephan~Dahlke, Christoph~Hartmann, and Markus~Weimar} \\
Philipps-University Marburg \\
Faculty of Mathematics and Computer Science, Workgroup Numerics and Optimization \\
Hans-Meerwein-Stra{\ss}e, Lahnberge \\
35032 Marburg, Germany \\
\textsl{E-mail}: \{dahlke, hartmann, weimar\}@mathematik.uni-marburg.de \\[2ex]
\textsc{Lars Diening} \\
Ludwig Maximilian University Munich \\
Institute of Mathematics \\
Theresienstra{\ss}e 39 \\
80333 Munich, Germany \\
\textsl{E-mail}: diening@mathematik.uni-muenchen.de \\[2ex]
\textsc{Benjamin Scharf} \\
TU Munich \\
Faculty of Mathematics \\
Boltzmannstra{\ss}e 3 \\
85748 Garching (Munich), Germany \\
\textsl{E-mail}: scharf@ma.tum.de \\[2ex]
\end{document}